\documentclass[reqno,11pt]{amsart}
\usepackage{amssymb}
\usepackage{latexsym,array}
\usepackage{amsfonts}
\textwidth 14cm
\newcommand{\C}{\mathbb {C}}

\newcommand{\cZ}{{\mathcal Z}}

\newcommand{\bl}{\boldsymbol{e_\ell}}
\newcommand{\br}{\boldsymbol{e_r}}
\newcommand{\bp}{\boldsymbol{\pi}}

\newcommand{\cX}{\mathcal {X}}

\newcommand{\bH}{\mathbb{H}}
\newcommand{\R}{\mathbb{R}}

\newtheorem{Pa}{Paper}[section]
\newtheorem{theorem}[Pa]{{\bf Theorem}}

\newtheorem{lemma}[Pa]{{\bf Lemma}}
\newtheorem{definition}[Pa]{{\bf Definition}}
\newtheorem{corollary}[Pa]{{\bf Corollary}}

\newtheorem{remark}[Pa]{{\bf Remark}}

\newtheorem{example}[Pa]{{\bf Example}}

\author[V. Bolotnikov]{Vladimir Bolotnikov}
\address{Williamsburg}
\title[Polynomial interpolation]{Polynomial interpolation  
over quaternions}
\begin{document}
\begin{abstract}
Interpolation theory for complex polynomials is well understood.
In the non-commutative quaternionic setting, the polynomials can
be evaluated ``on the left" and ``on the right". If the interpolation problem
involves interpolation conditions of the same (left or right) type, the results
are very much similar to the complex case: a consistent problem has a unique solution
of a low degree (less than the number of interpolation conditions imposed), and the
solution set of the homogeneous problem is an ideal in the ring
$\bH[z]$. The problem containing  both ``left" and ``right" interpolation conditions is
quite different: there may exist infinitely many low-degree solutions and the
solution set of the homogeneous problem is a quasi-ideal in $\bH[z]$.
\end{abstract}

\maketitle 
\section{Introduction}
\setcounter{equation}{0}

Given distinct points $z_1,\ldots,z_n\in\C$ and target values $c_1,\ldots,c_n\in\C$,
the Lagrange interpolation problem consists of finding a complex polynomial
$f\in\C[z]$ such that
\begin{equation}
f(z_i)=c_i\quad\mbox{for}\quad i=1,\ldots,n.
\label{1.1}
\end{equation}
A particular solution to this problem is the {\em Lagrange interpolation polynomial}
\begin{equation}
\widetilde{f}(z)=\sum_{j=1}^n \frac{c_j p_j(z)}{p_j(z_j)},\quad
\mbox{where}\quad p_j(z)=\prod_{\substack{i=1 \\ i\neq j}}^n (z-z_j),
\label{1.2}   
\end{equation}
while all polynomials satisfying conditions \eqref{1.1} are parametrized 
by the formula
\begin{equation}
f(z)=\widetilde{f}(z)+p(z)h(z), \quad p(z)=\prod_{i=1}^n(z-z_i), \quad h\in\C[z],
\label{1.3}   
\end{equation}
where $h$ is the free parameter. The latter representation holds since 
the mapping $f\mapsto (f(z_1),\ldots,f(z_n))$ is linear from $\C[z]$ to $\C^n$ and since 
the solution set of the corresponding homogeneous problem is the ideal in $\C[z]$ generated by $p$. 

\smallskip

Over the years, Lagrange interpolation has been played a prominent role in approximation theory and 
numerical analysis; more recent applications include image processing and control theory.
The problem can be settled exactly as in the complex case for polynomials over 
any field (including finite fields, which has applications in cryptography).
However, interpolation problems in non-commutative polynomial rings have not 
attracted much attention so far. The objective of this paper is to study the Lagrange interpolation 
problem for polynomials over 
the sqew field $\bH$ 
of real quaternions
\begin{equation}
\alpha=x_0+{\bf i}x_1+{\bf j}x_2+{\bf k}x_3 \qquad (x_0,x_1,x_2,x_3\in\mathbb R),
\label{1.4}
\end{equation}
where ${\bf i}, {\bf j}, {\bf k}$ are imaginary units commuting with the reals and such that
${\bf i}^2={\bf j}^2={\bf k}^2={\bf ijk}=-1$. We denote by $\bH[z]$ the ring of polynomials in one 
formal variable $z$ which commutes with quaternionic coefficients.
The ring operations in $\bH[z]$ are defined as in the commutative case, but as
multiplication in $\bH$ is not commutative, multiplication in $\bH[z]$ is not commutative
either. For $\alpha\in\bH$ and $f\in \bH[z]$, we define
$f^{\bl}(\alpha)$ and $f^{\br}(\alpha)$
(left and right evaluations of $f$ at $\alpha$) by
\begin{equation}
f^{\bl}(\alpha)=\sum_{k=0}^n\alpha^k f_k,\quad
f^{\br}(\alpha)=\sum_{k=0}^n f_k\alpha^k,\quad\mbox{if}\quad f(z)=\sum_{k=0}^n z^k f_k, \; 
\; f_k\in\bH.
\label{1.6}   
\end{equation}
Since $\R$ is the center of $\bH$, the ring $\R[z]$ of polynomials with
real coefficients is the center of $\bH[z]$. As a consequence of this observation,
we mention two cases where the left and the right evaluations produce the same result.
\begin{remark}
If $x\in\R$, then $f^{\bl}(x)=f^{\br}(x)$ for every $f\in\bH[z]$. On the other hand, if 
$f\in\R[z]$, then $f^{\bl}(\alpha)=f^{\br}(\alpha)$ for every $\alpha\in\bH$.
\label{R:1.1}
\end{remark}
In general, interpolation conditions imposed by the left and the right evaluations should be 
distinguished. We will consider the interpolation problem whose data set consists of 
two (not necessarily disjoint) finite sets 
\begin{equation}
\Lambda=\{\alpha_1,\ldots,\alpha_n\}\quad\mbox{and}\quad 
 \Omega=\{\beta_1,\ldots,\beta_m\}
\label{1.7}
\end{equation}
of distinct elements in $\bH$ along with the respective target values $c_1,\ldots,c_n$ and 
$d_1,\ldots,d_m$
in $\bH$. The {\em two-sided Lagrange problem} consists of finding an $f\in\bH[z]$ such that 
\begin{align}
f^{\bl}(\alpha_i)&=c_i\quad\mbox{for}\quad i=1,\ldots,n,\label{1.8}\\
f^{\br}(\beta_j)&=d_j\quad\mbox{for}\quad j=1,\ldots,m.\label{1.9}
\end{align}
The problem will be termed  {\em left-sided} if $ \Omega=\emptyset$ and  
{\em right-sided} if $\Lambda=\emptyset$.  
Since right and left evaluations coincide at real points, we may assign
all real interpolation nodes to the left set $\Lambda$ assuming therefore,
that $\Omega\cap \R=\emptyset$. We emphasize that the sets \eqref{1.7} do not have to be disjoint, so 
that we may  have left and right interpolation conditions at the same interpolation node 
$\alpha_i=\beta_j$.  One-sided interpolation problems in quaternionic and related non-commutative 
settings were previously 
discussed in \cite{lam1,hzw,eric,op}, mostly due to their connections with quaternionic 
Vandermonde matrices. These results are recalled in Section 3.1 below. 

\smallskip

The paper is organized as follows. Section 2 contains the background on quaternionic polynomials 
and their (left and right) zeros; left and right minimal polynomials are also recalled in Section 2.
In Section 3 we present certain necessary conditions for the problem
to have a solution and show that in case these conditions are met, one can assume without loss of 
generality that none three elements in the set $\Lambda\cup \Omega$ belong to the same conjugacy class.
It is shown in Section 4 that if some elements in $\Lambda$ have conjugates in $\Omega$, there are 
additional necessary (and this time, sufficient) conditions for the problem to be solvable given in 
terms of certain Sylvester equations and related to certain backward-shift operators on $\bH[z]$.
In Section 5 we present the parametrization of all solutions $f\in\bH[z]$ to the two-sided problem 
\eqref{1.8}, \eqref{1.9} in the form 
\begin{equation}
f(z)=\widetilde{f}(z)+f_h(z;\mu_1,\ldots\mu_k)+P_{\Lambda,{\boldsymbol\ell}}(z)\cdot h(z)\cdot 
P_{\Omega,{\bf r}}(z),
\label{1.11}
\end{equation}
where $\widetilde{f}$ is a particular low-degree solution, $f_h$ is the general low-degree
solution of the related homogeneous problem containing free parameters $\mu_1,\ldots,\mu_k$
(each parameter is associated to a pair $(\alpha_i,\beta_j)\in\Lambda\times\Omega$ of equivalent 
interpolation nodes and varies in a two-dimensional real subspace of $\bH$), 
$P_{\Lambda,{\boldsymbol\ell}}$ and $P_{\Omega,{\bf r}}$ are respectively the left and the right 
minimal polynomials of the sets $\Lambda$ and $\Omega$, and $h$ is the free parameter in $\bH[z]$.
The parametrization formula \eqref{1.11} somewhat resembles well known results on bi-tangential 
interpolation for matrix-valued complex polynomials \cite{bk,bcr}, in particular, the fact that 
in case of non-empty intersection of $\Lambda$ and $\Omega$, an additional condition is needed
to guarantee the uniqueness of a low-degree solution.

\section{Background}
\setcounter{equation}{0}

To start, let us fix notation and terminology. For $\alpha\in\bH$ of the form \eqref{1.4},
its real and imaginary parts, the quaternion conjugate and the absolute value
are defined as ${\rm Re}(\alpha)=x_0$, ${\rm Im}(\alpha)={\bf i}x_1+{\bf j}x_2+{\bf k}x_3$,
$\bar \alpha={\rm Re}(\alpha)-{\rm Im}(\alpha)$ and
$|\alpha|^2=\alpha\overline{\alpha}=|{\rm Re}(\alpha)|^2+|{\rm Im}(\alpha)|^2$,
respectively. As in the complex case, $\alpha +\overline{\alpha}=2{\rm 
Re}(\alpha)$.
\begin{definition}
{\rm Two quaternions $\alpha$ and $\beta$ are called {\em equivalent} (conjugate to each other)
if $\alpha=h^{-1}\beta h$ for some nonzero $h\in\mathbb H$.} 
\label{D:2.1}
\end{definition}
It follows (see e.g., \cite{fuzhen}) that
\begin{equation}
\alpha\sim\beta\quad\mbox{if and only if}\quad {\rm Re}(\alpha) ={\rm Re}(\beta) \;
\mbox{and} \; |\alpha|=|\beta|.
\label{2.1}
\end{equation}
Therefore, the conjugacy class of a given $\alpha\in\mathbb H$ form a $2$-sphere (of radius
$|{\rm Im}(\alpha)|$ around ${\rm Re}(\alpha)$) which will be denoted  by $[\alpha]$.
It is clear that $[\alpha]=\{\alpha\}$ if and only if $\alpha\in\mathbb R$.
\subsection{Polynomial conjugation} The conjugate of a polynomial $f$ is defined by
\begin{equation}
f^\sharp(z)=\sum_{j=0}^n z^j \overline{f}_j\quad\mbox{if}\quad f(z)=\sum_{j=0}^n z^j f_j.
\label{2.2}
\end{equation}
The anti-linear involution $f\mapsto f^\sharp$ can be viewed as an extension of the   
quaternionic conjugation $\alpha\mapsto \overline{\alpha}$ from $\bH$ to $\bH[z]$.
A polynomial $f$ is real if and only if $f\equiv f^\sharp$. Some further properties of
polynomial conjugation are listed below:
\begin{equation}
ff^\sharp=f^\sharp f,\quad (fg)^\sharp=g^\sharp f^\sharp,\quad (fg)(fg)^\sharp=f(gg^\sharp)
f^\sharp=(ff^\sharp)(gg^\sharp).
\label{2.3}
\end{equation}

\subsection{Left and right zeros of polynomials}
We now recall some results on quaternionic polynomials and their roots
needed for the subsequent analysis and presented here in the form suitable
for our needs. For a more detailed exposition, we refer to \cite{lam}
and references therein.
\begin{definition}
{\rm An element $\alpha\in\bH$ is a {\em left zero} of $f\in\bH[z]$ if 
$f^{\bl}(\alpha)=0$, and it is a {\em right zero} of $f$ if $f^{\br}(\alpha)=0$.}
\label{D:2.2}   
\end{definition}
We will denote by $\cZ_{\boldsymbol\ell}(f)$ and $\cZ_{{\boldsymbol r}}(f)$ the sets of all
left and all right zeros of $f$ respectively.  It follows from Remark \ref{R:1.1} that 
for a real polynomial $f\in\R[z]$, these two sets coincide: $\cZ(f):=\cZ_{\boldsymbol\ell}(f)=\cZ_{\bf 
r}(f)$.
\begin{example}
{\rm For a non-real quaternion $\alpha$, the real polynomial
\begin{equation}
\cX_{[\alpha]}(z)=(z-\alpha)(z-\overline{\alpha})=(z-\overline{\alpha})(z-\alpha)=
z^2-2z\cdot {\rm Re}(\alpha)+|\alpha|^2
\label{2.4}   
\end{equation}
is called the {\em characteristic polynomial} of the conjugacy class $[\alpha]$
(by \eqref{2.1}, $\cX_{[\alpha]}=\cX_{[\beta]}$ if and only if $\alpha\sim\beta$) and can be
characterized as a unique monic quadratic polynomial with the zero set equal $[\alpha]$.
It is irreducible over $\R$ since ${\rm Re}(\alpha)<|a|$ and it is easily verified that  
conversely, any monic real quadratic polynomial without real roots is the characteristic
polynomial
of a unique quaternionic conjugacy class.}
\label{E:2.3}
\end{example}
If $f\in\R[z]$, then for each $\alpha\in\bH$ and $h\neq 0$, 
we have $f(h^{-1}\alpha h)=h^{-1}f(\alpha)h$ so that
$\cZ(f)$ contains, along with each $\alpha$, the whole conjugacy class $[\alpha]$.
For any $f\in\bH[z]$, the polynomial $ff^\sharp$ is real and therefore, $\cZ(ff^\sharp)$ is the 
union of finitely many conjugacy classes. The following result is due I. Niven \cite{niv}.
\begin{theorem}
Let $\deg (f)\ge 1$ and let $\cZ(ff^\sharp)=\bigcup V_i$ be 
the union of distinct conjugacy classes.
Then $\cZ_{\boldsymbol\ell}(f)\bigcup \cZ_{\bf r}(f)\subset\cZ(ff^\sharp)$ and each 
conjugacy class contains at least one left and at least one right zero of $f$.
\label{T:2.4}
\end{theorem}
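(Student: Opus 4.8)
The plan is to reduce everything to the factorization theory of polynomials over $\bH$, using the fact that $ff^\sharp\in\R[z]$ and that real polynomials split into real linear and irreducible quadratic factors, each quadratic being some $\cX_{[\alpha]}$. First I would establish the two inclusions $\cZ_{\boldsymbol\ell}(f)\subseteq\cZ(ff^\sharp)$ and $\cZ_{\bf r}(f)\subseteq\cZ(ff^\sharp)$. For the left inclusion: if $\alpha$ is a left zero of $f$, i.e.\ $f^{\bl}(\alpha)=0$, I want to conclude $(ff^\sharp)^{\bl}(\alpha)=0$. Since $ff^\sharp$ is real, its left and right evaluations agree by Remark~\ref{R:1.1}, so it suffices to see that a left zero of $f$ is a left zero of $fg$ for any $g$; this is the standard product rule for left evaluation (if $f^{\bl}(\alpha)=0$ then $(fg)^{\bl}(\alpha)=0$), which follows by writing $f(z)=(z-\alpha)\tilde f(z)$ — here one uses that $\alpha$ being a left zero of $f$ is equivalent to $z-\alpha$ being a left factor of $f$. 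The right inclusion is symmetric, working with right factors and the identity $(fg)^\sharp=g^\sharp f^\sharp$ so that $ff^\sharp=f^\sharp{}^\sharp f^\sharp$ has $\alpha$ as a right zero whenever $f$ does. That $\cZ(ff^\sharp)$ is a finite union of conjugacy classes is immediate from the remark preceding the theorem.

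Next I would prove the existence claim: each $V_i$ contains at least one left zero of $f$. Write $\cZ(ff^\sharp)=\bigcup_i V_i$; each $V_i$ is either a real point $\{x_i\}$ or the zero set of an irreducible quadratic $\cX_{[\alpha_i]}=(z-\alpha_i)(z-\overline{\alpha_i})$. In the real case, $(z-x_i)$ divides $ff^\sharp$ on the left, and since $\R[z]$ is central one checks $x_i$ is already a root of $f$ or of $f^\sharp$; if $f^{\bl}(x_i)\ne 0$ then from $f(x_i)f^\sharp(x_i)=0$ in $\bH$ (a division ring, no zero divisors) we get $f^\sharp(x_i)=0$, and taking $\sharp$ shows $\overline{f(x_i)}=0$, a contradiction — so $x_i$ is a left zero of $f$. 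For a genuine conjugacy class $V=[\alpha]$, the quadratic $\cX_{[\alpha]}$ divides $ff^\sharp$; the key point is that $\cX_{[\alpha]}$ being irreducible over $\R$ forces it to divide either $f$ or $f^\sharp$ — this is where I would invoke (or prove) that when a real irreducible quadratic divides a product $fg$ in $\bH[z]$, it divides one of the factors, using that $\bH[z]/(\cX_{[\alpha]})$ is isomorphic to a $2\times 2$ complex matrix algebra or arguing via the conjugacy-class zero structure. If $\cX_{[\alpha]}\mid f$ then every element of $[\alpha]$, in particular $\alpha$ itself, is a left zero of $f$ (and a right zero); if instead $\cX_{[\alpha]}\mid f^\sharp$, then applying $\sharp$ shows $\cX_{[\alpha]}=\cX_{[\alpha]}^\sharp\mid f$ again since $\cX_{[\alpha]}$ is real. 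Actually the cleaner route avoiding the division lemma: restrict $f$ to the class $V$. The map $z\mapsto f^{\bl}(z)$ on a conjugacy class $V$ either vanishes identically on $V$, or vanishes at exactly one point of $V$, by the "one zero per sphere" phenomenon for the left evaluation of a single polynomial; so I just need to rule out that $f$ has \emph{no} left zero in $V$. If it had none, then $f$ and $f^\sharp$ restricted to $V$ would be nowhere zero, yet $(ff^\sharp)^{\bl}$ vanishes on all of $V$; expanding $(ff^\sharp)^{\bl}(\alpha)$ in terms of $f^{\bl}(\alpha)$ and a conjugated evaluation of $f^\sharp$ (the Niven-style product formula for evaluation at a non-real point) yields the contradiction.

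The symmetric statement — each $V_i$ contains a right zero of $f$ — follows by applying the already-proved left statement to $f^\sharp$: since $(ff^\sharp)^\sharp=ff^\sharp$ has the same real zero set, $f^\sharp$ has a left zero in each $V_i$, and $\beta$ a left zero of $f^\sharp$ translates into $\bar\beta$ a right zero of $f$; as $V_i$ is closed under quaternion conjugation, this places a right zero of $f$ in $V_i$.

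I expect the main obstacle to be the existence half for the genuine (non-real) conjugacy classes: one must argue that a real irreducible quadratic dividing $ff^\sharp$ cannot "straddle" $f$ and $f^\sharp$ in a way that leaves $f$ with no left zero on that sphere. The honest way to handle this is the multiplicative formula for left evaluation at a point, namely $(fg)^{\bl}(\alpha)=0$ if $f^{\bl}(\alpha)=0$, and otherwise $(fg)^{\bl}(\alpha)=f^{\bl}(\gamma)\,g^{\bl}(\alpha)$ for a suitable $\gamma\in[\alpha]$ conjugate to $\alpha$ via $g^{\bl}(\alpha)$; combined with the nonexistence of zero divisors in $\bH$, this pins down exactly which factor carries the zero. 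I would recall this formula (it is standard, e.g.\ in \cite{lam}) at the start of the proof and then the rest is bookkeeping over the finitely many $V_i$.
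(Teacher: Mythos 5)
The paper does not actually prove Theorem~\ref{T:2.4}: it cites it to Niven \cite{niv} and moves on. So there is no in-paper argument to compare against; I will judge your proposal on its own.

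Your first half (the inclusion $\cZ_{\boldsymbol\ell}(f)\cup\cZ_{\bf r}(f)\subset\cZ(ff^\sharp)$, and the real-node case) is fine. The gap is in the existence claim for a genuine (non-real) conjugacy class $V=[\alpha]$, and both routes you sketch fail at the same spot.

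The ``division lemma'' route is simply false. It is \emph{not} true that a real irreducible quadratic dividing a product in $\bH[z]$ must divide one of the factors: $\cX_{[{\bf i}]}(z)=z^2+1=(z-{\bf i})(z+{\bf i})$ divides $(z-{\bf i})(z-{\bf i})^\sharp$ but divides neither $z-{\bf i}$ nor $z+{\bf i}$. The isomorphism $\bH[z]/(\cX_{[\alpha]})\cong M_2(\C)$ that you invoke actually cuts the other way --- $M_2(\C)$ is not a domain, so there are nontrivial zero divisors and no such divisibility conclusion.

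The ``cleaner route'' is under-specified and does not close. Suppose $f$ has no left zero in $V$ and take any $\alpha\in V$. From $(ff^\sharp)^{\bl}(\alpha)=0$ and $f^{\bl}(\alpha)\ne 0$, the product formula \eqref{2.6} gives $f^{\sharp\bl}(\gamma)=0$ for some $\gamma\in V$, hence $f^{\br}(\overline{\gamma})=0$. So you obtain a \emph{right} zero of $f$ in $V$, not a contradiction. Applying the same manipulation to $f^\sharp f=ff^\sharp$ with the roles of $f,f^\sharp$ swapped produces the dichotomy ``$\gamma\in V$ is a left zero of $f$, \emph{or} $\overline{\alpha}$ is a right zero of $f$'' --- again no contradiction, and no left zero. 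What you actually need (and what Niven's proof supplies) is the reduction of $f$ modulo $\cX_V$: write $f=\cX_V q+r$ with $r(z)=zc+d$. Since $\cX_V$ is central, $ff^\sharp\equiv rr^\sharp\pmod{\cX_V}$, so $\cX_V\mid rr^\sharp$. If $c=d=0$ then $\cX_V\mid f$ and $V\subset\cZ_{\boldsymbol\ell}(f)\cap\cZ_{\bf r}(f)$. If $c=0\ne d$ then $rr^\sharp=|d|^2\ne0$, impossible. If $c\ne 0$ then $rr^\sharp=|c|^2z^2+(c\overline{d}+d\overline{c})z+|d|^2=|c|^2\cX_V$, and one reads off that $\beta=-dc^{-1}$ satisfies $r^{\bl}(\beta)=0$ with ${\rm Re}(\beta)={\rm Re}(\alpha)$, $|\beta|=|\alpha|$, i.e.\ $\beta\in V$; hence $f^{\bl}(\beta)=0$. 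Symmetrically $\beta'=-c^{-1}d\in V$ is a right zero. This computation is the missing ingredient; the multiplicative formula \eqref{2.6} alone only ever hands you one side at a time.

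Your final paragraph (deducing the right-zero statement from the left-zero statement applied to $f^\sharp$, using that $V$ is conjugation-invariant) is correct, but it depends on the left-zero statement being fully proved first, which it currently is not.
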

Since any real polynomial of positive degree has a complex root, the latter theorem implies that 
for any $f\in\bH[z]$ of positive degree, the zero sets $\cZ_{\boldsymbol\ell}(f)$ and 
$\cZ_{\bf r}(f)$ are not empty (the Fundamental Theorem of Algebra in $\bH[z]$).
\begin{remark}  
As a consequence of the Euclidean algorithm which holds in $\bH[z]$ in both left and right versions, we 
have
\begin{align}
&\alpha\in\cZ_{\boldsymbol\ell}(f) \; \; \Longleftrightarrow \; f(z)=(z-\alpha)h(z)\quad
\mbox{for some}\quad h\in\bH[z];\label{2.5}\\   
&\alpha\in\cZ_{\boldsymbol r}(f) \; \; \Longleftrightarrow \; f(z)=\widetilde{h}(z)(z-\alpha)\quad
\mbox{for some}\quad \widetilde{h}\in \bH[z].\notag
\end{align}
\label{R:2.5}
\end{remark}
It follows from \eqref{2.5}, that if $g^{\bl}(\alpha)=0$, then $(gf)^{\bl}(\alpha)=0$ 
for any $f\in\bH[z]$.
On the other hand, since
$(gf)(z)=\sum_{k=0}^n z^k g(z)f_k$, we also have
$$ 
(gf)^{\bl}(\alpha)=g^{\bl}(\alpha)\sum_{k=0}^n (g^{\bl}(\alpha)^{-1}\alpha
g^{\bl}(\alpha))^kf_k=g^{\bl}(\alpha)f^{\bl}(g^{\bl}(\alpha)^{-1}\alpha g^{\bl}(\alpha)),
$$
provided  $g^{\bl}(\alpha)\neq 0$. Therefore, the left evaluation of the product
of two polynomials is defined by the formula
\begin{equation}
(gf)^{\bl}(\alpha)=\left\{\begin{array}{ccc} 
g^{\bl}(\alpha)\cdot f^{\bl}\left(g^{\bl}(\alpha)^{-1}\alpha 
g^{\bl}(\alpha)\right)&\mbox{if} &
g^{\bl}(\alpha)\neq 0, \\
0 & \mbox{if} & g^{\bl}(\alpha)= 0.\end{array}\right.
\label{2.6}   
\end{equation}
Similarly, the right evaluation of the product is given by 
\begin{equation}
(gf)^{\br}(\alpha)=\left\{\begin{array}{ccc} g^{\br}\left(f^{\br}(\alpha)\alpha 
f^{\br}(\alpha)^{-1}\right)\cdot
f^{\br}(\alpha)&\mbox{if} & f^{\br}(\alpha)\neq 0, \\
0 & \mbox{if} & f^{\br}(\alpha)= 0.\end{array}\right.
\label{2.7}   
\end{equation}
Note that in case $\alpha\in\R$, both \eqref{2.6} and \eqref{2.7} simplify to
$(gf)(\alpha)=g(\alpha)f(\alpha)$.
\begin{lemma}
Let $f\in\bH[z]$ and let $\alpha,\beta\in\bH$ be two distinct conjugates:
$\beta\in[\alpha]\backslash\{\alpha\}$. The following are equivalent:
\begin{enumerate}
\item[(1)] $\alpha,\beta\in Z_{\boldsymbol\ell}(f)$;\qquad $(2)$ $\alpha,\beta\in Z_{\boldsymbol 
r}(f)$;
\qquad $(3)$ $[\alpha]\subset Z_{\boldsymbol\ell}(f)\cap
Z_{\boldsymbol r}(f)$;
\item[(4)] $f$ can be factored as
\begin{equation}
f(z)=\cX_{[\alpha]}(z)g(z)=g(z)\cX_{[\alpha]}(z)\quad\mbox{for some}\quad g\in\bH[z].
\label{2.8}
\end{equation}
\end{enumerate}
\label{L:2.6}
\end{lemma}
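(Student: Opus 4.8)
The plan is to prove the chain $(1)\Rightarrow(4)\Rightarrow(3)\Rightarrow(2)$ and $(3)\Rightarrow(1)$, the last two being essentially trivial (since $(3)$ asserts $[\alpha]$ is contained in both zero sets, which immediately gives back $(1)$ and $(2)$ because $\alpha,\beta\in[\alpha]$). So the work is in $(1)\Rightarrow(4)$ and $(4)\Rightarrow(3)$, together with the symmetric $(2)\Rightarrow(4)$, which should follow from $(1)\Rightarrow(4)$ applied to $f^\sharp$ using the identities in \eqref{2.3} and the fact that $\cX_{[\alpha]}$ is real and hence fixed by $\sharp$.

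For $(4)\Rightarrow(3)$: if $f(z)=\cX_{[\alpha]}(z)g(z)$ with $\cX_{[\alpha]}\in\R[z]$, then for any $\gamma\in[\alpha]$ we have $\cX_{[\alpha]}^{\bl}(\gamma)=0$ by Example \ref{E:2.3} (its zero set is exactly $[\alpha]$), so by \eqref{2.6} we get $f^{\bl}(\gamma)=0$; thus $[\alpha]\subset\cZ_{\boldsymbol\ell}(f)$. Using the other factorization $f=g\cX_{[\alpha]}$ together with \eqref{2.7} (and the fact that $\cX_{[\alpha]}^{\br}(\gamma)=0$ as well, since $\cX_{[\alpha]}$ is real) gives $[\alpha]\subset\cZ_{\boldsymbol r}(f)$. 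This settles $(4)\Rightarrow(3)$.

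The main step is $(1)\Rightarrow(4)$. First I would use Remark \ref{R:2.5}: since $\alpha\in\cZ_{\boldsymbol\ell}(f)$, write $f(z)=(z-\alpha)h(z)$. Now I impose the second condition $f^{\bl}(\beta)=0$ with $\beta\in[\alpha]\setminus\{\alpha\}$. Apply the product rule \eqref{2.6} with $g(z)=z-\alpha$: here $g^{\bl}(\beta)=\beta-\alpha\neq 0$, so $0=f^{\bl}(\beta)=(\beta-\alpha)\,h^{\bl}\big((\beta-\alpha)^{-1}\beta(\beta-\alpha)\big)$, forcing $h^{\bl}(\beta')=0$ where $\beta':=(\beta-\alpha)^{-1}\beta(\beta-\alpha)$. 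The key observation, which I expect to be the crux, is that $\beta'\in[\alpha]$ (it is a conjugate of $\beta$, hence of $\alpha$) and moreover $\beta'=\bar\alpha$. To see the latter, note $\beta'$ and $\alpha$ are conjugate and satisfy $(\beta-\alpha)\beta'=\beta(\beta-\alpha)$, i.e. $\beta\beta'-\alpha\beta'=\beta^2-\beta\alpha$; combining with ${\rm Re}(\alpha)={\rm Re}(\beta)$ and $|\alpha|=|\beta|$ (so $\beta$ satisfies $\cX_{[\alpha]}$, giving $\beta^2=2{\rm Re}(\alpha)\beta-|\alpha|^2$) one computes $\beta'=2{\rm Re}(\alpha)-\alpha=\bar\alpha$. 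Thus $h^{\bl}(\bar\alpha)=0$, so by \eqref{2.5} $h(z)=(z-\bar\alpha)g(z)$ for some $g\in\bH[z]$, whence $f(z)=(z-\alpha)(z-\bar\alpha)g(z)=\cX_{[\alpha]}(z)g(z)$. Finally, to get the second equality in \eqref{2.8}, observe $\cX_{[\alpha]}\in\R[z]$ lies in the center $\R[z]$ of $\bH[z]$, so $\cX_{[\alpha]}(z)g(z)=g(z)\cX_{[\alpha]}(z)$ automatically. The symmetric argument for $(2)\Rightarrow(4)$ either mirrors this using \eqref{2.7} and right division, or is obtained by passing to $f^\sharp$: $\alpha,\beta\in\cZ_{\boldsymbol r}(f)$ is equivalent to $\bar\alpha,\bar\beta\in\cZ_{\boldsymbol\ell}(f^\sharp)$ (from \eqref{2.2}, $f^{\br}(\gamma)^{\,-}$ relates to $(f^\sharp)^{\bl}(\bar\gamma)$), and $\bar\beta\in[\bar\alpha]\setminus\{\bar\alpha\}$, so $(1)\Rightarrow(4)$ for $f^\sharp$ gives $f^\sharp=\cX_{[\bar\alpha]}g$; applying $\sharp$ and using \eqref{2.3} with $\cX_{[\bar\alpha]}=\cX_{[\alpha]}$ real returns $f=g^\sharp\cX_{[\alpha]}=\cX_{[\alpha]}g^\sharp$.

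The one place demanding care is the identity $\beta'=\bar\alpha$ in the computation above; it is purely a $4$-dimensional real linear algebra fact but it is the hinge of the whole proof, since it is exactly what forces the \emph{real} quadratic factor $\cX_{[\alpha]}$ to appear rather than merely two linear factors. Everything else is bookkeeping with \eqref{2.5}, \eqref{2.6}, \eqref{2.7}, and the centrality of $\R[z]$.
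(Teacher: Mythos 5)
Your proposal is correct and the crux is exactly the paper's argument: factor $f=(z-\alpha)h$, left-evaluate at $\beta$ via \eqref{2.6}, use the identity $(\beta-\alpha)^{-1}\beta(\beta-\alpha)=\overline{\alpha}$ (the paper's \eqref{2.9}, derived there from $\cX_{[\alpha]}(\beta)=0$ just as you do) to get $h^{\bl}(\overline{\alpha})=0$, and factor again to obtain $\cX_{[\alpha]}g$, with centrality of $\R[z]$ giving the two-sided form. Your treatment is in fact slightly more complete than the paper's, which only records $(4)\Rightarrow(3)\Rightarrow(2)$, $(3)\Rightarrow(1)$ and $(1)\Rightarrow(4)$ and leaves the return implication from $(2)$ implicit, whereas you close the cycle explicitly via $f^\sharp$ (or the symmetric right-division argument).
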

\begin{proof}
Implications $(4)\Rightarrow(3)\Rightarrow(2)$ and $(3)\Rightarrow(1)$ are
trivial. To confirm  $(1)\Rightarrow(4)$, let us assume that $\alpha,\beta\in
Z_{\boldsymbol\ell}(f)$.  Since $\beta\in[\alpha]$, then
$$
\beta^2-\beta(\alpha+\overline{\alpha})+|\alpha|^2=\cX_{[\alpha]}(\beta)=0
$$
so that $\beta(\beta-\alpha)=\beta^2-\beta
\alpha=\beta\overline{\alpha}-|\alpha|^2=(\beta-\alpha)\overline{\alpha}$, and we conclude:
\begin{equation}
(\beta-\alpha)^{-1}\beta(\beta-\alpha)=\overline{\alpha}, \quad\mbox{whenever}\quad
\beta\in[\alpha]\backslash\{\alpha\}.
\label{2.9}
\end{equation}
Since $f^{\bl}(\alpha)=0$, $f$ can be factored as in \eqref{2.5}. We use 
the latter factorization to left-evaluate $f$ at $\beta$; according to \eqref{2.6} and 
\eqref{2.9},
\begin{equation}
f^{\bl}(\beta)=(\beta-\alpha)\cdot h^{\bl}((\beta-\alpha)^{-1}\beta 
(\beta-\alpha))=(\beta-\alpha)\cdot h^{\bl}(\overline{\alpha}).
\label{2.10}
\end{equation}
Since $f^{\bl}(\beta)=0$ and since $\bH$ is a division ring, we conclude from \eqref{2.10} that
$h^{\bl}(\overline{\alpha})=0$. Then again by \eqref{2.5}, $h$ can be factored as
$h(z)=(z-\overline{\alpha})\cdot g(z)$ for some $g\in\bH[z]$ which being combined with factorization 
\eqref{2.5} for $f$ gives \eqref{2.9}:
$f(z)=(z-\alpha)\cdot h(z)=(z-\alpha)(z-\overline{\alpha})\cdot g(z)=\cX_{[\alpha]}(z)\cdot g(z)$.
\end{proof}
The last lemma supplements Theorem \ref{T:2.4} as follows:
\begin{remark}
Each conjugacy class $V\subset\bH$ 
containing zeros of an  $f\in\bH[z]$ either contains exactly one left and exactly one 
right zero of $f$ or $V\subset Z_{\boldsymbol\ell}(f)\cap Z_{\bf r}(f)$. 
\label{R:2.6a}
\end{remark}
\subsection{Minimal polynomials} Since the division algorithm (left and right) holds in
$\bH[z]$, any ideal (left or right) is principal. Given a set $\Lambda\subset\bH$,
the sets
\begin{equation}
{\mathbb I}_{\Lambda,{\boldsymbol \ell}}:=\{f\in\bH[z]: \;  \cZ_{\boldsymbol \ell}\supset 
\Lambda\}\quad\mbox{and}\quad 
{\mathbb I}_{\Lambda,{\bf r}}:=\{f\in\bH[z]: \;  \cZ_{\bf r}\supset\Lambda\}
\label{2.11}
\end{equation}
are respectively, a right and a left ideal in $\bH[z]$, which are non-trivial if and only if $\Lambda$ is 
contained in a finite union of conjugacy classes.  In the latter case, ${\mathbb I}_{\Lambda,{\boldsymbol \ell}}$ 
and ${\mathbb I}_{\Lambda,{\bf r}}$) are generated
by (unique) monic polynomials $P_{\Lambda,{\boldsymbol \ell}}$ and $P_{\Lambda,{\bf r}}$ which will be called 
the {\em left} and the {\em right minimal polynomials} (abbreviated to {\bf lmp} and {\bf rmp}, respectively, in 
what follows) of $\Lambda$. They can be equivalently
defined as unique monic polynomials of the lowest degree with the left (respectively, right) zero set equal
$\Lambda$. Since ${\mathbb I}_{\emptyset,{\boldsymbol \ell}}={\mathbb I}_{\emptyset,{\bf 
r}}=\bH[z]$, it makes sense to define the {\bf lmp} and {\bf rmp} of the empty set by 
letting $P_{\emptyset,{\boldsymbol \ell}}=P_{\emptyset,{\bf r}}\equiv 1$.
The next observation is a consequence of Lemma \ref{L:2.6}.
\begin{remark}
Let $\Lambda\subset \bH$ be contained in a finite union of conjugacy classes.
If $V$ is a conjugacy class disjoint with $\Lambda$ and if $U\subset V$ contains at least
two elements, then  $P_{\Lambda\cup U,{\boldsymbol
\ell}}(z)=\cX_V(z)P_{\Lambda,{\boldsymbol \ell}}(z)$ and $P_{\Lambda\cup 
U,{\bf r}}(z)=\cX_V(z)P_{\Lambda,{\bf r}}(z)$.
\label{R:3.6}
\end{remark}
\begin{theorem}
Let $\Lambda\subset\bH$ be arranged as
$\Lambda=\{\alpha_1,\ldots,\alpha_s\}\cup U_1\cup\ldots \cup U_k,
$
where $U_1,\ldots,U_k$ are subsets of disjoint conjugacy classes
$V_1,\ldots,V_k$ respectively containing at least two elements each, and
$\alpha_1,\ldots,\alpha_s\in\bH\backslash
(V_1\bigcup\ldots\bigcup V_k)$ are non-equivalent quaternions. Then
\begin{equation}
P_{\Lambda,{\boldsymbol \ell}}(z)=p_s(z)\cdot \prod_{j=1}^k \cX_{V_j}(z)\quad\mbox{and}
\quad P_{\Lambda,{\bf r}}(z)=q_s(z)\cdot\prod_{j=1}^k \cX_{V_j}(z)
\label{2.12}
\end{equation}
where $p_s$ is the monic polynomial of degree $s$ obtained from the recursion
\begin{equation}
p_0(z)\equiv 1,\quad p_{j+1}(z)=p_{j}(z)\left(z-p^{\bl}_{j}(\alpha_{j+1})^{-1}\alpha_{j+1}
p^{\bl}_{j}(\alpha_{j+1})\right),\label{2.14}
\end{equation} 
and $q_s$ is the monic polynomial of degree $s$ obtained from the recursion
\begin{equation}
q_0(z)\equiv 1,\quad q_{j+1}(z)=(z-q^{\br}_{j}(\alpha_{j+1})\alpha_{j+1}
q^{\br}_{j}(\alpha_{j+1})^{-1})\cdot q_{j}(z).
\label{2.13}
\end{equation}
In particular, $\deg (P_{\Lambda,{\boldsymbol\ell}})=\deg (P_{\Lambda,{\bf r}})=s+2k$.
\label{T:3.7}   
\end{theorem}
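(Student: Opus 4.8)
The plan is to reduce the statement to the special case $k=0$, in which $\Lambda=\{\alpha_1,\dots,\alpha_s\}$ consists of $s$ pairwise non-equivalent quaternions, and then to settle that case by an induction on $s$ organized around the product-evaluation formula \eqref{2.6}. The passage from the special case to the general one will be handled by applying Remark \ref{R:3.6} once for each spherical block $U_j$.

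The first ingredient I would record is a sharpening of the Fundamental Theorem of Algebra: a nonzero $f\in\bH[z]$ has at most $\deg f$ pairwise non-equivalent left zeros, and likewise at most $\deg f$ pairwise non-equivalent right zeros. For the left version, given pairwise non-equivalent $\gamma_1,\dots,\gamma_r\in\cZ_{\boldsymbol\ell}(f)$, I factor $f(z)=(z-\gamma_1)h(z)$ by \eqref{2.5} and left-evaluate at each $\gamma_i$ with $i\ge 2$ via \eqref{2.6}; since $\gamma_i\neq\gamma_1$, this shows $h$ has a left zero at $\gamma_i':=(\gamma_i-\gamma_1)^{-1}\gamma_i(\gamma_i-\gamma_1)\in[\gamma_i]$. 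The quaternions $\gamma_2',\dots,\gamma_r'$ are again pairwise non-equivalent, so induction on $r$ gives $\deg h\ge r-1$ and hence $\deg f\ge r$. The right-sided statement follows symmetrically from the second factorization in \eqref{2.5} and from \eqref{2.7}.

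Next, by induction on $j$ I would prove that $p_j$ from \eqref{2.14} is well-defined (the quaternion inverted at each step is nonzero), is monic of degree $j$, and satisfies $p_j^{\bl}(\alpha_i)=0$ for $i=1,\dots,j$; the case $j=0$ is vacuous. Assuming this for $j$: since $p_j$ has degree $j$, the lemma of the previous paragraph forbids $\alpha_1,\dots,\alpha_j,\alpha_{j+1}$ from all being left zeros of $p_j$, and as the first $j$ of them are, we get $p_j^{\bl}(\alpha_{j+1})\neq 0$; hence $p_{j+1}$ is well-defined and monic of degree $j+1$. For $i\le j$, formula \eqref{2.6} gives $p_{j+1}^{\bl}(\alpha_i)=0$ from $p_j^{\bl}(\alpha_i)=0$; and writing $a:=p_j^{\bl}(\alpha_{j+1})\neq 0$, formula \eqref{2.6} gives $p_{j+1}^{\bl}(\alpha_{j+1})=a\bigl(a^{-1}\alpha_{j+1}a-a^{-1}\alpha_{j+1}a\bigr)=0$, which is precisely why \eqref{2.14} conjugates $\alpha_{j+1}$ the way it does. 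Thus $p_s$ lies in the right ideal ${\mathbb I}_{\{\alpha_1,\dots,\alpha_s\},{\boldsymbol\ell}}$; that ideal is generated by the monic polynomial $P_{\{\alpha_1,\dots,\alpha_s\},{\boldsymbol\ell}}$, whose degree is at least $s$ by the preliminary lemma (it has the $s$ pairwise non-equivalent left zeros $\alpha_1,\dots,\alpha_s$), so comparing degrees and leading coefficients forces $p_s=P_{\{\alpha_1,\dots,\alpha_s\},{\boldsymbol\ell}}$. Running the mirror-image induction with \eqref{2.13} and \eqref{2.7} gives $q_s=P_{\{\alpha_1,\dots,\alpha_s\},{\bf r}}$.

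Finally, in the general case the conjugacy classes $V_1,\dots,V_k$ are pairwise disjoint and disjoint from $\{\alpha_1,\dots,\alpha_s\}$ by hypothesis, so adjoining $U_1,\dots,U_k$ one block at a time and invoking Remark \ref{R:3.6} at each step multiplies the left (respectively right) minimal polynomial successively by $\cX_{V_1},\dots,\cX_{V_k}$, which yields \eqref{2.12}; the factors $\cX_{V_j}$, being real, are central in $\bH[z]$, so their placement is immaterial. The degree statement $s+2k$ then follows from $\deg p_s=\deg q_s=s$ and $\deg\cX_{V_j}=2$. I expect the only genuine obstacle to be the preliminary degree lemma together with the well-definedness of the recursions \eqref{2.14}--\eqref{2.13} (this is exactly where the standing non-equivalence of $\alpha_1,\dots,\alpha_s$ is used); once those are in place the remainder is bookkeeping with \eqref{2.6}--\eqref{2.7} and Remark \ref{R:3.6}.
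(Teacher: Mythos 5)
Your proposal is correct and follows essentially the same route as the paper: an induction showing that $p_j$ is the left minimal polynomial of $\{\alpha_1,\ldots,\alpha_j\}$ with $p_j^{\bl}(\alpha_{j+1})\neq 0$, followed by repeated application of Remark \ref{R:3.6} for the spherical blocks. The only difference is that you make explicit the supporting fact the paper leaves implicit (a nonzero polynomial has at most $\deg f$ pairwise non-equivalent left zeros), which is exactly what justifies both the minimality of $p_j$ and the well-definedness of the recursion.
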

\begin{proof} 
Based on the fact that all $\alpha_j$'s belong to distinct conjugacy classes, the 
induction argument shows that for each $j=1,\ldots,s$, the polynomial $p_j(z)$ is the {\bf
lmp} of the set $\{\alpha_1,\ldots,\alpha_j\}$ and that $p^{\bl}_{j}(\alpha_{j+1})\neq 0$ (so that the 
recursion 
formula \eqref{2.14} makes sence). Thus, $p_s$ is the {\bf lmp} of the set $\{\alpha_1,\ldots,\alpha_s\}$, 
and  the repeated application of Remark \ref{R:3.6} leads us to
$$
P_{\Lambda,{\boldsymbol\ell}}(z)=
P_{\{\alpha_1,\ldots,\alpha_s\},{\boldsymbol\ell}}(z)\cdot \prod_{j=1}^k \cX_{V_j}(z)=p_s(z)\cdot \prod_{j=1}^k 
\cX_{V_j}(z),
$$
i.e., to the first formula in \eqref{2.12}. The second formula follows in much the same way. The final 
statement of  the theorem is an obvious consequence of \eqref{2.14}, \eqref{2.13}.
\end{proof}
Recursion \eqref{2.14} was carried out under the assumption that all $\alpha_j$'s are non-equivalent.
It is worth noting that a slight modification of \eqref{2.14} applies to {\em any} finite set
as follows:  {\em the {\bf lmp} of the set $\Lambda=\{\alpha_1,\ldots,\alpha_n\}\subset \bH$
equals the polynomial $p_n(z)$ obtained recursively by letting $p_0(z)\equiv 1$ and
\begin{equation}
p_{j+1}(z)=\left\{\begin{array}{ccc} p_{j}(z)\left(z-p^{\bl}_{j}(\alpha_{j+1})^{-1}\alpha_{j+1}
p^{\bl}_{j}(\alpha_{j+1})\right)&\mbox{if}& p^{\bl}_{j}(\alpha_{j+1})\neq 0,\\
p_{j}(z)&\mbox{if}& p^{\bl}_{j}(\alpha_{j+1})= 0.\end{array}\right.
\label{2.15}
\end{equation}}
We omit the straightforward inductive proof and the formulation of the  right-sided version of 
\eqref{2.15}.
Instead, we include several final remarks.

\smallskip

It can be shown (again by induction) that $p^{\bl}_{j}(\alpha_{j+1})=0$ if only if
at least two elements in $\{\alpha_1,\ldots\alpha_j\}$ are conjugates of $\alpha_{j+1}$.
Thus, the recursion \eqref{2.15} takes into account
the two first elements from the same conjugacy class $V$ and dismisses all further elements from
$V$. In particular, the original recursion \eqref{2.14} applies to any finite set $\Lambda\subset \bH$,
none three elements of which are equivalent.

\smallskip

If $V$ is a conjugacy class and if
$V\cap\Lambda=\{\alpha_{i_1},\alpha_{i_2},\ldots\}$, then 
$\cZ_{\boldsymbol \ell}(p_j)\cap V=\emptyset$ for $j<i_1$ and
$\cZ_{\boldsymbol \ell}(p_j)\cap V=\{\alpha_{i_1}\}$ for $i_1\le j<i_2$.
Since $\alpha_{i_2}\neq \alpha_{i_1}$, we have $p^{\bl}_{i_2-1}(\alpha_{i_2})\neq 0$.
The polynomial $p_{i_2}$ defined by the top formula in \eqref{2.15} has two
distinct left zeros $\alpha_{i_1}, \, \alpha_{i_2}$ in $V$ and therefore,
$V\subset \cZ_{\boldsymbol \ell}(p_j)\bigcap \cZ_{\bf r}(p_j)$ for all $j\ge i_2$,
by Lemma \ref{L:2.6} and formulas \eqref{2.6}, \eqref{2.7}. 

\smallskip

Recursion \eqref{2.15} produces the minimal polynomial $P_{\Lambda,{\boldsymbol \ell}}$
as a product of linear factors. Although the outcome $P_{\Lambda,{\boldsymbol \ell}}$
does not depend on the order in which the elements of $\Lambda$ are arranged,
different permutations of $\Lambda$ produce via recursion \eqref{2.15} different factorizations of 
$P_{\Lambda,{\boldsymbol \ell}}$. Factorization \eqref{2.12} comes up if $\Lambda$ is arranged 
so that the two first appearances of the elements from each conjugacy class occur consecutively,
that is, if $\alpha_j$ is not equivalent to $\alpha_1,\ldots,\alpha_{j-1}$, then either 
$\alpha_{j}\sim\alpha_{j+1}$ or $\Lambda\backslash\{\alpha_j\}$ contains no element conjugate to $\alpha_j$.

\section{Consistency of interpolation conditions and simple cases}
\setcounter{equation}{0}

In the complex setting, the Lagrange problem
with distinct interpolation  nodes is always consistent. In the quaternionic case,
inconsistency may occur if the set
$\Lambda\cup\Omega$ contains more than two points
from the same conjugacy class; the one-sided version of this phenomenon was observed
in \cite{genstr} in a more general setting of slice regular functions.
\begin{lemma}
For $f\in\bH[z]$ and three distinct equivalent quaternions $\alpha,\beta,\gamma$,
\begin{align}
f^{\bl}(\gamma)=&(\gamma-\beta)(\alpha-\beta)^{-1}f^{\bl}(\alpha)+(\alpha-\gamma)
(\alpha-\beta)^{-1}f^{\bl}(\beta),\label{3.1}\\
f^{\br}(\gamma)=&(\alpha-\beta)^{-1}f^{\bl}(\alpha)\gamma-
\beta(\alpha-\beta)^{-1}f^{\bl}(\alpha)\notag\\
\quad 
&+\alpha(\alpha-\beta)^{-1}f^{\bl}(\beta)-(\alpha-\beta)^{-1}f^{\bl}(\beta)\gamma,\label{3.2}\\
f^{\br}(\gamma)=&f^{\br}(\alpha)(\alpha-\beta)^{-1}(\gamma-\beta)+f^{\br}(\beta)(\alpha-\beta)^{-1}
(\alpha-\gamma),\label{3.3}\\
f^{\bl}(\gamma)=&\gamma 
f^{\br}(\alpha)(\alpha-\beta)^{-1}-f^{\br}(\alpha)(\alpha-\beta)^{-1}
\beta\notag\\
&+f^{\br}(\beta)(\alpha-\beta)^{-1}\alpha-\gamma f^{\br}(\beta)(\alpha-\beta)^{-1}.\label{3.4}
\end{align}
\label{L:4.1}
\end{lemma}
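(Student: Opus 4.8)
The plan is to exploit the fact that the three quantities $f^{\bl}(\alpha)$, $f^{\bl}(\beta)$, $f^{\bl}(\gamma)$ depend only on the values of $f$ ``modulo'' the characteristic polynomial $\cX_{[\alpha]}$, together with linearity of left evaluation in $f$. First I would reduce to the case where $f$ has degree at most one. Indeed, by Lemma \ref{L:2.6} (or directly by \eqref{2.8}), $\cX_{[\alpha]}$ divides any polynomial whose left zero set contains two distinct conjugates; combined with \eqref{2.6} this shows that if $g(z)=\cX_{[\alpha]}(z)\,\widetilde g(z)$ then $g^{\bl}$ vanishes at every point of $[\alpha]$, in particular at $\alpha,\beta,\gamma$. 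Writing $f(z)=\cX_{[\alpha]}(z)\widetilde f(z)+(f_0+zf_1)$ via right division by the real (hence central) polynomial $\cX_{[\alpha]}$, we see that all four identities for $f$ follow from the corresponding identities for the linear polynomial $f_0+zf_1$, by linearity of both $f\mapsto f^{\bl}(\cdot)$ and $f\mapsto f^{\br}(\cdot)$.

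Second, for $f(z)=f_0+zf_1$ the left and right evaluations are explicit: $f^{\bl}(\eta)=f_0+\eta f_1$ and $f^{\br}(\eta)=f_0+f_1\eta$ for any $\eta\in\bH$. To prove \eqref{3.1} I would solve the $2\times 2$ system $f^{\bl}(\alpha)=f_0+\alpha f_1$, $f^{\bl}(\beta)=f_0+\beta f_1$ for $f_0,f_1$: subtracting gives $f_1=(\alpha-\beta)^{-1}\bigl(f^{\bl}(\alpha)-f^{\bl}(\beta)\bigr)$ and hence $f_0=f^{\bl}(\alpha)-\alpha f_1$; substituting into $f^{\bl}(\gamma)=f_0+\gamma f_1=f^{\bl}(\alpha)+(\gamma-\alpha)f_1$ and simplifying using $\gamma-\alpha=(\gamma-\beta)-(\alpha-\beta)$ yields exactly the coefficients $(\gamma-\beta)(\alpha-\beta)^{-1}$ and $(\alpha-\gamma)(\alpha-\beta)^{-1}$ in \eqref{3.1}. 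Formula \eqref{3.2} then comes from the same $f_0,f_1$ by computing $f^{\br}(\gamma)=f_0+f_1\gamma=\bigl(f^{\bl}(\alpha)-\alpha f_1\bigr)+f_1\gamma$ and inserting the expression for $f_1$; the four terms in \eqref{3.2} are precisely $f_1\gamma$, $-\alpha f_1$ rewritten, etc. Here one must be careful never to commute quaternions past each other, so the bookkeeping of left versus right multiplication is the only delicate point.

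Third, identities \eqref{3.3} and \eqref{3.4} are proved by the mirror-image argument: apply the already-established \eqref{3.1}, \eqref{3.2} to the conjugate polynomial $f^\sharp$ and use that $(f^\sharp)^{\bl}(\eta)=\overline{f^{\br}(\overline\eta)}$ together with $\overline\alpha,\overline\beta,\overline\gamma$ again being three distinct equivalent quaternions; alternatively, and perhaps more transparently, just repeat the linear-algebra computation starting from $f^{\br}(\alpha)=f_0+f_1\alpha$, $f^{\br}(\beta)=f_0+f_1\beta$, which gives $f_1=\bigl(f^{\br}(\alpha)-f^{\br}(\beta)\bigr)(\alpha-\beta)^{-1}$ and then $f^{\br}(\gamma)=f^{\br}(\alpha)+f_1(\gamma-\alpha)$, yielding \eqref{3.3}, and $f^{\bl}(\gamma)=f_0+\gamma f_1=f^{\br}(\alpha)-f_1\alpha+\gamma f_1$, yielding \eqref{3.4}.

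The only genuine obstacle is checking that the reduction step is legitimate, i.e. that left/right division by $\cX_{[\alpha]}$ really does kill the evaluation at all three of $\alpha,\beta,\gamma$; this is where Lemma \ref{L:2.6} together with \eqref{2.6}--\eqref{2.7} is needed, and it is the reason the identities hold for arbitrary $f\in\bH[z]$ and not merely for linear $f$. Everything after that is a finite, commutativity-sensitive but otherwise routine, manipulation.
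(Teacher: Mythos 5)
Your proposal is correct and is essentially the paper's own argument recast slightly: the paper constructs the degree-one left interpolant $f^{\bl}(\alpha)+(z-\alpha)(\alpha-\beta)^{-1}\bigl(f^{\bl}(\alpha)-f^{\bl}(\beta)\bigr)$ explicitly and uses Lemma~\ref{L:2.6} to show the difference from $f$ is a multiple of $\cX_{[\alpha]}$, which is precisely your division-with-remainder step, and then evaluates at $\gamma$. Your subsequent $2\times 2$ linear-algebra bookkeeping (and the $f^\sharp$ symmetry for \eqref{3.3}--\eqref{3.4}) reproduces the same coefficients.
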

\begin{proof} The polynomial 
$f(z)-f^{\bl}(\alpha)-(z-\alpha)(\alpha-\beta)^{-1}(f^{\bl}(\alpha)-f^{\bl}(\beta))$
has left zeros at $\alpha$ and $\beta$ and hence, by Lemma \ref{L:2.6}
\begin{equation}
f(z)=f^{\bl}(\alpha)+(z-\alpha)(\alpha-\beta)^{-1}(f^{\bl}(\alpha)-f^{\bl}(\beta))+
\cX_{[\alpha]}(z)g(z)
\label{3.5}  
\end{equation}
for some $g\in\bH[z]$. Since $\gamma\in[\alpha]$, it holds that 
$(\cX_{[\alpha]}g)^{\bl}(\gamma)=
(\cX_{[\alpha]}g)^{\br}(\gamma)=0$ and then left and right evaluations of \eqref{3.5} at $c$
give 
\begin{align*}
f^{\bl}(\gamma)=&f^{\bl}(\alpha)+(\gamma-\alpha)(\alpha-\beta)^{-1}(f^{\bl}(\alpha)-f^{\bl}(\beta)),\\
f^{\br}(\gamma)=&f^{\bl}(\alpha)+(\alpha-\beta)^{-1}(f^{\bl}(\alpha)-f^{\bl}(\beta))\gamma\\
&-\alpha(\alpha-\beta)^{-1}(f^{\bl}(\alpha)-f^{\bl}(\beta)),
\end{align*}
which are equivalent to \eqref{3.1} and \eqref{3.2}, respectively. Relations \eqref{3.3}
and \eqref{3.4} are established similarly, by applying Lemma \ref{L:2.6}
to the polynomial 
$f(z)-f^{\br}(\alpha)-(f^{\br}(\alpha)-f^{\br}(\beta))(\alpha-\beta)^{-1}(z-\alpha)$.
\end{proof}
Lemma \ref{L:4.1} shows that left (or right) evaluations of $f\in\bH[z]$ at any two points
from the same conjugacy class uniquely determine left {\em and} right evaluations of $f$ at any point
in this conjugacy class. Thus, if the set  $\Lambda\bigcup \Omega$ contains more than two points from the same 
conjugacy class, the corresponding target values must satisfy certain conditions (outlined in Lemma \ref{L:4.1}) 
for the Lagrange problem to have a solution.

\smallskip

Let $V$ be a conjugacy class such that $V\bigcap
\Lambda=\{\alpha_{i_1},\alpha_{i_2},\ldots,\alpha_{i_k}\}$ contains at least 
 two elements and let $V\bigcap\Omega=\{\beta_{j_1},\ldots,\beta_{j_s}\}$.
For the assigned target values $c_{i_\ell}$  and $d_{j_r}$, we verify equalities (cf.
\eqref{3.1} and \eqref{3.2})
\begin{align}
c_{i_\ell}&=(\alpha_{i_\ell}-\alpha_{i_2})(\alpha_{i_1}-\alpha_{i_2})^{-1}c_{i_1}+
(\alpha_{i_1}-\alpha_{i_\ell})(\alpha_{i_1}-\alpha_{i_2})^{-1}c_{i_2},\label{3.6}\\
d_{j_r}&=(\alpha_{i_1}-\alpha_{i_2})^{-1}c_{i_1}\beta_{j_r}-
\alpha_{i_2}(\alpha_{i_1}-\alpha_{i_2})^{-1}c_{i_1}\notag\\
&\qquad+\alpha_{i_1}(\alpha_{i_1}-\alpha_{i_2})^{-1}c_{i_2}-(\alpha_{i_1}-\alpha_{i_2})^{-1}c_{i_2}
\beta_{j_r}\notag
\end{align}  
for $\ell=3,\ldots,k$ and $r=1,\ldots,s$. If at least one of them fails, the
Lagrange problem \eqref{1.8}, \eqref{1.9} does not have solutions, by Lemma \ref{L:4.1}.
Otherwise,  any polynomial $f\in\bH[z]$ satisfying
interpolation conditions \eqref{1.8} at $\alpha_{i_1}$ and $\alpha_{i_2}$ will satisfy
left interpolation conditions at $\alpha_{i_\ell}$ (for $\ell=3,\ldots,k$) and
right interpolation conditions at $\beta_{j_r}$ (for $r=1,\ldots,s$) automatically, again
by Lemma \ref{L:4.1}. Hence, removing interpolation conditions at these points
we get a reduced interpolation problem with the same solution set as the original one.
Alternatively, if $V\bigcap\Omega$ contains at least two elements $\beta_{j_1},\beta_{j_2}$,
we may use relations \eqref{3.3} and \eqref{3.4} to check if other interpolation conditions on
$V$ are compatible with those two at $\beta_{j_1}$ and $\beta_{j_2}$ and, if this is the case,
all other conditions can be removed without affecting the solution set of the problem.
After completing consistency verifications in all conjugacy classes having more than two
common elements with $\Lambda\bigcup\Omega$, we either conclude that the original problem is
inconsistent or reduce it to a problem for which 
$$
{\bf (A)} \quad \mbox{none three of the interpolation nodes are equivalent.}\qquad\qquad
$$
The latter assumption is all we need to handle one-sided interpolation problems.
\subsection{One-sided interpolation}
In case $\Omega=\emptyset$, all consistency equalities are of the form \eqref{3.6}.
Let us assume that all of them hold true and start with the left Lagrange problem 
satisfying the 
assumption ({\bf A}). 

\smallskip

Let $P_{\Lambda,{\boldsymbol\ell}}$ be the {\bf lmp} of $\Lambda$ and let
$P_{{\Lambda}_k,{\boldsymbol\ell}}$ be
the {\bf lmp}  of the set ${\Lambda}_k:=\Lambda\backslash\{\alpha_k\}$ for
$k=1,\ldots,n$. It follows from Theorem \ref{T:3.7} that under assumption ${\bf (A)}$,
\begin{equation}
\deg (P_{\Lambda,{\boldsymbol\ell}})=n\quad \mbox{and}\quad \deg
(P_{{\Lambda}_k,{\boldsymbol\ell}})=n-1 \quad (k=1,\ldots,n).
\label{3.7}   
\end{equation}
\begin{theorem} 
Assume that none three elements in the set $\Lambda=\{\alpha_1,\ldots,\alpha_n\}$ are equivalent.
All polynomials $f\in\bH[z]$ satisfying left interpolation conditions \eqref{1.8} are given by the
formula
\begin{equation}
f=\widetilde{f}_{\boldsymbol\ell}+P_{\Lambda,{\boldsymbol\ell}}h,\quad
\widetilde{f}_{\boldsymbol\ell}(z)=\sum_{k=1}^n P_{{\Lambda}_k,{\boldsymbol\ell}}(z)\cdot
P^{\bl}_{{\Lambda}_k,{\boldsymbol\ell}}(\alpha_k)^{-1}\cdot c_k,\quad h\in\bH[z]
\label{3.8}
\end{equation}
with free parameter $h\in\bH[z]$. The left Lagrange polynomial $\widetilde{f}_{\boldsymbol\ell}$ is a unique 
solution to the problem \eqref{1.8} of degree less than $n$.
\label{T:5.1b}
\end{theorem}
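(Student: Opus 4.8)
The plan is to mimic the classical complex argument, exploiting the fact that left evaluation is $\R$-linear on $\bH[z]$. First I would verify that $\widetilde{f}_{\boldsymbol\ell}$ actually solves \eqref{1.8}. Fix $k$ and left-evaluate the $j$-th summand $P_{\Lambda_j,{\boldsymbol\ell}}\cdot P^{\bl}_{\Lambda_j,{\boldsymbol\ell}}(\alpha_j)^{-1}\cdot c_j$ at $\alpha_k$. For $j\neq k$ we have $\alpha_k\in\Lambda_j$, so $\alpha_k$ is a left zero of $P_{\Lambda_j,{\boldsymbol\ell}}$, and by the product rule \eqref{2.6} (with $g=P_{\Lambda_j,{\boldsymbol\ell}}$, whose left value at $\alpha_k$ vanishes) the whole summand evaluates to $0$ at $\alpha_k$. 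For $j=k$, the product rule \eqref{2.6} gives $\bigl(P_{\Lambda_k,{\boldsymbol\ell}}\cdot P^{\bl}_{\Lambda_k,{\boldsymbol\ell}}(\alpha_k)^{-1}c_k\bigr)^{\bl}(\alpha_k)=P^{\bl}_{\Lambda_k,{\boldsymbol\ell}}(\alpha_k)\cdot\bigl(P^{\bl}_{\Lambda_k,{\boldsymbol\ell}}(\alpha_k)^{-1}c_k\bigr)^{\bl}\bigl(P^{\bl}_{\Lambda_k,{\boldsymbol\ell}}(\alpha_k)^{-1}\alpha_k P^{\bl}_{\Lambda_k,{\boldsymbol\ell}}(\alpha_k)\bigr)$, and since the constant polynomial $P^{\bl}_{\Lambda_k,{\boldsymbol\ell}}(\alpha_k)^{-1}c_k$ evaluates to itself everywhere, this collapses to $c_k$. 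Crucially $P^{\bl}_{\Lambda_k,{\boldsymbol\ell}}(\alpha_k)\neq 0$ under assumption $(\mathbf{A})$: by Remark \ref{R:2.6a}, if it were zero then $\alpha_k$ together with the existing left zeros $\Lambda_k$ would put a whole conjugacy class into $\cZ_{\boldsymbol\ell}$, forcing three equivalent nodes. So by $\R$-linearity of $f\mapsto f^{\bl}(\alpha_k)$ the sum evaluates to $c_k$, as needed.

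Next I would handle the homogeneous problem: the solution set of $f^{\bl}(\alpha_i)=0$ for all $i$ is exactly $\{P_{\Lambda,{\boldsymbol\ell}}\,h : h\in\bH[z]\}$. One inclusion is Remark \ref{R:2.5}/Lemma \ref{L:2.6} combined with \eqref{2.6}: if $f=P_{\Lambda,{\boldsymbol\ell}}h$ then every $\alpha_i\in\Lambda=\cZ_{\boldsymbol\ell}(P_{\Lambda,{\boldsymbol\ell}})$ is a left zero of the product. Conversely, a homogeneous solution $f$ lies in the right ideal $\mathbb{I}_{\Lambda,{\boldsymbol\ell}}$ of \eqref{2.11}, which by the discussion of minimal polynomials is generated (as a right ideal) by $P_{\Lambda,{\boldsymbol\ell}}$, i.e. $f=P_{\Lambda,{\boldsymbol\ell}}h$; this uses the left division algorithm in $\bH[z]$. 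Since $f\mapsto(f^{\bl}(\alpha_1),\ldots,f^{\bl}(\alpha_n))$ is $\R$-linear, the full solution set of \eqref{1.8} is $\widetilde{f}_{\boldsymbol\ell}+\mathbb{I}_{\Lambda,{\boldsymbol\ell}}$, which is \eqref{3.8}.

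Finally, for the uniqueness-of-a-low-degree-solution claim: $\widetilde{f}_{\boldsymbol\ell}$ has degree at most $n-1$ by \eqref{3.7}. If $f$ and $g$ are two solutions of degree less than $n$, then $f-g=P_{\Lambda,{\boldsymbol\ell}}h$ for some $h\in\bH[z]$; since $\deg P_{\Lambda,{\boldsymbol\ell}}=n$ by \eqref{3.7} and the leading coefficient of $P_{\Lambda,{\boldsymbol\ell}}$ is $1$ (so degrees add under this product), $h\neq 0$ would force $\deg(f-g)\geq n$, a contradiction; hence $h=0$ and $f=g$. I would remark that $\widetilde{f}_{\boldsymbol\ell}$ itself satisfies $\deg<n$, so a low-degree solution exists and is unique.

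The main obstacle, I expect, is being careful with the noncommutative product rule \eqref{2.6}: the quaternionic conjugation $\alpha_k\mapsto P^{\bl}_{\Lambda_k,{\boldsymbol\ell}}(\alpha_k)^{-1}\alpha_k P^{\bl}_{\Lambda_k,{\boldsymbol\ell}}(\alpha_k)$ appearing there is harmless only because the trailing factor $P^{\bl}_{\Lambda_k,{\boldsymbol\ell}}(\alpha_k)^{-1}c_k$ is a \emph{constant} polynomial, whose left value is independent of the point; one must spell this out rather than treat evaluation as multiplicative. The other point requiring care is justifying $P^{\bl}_{\Lambda_k,{\boldsymbol\ell}}(\alpha_k)\neq 0$, which is precisely where hypothesis $(\mathbf{A})$ enters and where Remark \ref{R:2.6a} (equivalently the degree count \eqref{3.7}) does the work.
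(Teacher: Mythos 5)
Your proof is correct and takes essentially the same route as the paper: verify $\widetilde{f}_{\boldsymbol\ell}$ satisfies \eqref{1.8} (you spell out via \eqref{2.6} what the paper calls ``easily verified''; note that the simpler identity $(g\cdot c)^{\bl}(\alpha)=g^{\bl}(\alpha)\cdot c$ for a constant $c$ already suffices and avoids the conjugation in the product rule), identify the homogeneous solution set with the right ideal ${\mathbb I}_{\Lambda,{\boldsymbol\ell}}=P_{\Lambda,{\boldsymbol\ell}}\cdot\bH[z]$, and obtain uniqueness from the degree count \eqref{3.7}. The paper argues $P^{\bl}_{\Lambda_k,{\boldsymbol\ell}}(\alpha_k)\neq 0$ via the same appeal to assumption $(\mathbf{A})$, so there is no substantive divergence.
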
 
\begin{proof}
The polynomial $P_{{\Lambda}_k,{\boldsymbol\ell}}$ left-vanishes on $\Lambda_k$, i.e., 
$P^{\bl}_{{\Lambda}_k,{\boldsymbol\ell}}(\alpha_i)=0 $ for $i\neq k$. On the other hand,
$P^{\bl}_{{\Lambda}_k,{\boldsymbol\ell}}(\alpha_k)\neq 0$, since otherwise, the
set $\widetilde{\Lambda}_k$ contains at least two elements equivalent to $\alpha_k$ which contradicts the
assumption ${\bf (A)}$. Now it is easily verified that the polynomial $\widetilde{f}_{\boldsymbol\ell}$ 
defined as in \eqref{3.8} satisfies conditions \eqref{1.8}. Due to \eqref{3.7}, we have 
$\deg (\widetilde{f}_{\boldsymbol\ell})<n$. A polynomial $f$  satisfies conditions \eqref{1.8}
if and only if $\cZ_{\boldsymbol\ell}(f-\widetilde{f}_{\boldsymbol\ell})\supset\Lambda$, i.e., 
if and only if $f-\widetilde{f}_{\boldsymbol\ell}$ belongs to the right ideal ${\mathbb 
I}_{\Lambda,\boldsymbol \ell}$ \eqref{2.11}, which means that 
$f=\widetilde{f}_{\boldsymbol\ell}+P_{\Lambda,{\boldsymbol\ell}}h$ for some $h\in\bH[z]$.
Finally, if $h\not\equiv 0$, then $\deg(f)\ge \deg (P_{\Lambda,{\boldsymbol\ell}})=n>
\deg(\widetilde{f}_{\boldsymbol\ell})$.
\end{proof}
The right-sided problem is handled in much the same way. Let $P^{\br}_{\Omega,{\bf r}}$ be the {\bf rmp} of 
$\Omega=\{\beta_1,\ldots,\beta_m\}$ and let $P^{\br}_{{\Omega}_k,{\bf r}}$ denote the 
{\bf rmp} of the set ${\Omega}_j:=\Omega\backslash\{\beta_j\}$ for $j=1,\ldots,m$.
Under assumption ${\bf (A)}$, we have 
\begin{equation}
\deg (P_{\Omega,{\bf r}})=m\quad \mbox{and}\quad \deg (P_{{\Omega}_j,{\bf r}})=m-1 \quad 
(j=1,\ldots,m).
\label{3.9}  
\end{equation}
\begin{theorem}
Assume that none three elements in the set $\Omega=\{\beta_1,\ldots,\beta_m\}$ are equivalent.
Then all $f\in\bH[z]$ satisfying right conditions \eqref{1.9} are given by the
formula
\begin{equation}
f=\widetilde{f}_{\bf r}+hP_{\Lambda,{\bf r}},\quad \widetilde{f}_{\bf r}(z)=\sum_{k=1}^m d_k\cdot 
P^{\br}_{{\Omega}_k,{\bf r}}(\beta_k)^{-1}\cdot
P_{{\Omega}_k,{\bf r}}(z),\quad h\in\bH[z]
\label{3.10}
\end{equation}
with free parameter $h\in\bH[z]$. The right Lagrange polynomial $\widetilde{f}_{\bf r}$ is a unique
solution to the problem \eqref{1.9} of degree less than $m$.
\label{T:5.1a}  
\end{theorem}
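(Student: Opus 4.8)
The plan is to mirror the proof of Theorem~\ref{T:5.1b} verbatim, with the roles of left and right evaluations interchanged. First I would observe that since $\Omega=\{\beta_1,\ldots,\beta_m\}$ satisfies assumption ${\bf (A)}$, the right-sided analogue of Theorem~\ref{T:3.7} (obtained from \eqref{2.13}) gives \eqref{3.9}, so that $P_{\Omega,{\bf r}}$ is monic of degree $m$ and each $P_{\Omega_k,{\bf r}}$ is monic of degree $m-1$. Next I would record that $P^{\br}_{\Omega_k,{\bf r}}(\beta_i)=0$ for $i\neq k$ because $\beta_i\in\Omega_k\subset \cZ_{\bf r}(P_{\Omega_k,{\bf r}})$, while $P^{\br}_{\Omega_k,{\bf r}}(\beta_k)\neq 0$: if it vanished, then $\Omega_k\cup\{\beta_k\}=\Omega$ would have $\beta_k$ as a right zero of a polynomial of degree $m-1$ whose right zero set already contains the $m-1$ non-equivalent nodes $\Omega_k$, forcing (via Lemma~\ref{L:2.6} and \eqref{2.7}, exactly as in the left-sided argument) two elements of $\Omega_k$ equivalent to $\beta_k$, contradicting ${\bf (A)}$. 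Hence the coefficients $P^{\br}_{\Omega_k,{\bf r}}(\beta_k)^{-1}$ in \eqref{3.10} are well defined.

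Then I would verify directly that $\widetilde f_{\bf r}$ defined in \eqref{3.10} satisfies \eqref{1.9}. Right-evaluating a sum is additive, so $\widetilde f_{\bf r}^{\,\br}(\beta_j)=\sum_{k=1}^m \big(d_k\cdot P^{\br}_{\Omega_k,{\bf r}}(\beta_k)^{-1}\cdot P_{\Omega_k,{\bf r}}\big)^{\br}(\beta_j)$; using \eqref{2.7} with the constant left factor $d_k P^{\br}_{\Omega_k,{\bf r}}(\beta_k)^{-1}$ pulled out (a constant polynomial right-evaluates to itself, and multiplication by a nonzero constant on the left behaves as in \eqref{2.7} with the conjugation by that constant), each term equals $d_k\cdot P^{\br}_{\Omega_k,{\bf r}}(\beta_k)^{-1}\cdot P^{\br}_{\Omega_k,{\bf r}}(\beta_j)$, which is $0$ for $j\neq k$ and $d_j$ for $j=k$. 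Thus $\widetilde f_{\bf r}^{\,\br}(\beta_j)=d_j$ for all $j$, and $\deg(\widetilde f_{\bf r})<m$ by \eqref{3.9}.

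For the parametrization, I would note that $f$ satisfies \eqref{1.9} if and only if $(f-\widetilde f_{\bf r})^{\br}(\beta_j)=0$ for all $j$, i.e. $\Omega\subset\cZ_{\bf r}(f-\widetilde f_{\bf r})$, i.e. $f-\widetilde f_{\bf r}\in{\mathbb I}_{\Omega,{\bf r}}$, the \emph{left} ideal from \eqref{2.11} generated by the {\bf rmp} $P_{\Omega,{\bf r}}$; hence $f-\widetilde f_{\bf r}=hP_{\Omega,{\bf r}}$ for some $h\in\bH[z]$, which is \eqref{3.10}. Conversely every such $f$ satisfies \eqref{1.9} since right zeros are preserved under left multiplication (the right-sided version of the remark following \eqref{2.5}). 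Finally, uniqueness of the low-degree solution: if $h\not\equiv 0$ then $\deg(hP_{\Omega,{\bf r}})\geq\deg(P_{\Omega,{\bf r}})=m>\deg(\widetilde f_{\bf r})$, so $\deg(f)\geq m$; thus $\widetilde f_{\bf r}$ is the only solution of degree less than $m$. The only place demanding a little care is the product rule bookkeeping in the second paragraph — making sure that \eqref{2.7} applied to $P_{\Omega_k,{\bf r}}$ left-multiplied by the nonzero constant $d_k P^{\br}_{\Omega_k,{\bf r}}(\beta_k)^{-1}$ really does collapse to the clean value $d_k P^{\br}_{\Omega_k,{\bf r}}(\beta_k)^{-1}P^{\br}_{\Omega_k,{\bf r}}(\beta_j)$; everything else is a routine transcription of the left-sided proof.
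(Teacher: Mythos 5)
Your proof is correct and is precisely the left--right transcription of the paper's proof of Theorem~\ref{T:5.1b} that the paper itself omits with the remark that the right-sided problem is ``handled in much the same way.'' The one step you flag for care --- right-evaluating $c\,P_{\Omega_k,{\bf r}}$ at $\beta_j$ for a constant $c$ --- is indeed unproblematic, and even simpler than an appeal to \eqref{2.7}: by \eqref{1.6}, left multiplication by a constant commutes with right evaluation, $(cP)^{\br}(\beta)=c\,P^{\br}(\beta)$; note also that you correctly write $hP_{\Omega,{\bf r}}$ where the displayed formula \eqref{3.10} has the typo $hP_{\Lambda,{\bf r}}$.
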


\section{Backward shift operators and Sylvester equations}
\setcounter{equation}{0}
With any $\alpha\in\bH$, we can associate linear operators
$L_\alpha$ and $R_\alpha$
(left and right backward shifts) acting on $\bH[z]$ (which is now considered as
a linear vector space over $\bH$):
\begin{align}
L_\alpha: \; f(z)=\sum_{k=0}^n z^k f_k\mapsto \sum_{k=0}^{n-1}\left(\sum_{j=0}^{n-k-1}
\alpha^jf_{k+j+1}\right)z^k,\label{4.1}\\
R_\alpha: \; f(z)=\sum_{k=0}^n z^k f_k\mapsto \sum_{k=0}^{n-1}\left(\sum_{j=0}^{n-k-1}
f_{k+j+1}\alpha^j\right)z^k.\label{4.1a}
\end{align}
The terminology is partly justified by the fact that $L_\alpha f$ and $R_\alpha f$ are
the unique polynomials such that
\begin{equation}
f(z)=f^{\bl}(\alpha)+(z-\alpha)\cdot(L_\alpha f)(z)=f^{\br}(\alpha)+(R_\alpha f)(z)\cdot(z-\alpha).
\label{4.2}
\end{equation}
It follows directly from \eqref{4.1} that for any $\alpha,\beta\in\bH$,
\begin{equation}
(L_\alpha f)^{\br}(\beta)=(R_\beta f)^{\bl}(\alpha)=
\sum_{k=0}^{n-1}\sum_{j=0}^{n-k-1} \alpha^kf_{k+j+1}\beta^{k-j}.
\label{4.3}
\end{equation}
\begin{remark}
For $f\in\bH[z]$ and $\alpha, \beta\in\bH$,
\begin{equation}
\alpha \cdot(L_\alpha f)^{\br}(\beta)-(L_\alpha f)^{\br}(\beta)\cdot\beta=f^{\bl}(\alpha)-f^{\br}(\beta).
\label{4.4}
\end{equation}
\label{R:4.4a}
\end{remark} 
Indeed, the right evaluation at $z=\beta$ applied to the first representation in \eqref{4.2} 
gives
$f^{\br}(\beta)=f^{\bl}(\alpha)+(L_\alpha f)^{\br}(\beta)\cdot\beta-\alpha \cdot(L_\alpha f)^{\br}(\beta)$
which is equivalent to \eqref{4.4}. Our next objective is to determine to what extent the value
of $(L_\alpha f)^{\br}(\beta)$ can be recovered from the equality \eqref{4.4}. The next 
result is known.
\begin{lemma}
Given two non-equivalent $\alpha,\beta\in\bH$, the Sylvester equation
\begin{equation}
\alpha q-q\beta=\Delta
\label{4.5}
\end{equation}
has a unique solution $q=(\overline{\alpha}\Delta-\Delta\beta)\cX_{[\alpha]}(\beta)^{-1}$ for 
any $\Delta\in\bH$. Consequently, if $f\in\bH[z]$ and $\alpha\not\sim\beta$, then 
\begin{equation}
(L_\alpha f)^{\br}(\beta)=(\overline{\alpha}(f^{\bl}(\alpha)-f^{\br}(\beta))-
(f^{\bl}(\alpha)-f^{\br}(\beta))\beta)\cX_{[\alpha]}(\beta)^{-1}.
\label{4.6}
\end{equation}
\label{L:6.1} 
\end{lemma}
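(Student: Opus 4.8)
The plan is to establish the two assertions of Lemma~\ref{L:6.1} in sequence, the first being a purely algebraic fact about quaternions and the second following by specialization.

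First I would prove that the Sylvester equation \eqref{4.5} has the claimed unique solution. The key observation is that the map $q\mapsto \alpha q-q\beta$ is $\R$-linear on the $4$-dimensional real vector space $\bH$, so it suffices to show this map is injective (equivalently, that its only kernel element is $q=0$) and then verify that the proposed formula indeed solves the equation. For injectivity, suppose $\alpha q=q\beta$ with $q\neq0$; then $q^{-1}\alpha q=\beta$, which forces $\alpha\sim\beta$, contradicting the hypothesis. To verify the formula, I would substitute $q=(\overline{\alpha}\Delta-\Delta\beta)\cX_{[\alpha]}(\beta)^{-1}$ into $\alpha q-q\beta$ and compute, using that $\cX_{[\alpha]}(\beta)=\beta^2-2\,{\rm Re}(\alpha)\beta+|\alpha|^2$ is a real scalar (hence central, and nonzero since $\alpha\not\sim\beta$ means $\beta\notin[\alpha]$), together with the identity $\alpha\overline{\alpha}=\overline{\alpha}\alpha=|\alpha|^2$ and $\alpha+\overline{\alpha}=2\,{\rm Re}(\alpha)$. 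The computation reduces to checking that $\alpha\overline{\alpha}\Delta-\alpha\Delta\beta-\overline{\alpha}\Delta\beta+\Delta\beta^2$ equals $\Delta\cX_{[\alpha]}(\beta)=\Delta\beta^2-2\,{\rm Re}(\alpha)\Delta\beta+|\alpha|^2\Delta$, i.e.\ that $(\alpha+\overline{\alpha})\Delta\beta=2\,{\rm Re}(\alpha)\Delta\beta$, which holds since $\alpha+\overline{\alpha}$ is the real scalar $2\,{\rm Re}(\alpha)$.

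Next I would deduce \eqref{4.6}. By Remark~\ref{R:4.4a}, the quaternion $q=(L_\alpha f)^{\br}(\beta)$ satisfies the Sylvester equation \eqref{4.5} with right-hand side $\Delta=f^{\bl}(\alpha)-f^{\br}(\beta)$. Since $\alpha\not\sim\beta$, the first part of the lemma gives the unique solution, and substituting this particular $\Delta$ yields exactly \eqref{4.6}.

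I do not expect a genuine obstacle here: both parts are short. The only point requiring care is the verification that the closed-form $q$ solves \eqref{4.5}, where one must correctly exploit that $\cX_{[\alpha]}(\beta)$ is real and central and keep track of the non-commutative products $\Delta\beta$, $\alpha\Delta$; the potential pitfall is mishandling the order of factors, but the centrality of $\cX_{[\alpha]}(\beta)$ and of ${\rm Re}(\alpha)$, together with $\alpha\overline{\alpha}=|\alpha|^2$, makes everything collapse cleanly.
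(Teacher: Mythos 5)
Your overall plan (uniqueness via injectivity of the $\R$-linear map $q\mapsto\alpha q-q\beta$, existence via direct verification of the closed form, then specialization through Remark \ref{R:4.4a}) is sound, and the second half is exactly what the paper does. But there is one genuinely false assertion at the crux of your verification: $\cX_{[\alpha]}(\beta)$ is \emph{not} a real scalar, hence not central in $\bH$. The polynomial $\cX_{[\alpha]}$ has real coefficients, but its value $\cX_{[\alpha]}(\beta)=\beta^2-2\,{\rm Re}(\alpha)\beta+|\alpha|^2$ at a quaternion $\beta$ has imaginary part $2\left({\rm Re}(\beta)-{\rm Re}(\alpha)\right){\rm Im}(\beta)$, which is nonzero as soon as $\beta\notin\R$ and ${\rm Re}(\beta)\neq{\rm Re}(\alpha)$; for instance $\alpha={\bf j}$, $\beta=1+{\bf i}$ gives $\cX_{[\alpha]}(\beta)=1+2{\bf i}$. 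So you cannot justify rearrangements by "centrality of $\cX_{[\alpha]}(\beta)$" — the very pitfall you flag is the one your justification falls into.

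The verification is nevertheless salvageable, because the only commutation actually required is $\cX_{[\alpha]}(\beta)^{-1}\beta=\beta\,\cX_{[\alpha]}(\beta)^{-1}$ (needed when forming $q\beta$), and this holds for a different reason: $\cX_{[\alpha]}(\beta)$ is a real-coefficient polynomial evaluated at $\beta$ and therefore commutes with $\beta$ (though not with $\alpha$ or $\Delta$). Writing $\cX=\cX_{[\alpha]}(\beta)$, one then gets
\begin{equation*}
\alpha q-q\beta=\left(\alpha\overline{\alpha}\Delta-\alpha\Delta\beta-\overline{\alpha}\Delta\beta+\Delta\beta^2\right)\cX^{-1}
=\left(|\alpha|^2\Delta-2\,{\rm Re}(\alpha)\Delta\beta+\Delta\beta^2\right)\cX^{-1}=\Delta\,\cX\,\cX^{-1}=\Delta,
\end{equation*}
where only the reality of $|\alpha|^2$ and $2\,{\rm Re}(\alpha)$ is used to pull $\Delta$ to the left. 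For comparison, the paper avoids the verification altogether: it multiplies \eqref{4.5} by $\overline{\alpha}$ on the left and by $\beta$ on the right, subtracts, and obtains $q\,\cX_{[\alpha]}(\beta)=\overline{\alpha}\Delta-\Delta\beta$ directly — note that the scalar correctly lands on the \emph{right} of $q$, which is consistent with its non-centrality and explains why the formula in the statement carries $\cX_{[\alpha]}(\beta)^{-1}$ as a right factor.
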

\begin{proof}
Multiplying both parts of \eqref{4.5} by $\overline{\alpha}$ on the left and by  $\beta$ on the 
right gives
$$
|\alpha|^2q-\overline{\alpha}q\beta=\overline{\alpha}\Delta\quad\mbox{and}\quad
\alpha q\beta-q\beta^2=\Delta\beta,
$$
respectively. Subtracting the second equation from the first and commuting real coefficients 
we get
\begin{align}
\overline{\alpha}\Delta-\Delta\beta&=
|\alpha|^2q-(\alpha+\overline{\alpha})q\beta+q\beta^2\notag\\
&=q(|\alpha|^2-2\beta{\rm Re}(\alpha)+\beta^2)=q\cX_{[\alpha]}(\beta),\label{4.7}
\end{align}
and the desired formula for $q$ follows since $\alpha\not\sim\beta$ and therefore, 
$\cX_{[\alpha]}(\beta)\neq 0$. Applying this formula to the equation \eqref{4.4}
(i.e., for $\Delta=f^{\bl}(\alpha)-f^{\br}(\beta)$), we get \eqref{4.6}.
\end{proof}
The case where $\alpha$ and $\beta$ are equivalent is more interesting.
Let us denote by $\mathbb S$ the unit sphere of
purely imaginary quaternions. Any $I\in\mathbb S$ is such that $I^2=-1$. Recall that
if $\alpha$ and $\beta$ are two equivalent quaternions, then due to characterization \eqref{2.1},
they can be written in the form
\begin{equation}
\alpha=x+y I,\quad \beta=x+y \widetilde{I} \qquad (x\in\R, \; y>0, \; I, \widetilde{I}\in\mathbb S).
\label{4.8}
\end{equation}
Since $\bH$ is a (four-dimensional) vector space over $\R$, we may define
orthogonal complements with respect to the usual euclidean metric in $\R^4$.
For $\alpha$ and $\beta$ as in \eqref{4.8}, we define the plane (the two-dimensional
subspace of $\bH\cong\R^4$) $\Pi_{\alpha,\beta}$ via the formula
\begin{equation}
\Pi_{\alpha,\beta}=\left\{\begin{array}{lll}
span \{1,I\}=\{u+vI: \, u,v\in\R\},&\mbox{if}& \beta=\alpha,\\
\left(span \{1,I\}\right)^\perp,&\mbox{if}& \beta=\overline{\alpha},\\ 
span \{I+\widetilde{I}, \;  1-I\widetilde{I}\},&\mbox{if}& \beta\neq \alpha,\overline{\alpha}.  
\end{array}\right.
\label{4.9}
\end{equation}
Since $\overline{\alpha}=x-y I$, it follows that
$\Pi_{\overline{\alpha},\overline{\alpha}}=\Pi_{\alpha,\alpha}$,
$\Pi_{\overline{\alpha},\alpha}=\Pi_{\alpha, \overline{\alpha}}$ and
\begin{equation}
\Pi_{\overline{\alpha},\beta}=span \{I-\widetilde{I}, \;  1+I\widetilde{I}\}\quad
\mbox{if}\quad\beta\neq \alpha,\overline{\alpha}.
\label{4.10}
\end{equation}
\begin{lemma}
Let $\alpha\sim\beta$ be of the form \eqref{4.8}. Then the Sylvester equation \eqref{4.2} has a
solution
if and only if $\Delta\in\Pi_{\overline{\alpha},\beta}$ or equivalently, if and only if
\begin{equation}
\overline{\alpha}\Delta=\Delta\beta.
\label{4.11}
\end{equation}
If this is the case, the solution set for the equation \eqref{4.5} is the affine plane
$$
(2{\rm Im}(\alpha))^{-1}\Delta+\Pi_{\alpha,\beta}=-\Delta(2{\rm
Im}(\beta))^{-1}+\Pi_{\alpha,\beta}.
$$
\label{L:6.2}
\end{lemma}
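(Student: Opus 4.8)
The plan is to view $T\colon q\mapsto\alpha q-q\beta$ as an $\R$-linear map on the four-dimensional real space $\bH$, to determine precisely when $\Delta$ lies in its range, and to identify $\ker T$ with $\Pi_{\alpha,\beta}$; the solvability criterion and the affine-plane description of the solution set then follow from the coset structure of solution sets of linear equations, together with one explicitly produced particular solution.

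First I would extract the necessary condition. The computation in the proof of Lemma~\ref{L:6.1} that produced \eqref{4.7}, namely $\overline\alpha\Delta-\Delta\beta=q\,\cX_{[\alpha]}(\beta)$, nowhere used $\alpha\not\sim\beta$; since $\alpha\sim\beta$ forces $\cX_{[\alpha]}(\beta)=\cX_{[\beta]}(\beta)=0$, every solution $q$ of $\alpha q-q\beta=\Delta$ must satisfy $\overline\alpha\Delta=\Delta\beta$. Because $\alpha+\overline\alpha=2\,{\rm Re}(\alpha)=2\,{\rm Re}(\beta)=\beta+\overline\beta$ lies in $\R$, this is equivalent to $\alpha\Delta=\Delta\overline\beta$; and writing $\alpha=x+yI$, $\beta=x+y\widetilde I$ as in \eqref{4.8}, the condition $\overline\alpha\Delta=\Delta\beta$ reads $-I\Delta=\Delta\widetilde I$. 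Thus everything comes down to understanding the solution space of a one-sided equation $\gamma q=q\delta$ for equivalent $\gamma,\delta$, which I would isolate as a sublemma.

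Sublemma: for $\gamma=x+yI\sim\delta=x+y\widetilde I$ with $y>0$, the real subspace $N=\{q\in\bH:\gamma q=q\delta\}$ (which equals $\{q:Iq=q\widetilde I\}$, as cancelling $x$ and the positive $y$ shows) coincides with the plane $\Pi_{\gamma,\delta}$ of \eqref{4.9} and has dimension $2$. For $\Pi_{\gamma,\delta}\subseteq N$ one verifies by hand that the two generators listed in \eqref{4.9} satisfy $Iq=q\widetilde I$ in each of the three cases $\delta=\gamma$, $\delta=\overline\gamma$, $\delta\neq\gamma,\overline\gamma$; these generators are $\R$-linearly independent (in the first two cases visibly; in the generic case because $I+\widetilde I$ is purely imaginary while ${\rm Re}(1-I\widetilde I)=1+\langle I,\widetilde I\rangle\neq0$, since $\langle I,\widetilde I\rangle=-1$ would give $\widetilde I=-I$, i.e.\ $\delta=\overline\gamma$, which is excluded there), so $\dim\Pi_{\gamma,\delta}=2$. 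For the reverse inclusion pick any nonzero $q_*\in N$ (one exists since $\Pi_{\gamma,\delta}\subseteq N$); then for every $q\in N$ the element $u=qq_*^{-1}$ satisfies $Iu=uI$, hence lies in the two-dimensional centralizer ${\rm span}_\R\{1,I\}$ of $I$, so $N\subseteq\{uq_*:u\in{\rm span}_\R\{1,I\}\}={\rm span}_\R\{q_*,Iq_*\}$ is at most two-dimensional and therefore equals $\Pi_{\gamma,\delta}$. This sublemma --- in fact only its upper bound $\dim N\le2$ --- is the one genuinely nontrivial point of the proof; a shorter route to it is to complexify, noting that $q\mapsto Iq$ and $q\mapsto q\widetilde I$ are commuting $\R$-linear operators whose squares are $-{\rm id}$, so that $\bH\otimes_\R\C$ decomposes into four joint eigenlines and $N\otimes_\R\C$ is the sum of the two on which the two eigenvalues agree.

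With the sublemma in place the remainder is bookkeeping. Applying it to $(\gamma,\delta)=(\alpha,\beta)$ gives $\ker T=\Pi_{\alpha,\beta}$, and applying it to $(\gamma,\delta)=(\overline\alpha,\beta)$ (admissible since $\overline\alpha\sim\beta$) gives $\{\Delta:\overline\alpha\Delta=\Delta\beta\}=\Pi_{\overline\alpha,\beta}$. The necessity step already showed that solvability implies $\overline\alpha\Delta=\Delta\beta$, i.e.\ $\Delta\in\Pi_{\overline\alpha,\beta}$. Conversely, assuming $\overline\alpha\Delta=\Delta\beta$, I would check that $q_0=(2\,{\rm Im}(\alpha))^{-1}\Delta$ is a solution: writing $w=2\,{\rm Im}(\alpha)=\alpha-\overline\alpha$ and expanding $\alpha={\rm Re}(\alpha)+\frac12w$, $\overline\alpha={\rm Re}(\alpha)-\frac12w$ yields the identity $\alpha w^{-1}-w^{-1}\overline\alpha=1$, so that $\alpha q_0-q_0\beta=\alpha w^{-1}\Delta-w^{-1}(\Delta\beta)=(\alpha w^{-1}-w^{-1}\overline\alpha)\Delta=\Delta$, where $\Delta\beta=\overline\alpha\Delta$ was used. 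By the symmetric computation (using $\alpha\Delta=\Delta\overline\beta$ and the analogous identity $v^{-1}\beta-\overline\beta v^{-1}=1$ for $v=2\,{\rm Im}(\beta)$), $q_1=-\Delta(2\,{\rm Im}(\beta))^{-1}$ is also a solution. Hence the equation is solvable exactly when $\overline\alpha\Delta=\Delta\beta$, equivalently $\Delta\in\Pi_{\overline\alpha,\beta}$; and when it is, its solution set is the coset $q_0+\ker T=q_0+\Pi_{\alpha,\beta}$, which also contains $q_1$, so it equals $q_0+\Pi_{\alpha,\beta}=q_1+\Pi_{\alpha,\beta}$ --- exactly the affine plane in the statement.
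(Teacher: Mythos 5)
Your proof is correct, and while its skeleton matches the paper's (necessity of \eqref{4.11} via the computation \eqref{4.7}, one explicit particular solution, and identification of the kernel of $q\mapsto\alpha q-q\beta$ with $\Pi_{\alpha,\beta}$), the decisive step --- the upper bound $\dim\ker\le 2$ --- is handled by a genuinely different argument. The paper splits into cases: for $\beta=\alpha$ and $\beta=\overline\alpha$ it computes directly in the orthonormal basis $\{1,I,J,IJ\}$, and for $\beta\neq\alpha,\overline\alpha$ it exhibits two independent solutions of each of $\alpha p=p\beta$ and $\overline\alpha p=p\beta$, notes the two solution spaces meet only in $0$, and squeezes both dimensions to exactly $2$ inside $\R^4$. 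Your centralizer argument (any nonzero $q_*\in N$ gives $N\subseteq C(I)\,q_*$ with $C(I)={\rm span}_\R\{1,I\}$) treats all three cases uniformly and isolates the real content --- that the centralizer of a non-real quaternion is two-dimensional --- in a single clean sublemma; the paper's route is more computational but entirely self-contained and simultaneously establishes the description of $\Pi_{\overline\alpha,\beta}$ in \eqref{4.10}, which it reuses. Your verification of the particular solution via the identity $\alpha w^{-1}-w^{-1}\overline\alpha=1$ for $w=\alpha-\overline\alpha$ is also a tidier substitute for the paper's computation $\alpha I\Delta-I\Delta\beta=-2y\Delta$ based on $\Delta=I\Delta\widetilde I$, and your separate check that $q_1=-\Delta(2\,{\rm Im}(\beta))^{-1}$ solves the equation is needed in your setup (the paper instead asserts $q_0=q_1$ directly from $\Delta=I\Delta\widetilde I$). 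All steps check out.
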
   
\begin{proof}
If $\alpha\sim\beta$, then
$\cX_{[\alpha]}(\beta)=0$, and calculation \eqref{4.7} shows that we necessarily have
\eqref{4.11}. Substituting \eqref{4.8} into \eqref{4.11} gives $-yI\Delta=y\Delta \widetilde{I}$,
which is equivalent (since $y\neq 0$ and $I^2=-1$) to $\Delta=I\Delta\widetilde{I}$. Therefore,
$$
\alpha I\Delta-I\Delta\beta=yI^2\Delta-yI\Delta \widetilde{I}=-2y\Delta
$$
and hence, the element $q_0=(2{\rm Im}(\alpha))^{-1}\Delta=-\Delta(2{\rm
Im}(\beta))^{-1}$ is a particular 
solution of the equation \eqref{4.5}. It 
remains to show that the solution set of the homogeneous Sylvester equation
\begin{equation}
\alpha p-p\beta=0
\label{4.12}
\end{equation}
coinsides with the plane $\Pi_{\alpha,\beta}$ defined in \eqref{4.9}. To this end, we first 
observe
that since $\R$ is the center of $\bH$, it follows that the solution set $\Omega_{\alpha,\beta}$
of the equation \eqref{4.12} is a (real) subspace of $\bH\cong\R^4$.
For a given $I\in\mathbb S$ and any $J\in\mathbb S$ which is orthogonal
to $I$ (as a vector in $\bH\cong\R^4$), the elements $\{{\bf 1}, I, J, IJ\}$
form an orthonormal basis in $\bH$ and therefore, any element $p\in\bH$ admits a unique
representation
\begin{equation}
p=x_0+x_1 I+x_2J+x_3 IJ \qquad (x_0,x_1,x_2,x_3\in\mathbb R)
\label{4.13}  
\end{equation}
similar to \eqref{1.4}. Observe that since $I,J\in\mathbb S$ and $I\perp J$, we have  
$IJ=-JI$.  We then use the latter equality along with representations \eqref{4.8} and 
\eqref{4.13} to
compute
$$
\alpha p-p\alpha=2y(x_2IJ-x_3J)\quad\mbox{and}\quad \alpha p-p\overline{\alpha}=2y(x_0I-x_1).
$$
Thus, $\alpha p=p\alpha$ if and only if $x_2=x_3=0$ and hence, $p\in span \{1,I\}$,  
and on the other hand, $\alpha p=p\overline{\alpha}$  if and only if $x_0=x_1=0$ and hence,
$p\in span \{J,IJ\}=\left(span \{1,I\}\right)^\perp$, which proves that $\Omega_{\alpha,\beta}$
indeed is equal to the plane $\Pi_{\alpha,\beta}$ for the cases where $\beta=\alpha$ or
$\beta=\overline{\alpha}$.
  
\smallskip

For the remaining case, we will argue as follows. Since $\beta\neq
\alpha,\overline{\alpha}$, representations \eqref{4.8} hold with $\widetilde{I}\neq \pm I$.
Letting $p_1=I+\widetilde{I}$ and $p_2=1-I\widetilde{I}$ we see that
\begin{align*}
\alpha p_1-p_1\beta&=y(I^2+I\widetilde{I}-I\widetilde{I}-\widetilde{I}^2)=0,\\
\alpha p_2-p_2\beta&=y(I-I^2\widetilde{I}-\widetilde{I}+I\widetilde{I}^2)=0,
\end{align*}
since $I^2=\widetilde{I}^2=-1$. Thus, $p_1$ and $p_2$ are linearly independent (over $\R$) solutions of
the equation \eqref{4.12} and therefore $\Omega_{\alpha,\beta}\supset \Pi_{\alpha,\beta}$, so 
that
$\dim \Omega_{\alpha,\beta}\ge \dim \Pi_{\alpha,\beta}=2$.

\smallskip

Similarly, one can verify that  $p_3=I-\widetilde{I}$ and $p_4=1+I\widetilde{I}$ are two linear
independent solutions to  the Sylvester equation $\overline{\alpha}p=p\beta$, the solution set
$\Omega_{\overline{\alpha},\beta}$ of which contains the plane $\Pi_{\overline{\alpha},\beta}$ 
(see \eqref{4.10}) and
therefore, is a subspace of $\bH$ of dimension of at least two. Observe that since $\alpha\neq
\overline{\alpha}$, equalities $\alpha p=p\beta=\overline{\alpha}p$ imply $p=0$.
Therefore $\Omega_{\alpha,\beta}\cap \Omega_{\overline{\alpha},\beta}=\{0\}$, and consequently,
$\dim \Omega_{\alpha,\beta}=\dim \Omega_{\overline{\alpha},\beta}=2$. Therefore,
$\Omega_{\alpha,\beta}=\Pi_{\alpha,\beta}$ and
$\Omega_{\overline{\alpha},\beta}=\Pi_{\overline{\alpha},\beta}$. In particular, $\Delta$   
is subject to condition \eqref{4.11} if and only if it belongs to 
$\Pi_{\overline{\alpha},\beta}$. 
\end{proof}
\begin{corollary}
For $f\in\bH[z]$ and $\alpha\sim\beta\in\bH$, 
$$\overline{\alpha}\left(f^{\bl}(\alpha)-f^{\br}(\beta)\right)=\left(f^{\bl}(\alpha)-
f^{\br}(\beta)\right)\beta.
$$
\label{C:4.4}
\end{corollary}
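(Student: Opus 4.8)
The plan is to read this off directly from Remark~\ref{R:4.4a} and Lemma~\ref{L:6.2}, with the Sylvester equation~\eqref{4.5} serving as the bridge. First I would set
\[
\Delta:=f^{\bl}(\alpha)-f^{\br}(\beta),\qquad q:=(L_\alpha f)^{\br}(\beta),
\]
and invoke identity~\eqref{4.4} of Remark~\ref{R:4.4a}, which says precisely that $\alpha q-q\beta=\Delta$. Thus the Sylvester equation~\eqref{4.5} with right-hand side $\Delta$ admits at least one solution, namely $q$.

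Next I would distinguish two cases. If $\alpha\in\R$, then $\alpha\sim\beta$ forces $\beta=\alpha$ (a real conjugacy class is a singleton), and by Remark~\ref{R:1.1} we get $f^{\bl}(\alpha)=f^{\br}(\alpha)$, so $\Delta=0$ and the claimed identity is trivial; alternatively, a real $\alpha$ is central, so $\overline{\alpha}\,\Delta=\alpha\Delta=\Delta\alpha=\Delta\beta$ with no computation at all. If $\alpha\notin\R$, then $\alpha\sim\beta$ lets me write $\alpha$ and $\beta$ in the form~\eqref{4.8} with $y>0$, so Lemma~\ref{L:6.2} applies: the solvability of~\eqref{4.5} just established is equivalent to $\Delta\in\Pi_{\overline{\alpha},\beta}$, equivalently to condition~\eqref{4.11}, namely $\overline{\alpha}\,\Delta=\Delta\beta$. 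Unwinding the definition of $\Delta$ yields exactly the asserted equality.

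I do not expect any genuine obstacle here; the corollary is essentially a repackaging of the two cited results. The only point requiring a moment's care is bookkeeping: Lemma~\ref{L:6.2} is phrased for equivalent quaternions presented as in~\eqref{4.8}, which tacitly presupposes $\alpha\notin\R$, so the real case must be noted separately — but, as indicated, it is immediate from centrality of the reals (or directly from Remark~\ref{R:1.1}, which makes $\Delta$ vanish).
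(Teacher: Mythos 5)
Your proof is correct and follows exactly the paper's route: apply Remark~\ref{R:4.4a} to see that $\Delta=f^{\bl}(\alpha)-f^{\br}(\beta)$ makes the Sylvester equation~\eqref{4.5} solvable, then invoke Lemma~\ref{L:6.2} to deduce~\eqref{4.11}. Your additional remark about the real case ($\alpha\in\R$) is a legitimate point of care — Lemma~\ref{L:6.2} is stated for $\alpha,\beta$ in the form~\eqref{4.8} with $y>0$, a hypothesis the paper's one-line proof silently skips — but it is a minor refinement of, not a departure from, the same argument.
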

\begin{proof}
Equality \eqref{4.4} tells us that for $\Delta:=f^{\bl}(\alpha)-f^{\br}(\beta)$, 
the Sylvester equation \eqref{4.5} has a solution. By Lemma \eqref{L:6.2}, equality 
\eqref{4.11} holds, which is the same as \eqref{4.8}, due to the present choice of $\Delta$.
\end{proof}
We may now present necessary and sufficient conditions for the  two-sided Lagrange problem 
to have a solution.
\begin{theorem}
Assume that none three elements of the set $\Lambda\bigcup\Omega$ are equivalent.
There is a polynomial $f\in\bH[z]$ satisfying conditions \eqref{1.8}, \eqref{1.9} if and only if 
$\overline{\alpha}_i\left(c_i-d_j\right)=\left(c_i-d_j\right)\beta_j$ for each pair 
$(\alpha_i,\beta_j)\in\Lambda\times\Omega$ of equivalent nodes.
\label{T:4.7}
\end{theorem}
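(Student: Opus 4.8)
\emph{Necessity} is immediate from Corollary \ref{C:4.4}: if $f\in\bH[z]$ solves \eqref{1.8}, \eqref{1.9} and $\alpha_i\sim\beta_j$, then applying that corollary to $f$ and using $f^{\bl}(\alpha_i)=c_i$, $f^{\br}(\beta_j)=d_j$ gives $\overline{\alpha}_i(c_i-d_j)=(c_i-d_j)\beta_j$.

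For \emph{sufficiency} I would argue as follows, using assumption ({\bf A}) and the compatibility relations. First solve the purely left-sided problem: by Theorem \ref{T:5.1b} there is $f_1\in\bH[z]$ with $f_1^{\bl}(\alpha_i)=c_i$ for $i=1,\dots,n$. Now write $\Omega_k=\Omega\setminus\{\beta_k\}$ and look for the solution in the form $f=f_1+\sum_{k=1}^m P_{\Lambda,{\boldsymbol\ell}}\,a_k\,P_{\Omega_k,{\bf r}}$ with constants $a_k\in\bH$ to be chosen. Since the correction term equals $P_{\Lambda,{\boldsymbol\ell}}\cdot\left(\sum_k a_kP_{\Omega_k,{\bf r}}\right)\in{\mathbb I}_{\Lambda,{\boldsymbol\ell}}$, it left-vanishes on $\Lambda$, so \eqref{1.8} holds for every choice of the $a_k$; and because $P_{\Omega_j,{\bf r}}$ right-vanishes at $\beta_k$ for $j\neq k$, formula \eqref{2.7} annihilates the $(j\neq k)$-terms under right evaluation at $\beta_k$. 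Thus \eqref{1.9} reduces to the $m$ separate equations
\[
(P_{\Lambda,{\boldsymbol\ell}}\,a_k\,P_{\Omega_k,{\bf r}})^{\br}(\beta_k)=d_k-f_1^{\br}(\beta_k)=:t_k,\qquad k=1,\dots,m .
\]

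It remains to solve each of these for $a_k$. One checks that $\rho_k:=P_{\Omega_k,{\bf r}}^{\br}(\beta_k)\neq0$ (by ({\bf A}) the class $[\beta_k]$ carries at most one node of $\Omega_k$, hence at most one right zero of $P_{\Omega_k,{\bf r}}$, necessarily different from $\beta_k$), and then a double application of \eqref{2.7} gives $(P_{\Lambda,{\boldsymbol\ell}}\,a\,P_{\Omega_k,{\bf r}})^{\br}(\beta_k)=\bigl(P_{\Lambda,{\boldsymbol\ell}}(a\rho_k)\bigr)^{\br}(\beta_k)$ for every $a\in\bH$; so, as $a$ ranges over $\bH$, the left-hand side of the $k$-th equation ranges exactly over the image of the $\R$-linear map $\Phi_k:b\mapsto(P_{\Lambda,{\boldsymbol\ell}}b)^{\br}(\beta_k)$. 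Under ({\bf A}) there are three cases, according to how $[\beta_k]$ meets $\Lambda$. If $[\beta_k]\cap\Lambda=\emptyset$, then $P_{\Lambda,{\boldsymbol\ell}}$ has no right zero in $[\beta_k]$ (its right zeros lie in $\cZ(P_{\Lambda,{\boldsymbol\ell}}P_{\Lambda,{\boldsymbol\ell}}^\sharp)$, a union of conjugacy classes each meeting $\Lambda$, by Theorems \ref{T:2.4} and \ref{T:3.7}), so $\Phi_k$ is injective on $\bH\cong\R^4$, hence bijective, and $t_k$ is attained. If $[\beta_k]\cap\Lambda=\{\alpha_{i_0}\}$, then $(P_{\Lambda,{\boldsymbol\ell}}b)^{\bl}(\alpha_{i_0})=P_{\Lambda,{\boldsymbol\ell}}^{\bl}(\alpha_{i_0})\,b=0$, so Corollary \ref{C:4.4} forces $\Phi_k(b)\in\Pi_{\overline{\alpha}_{i_0},\beta_k}$; conversely $\Phi_k(b)=0$ with $b\neq0$ forces $b\beta_kb^{-1}$ to be the unique right zero of $P_{\Lambda,{\boldsymbol\ell}}$ in $[\alpha_{i_0}]$ (unique by Remark \ref{R:2.6a} and Theorem \ref{T:3.7}), whence $\ker\Phi_k$ is a two-dimensional real subspace and the image of $\Phi_k$ is precisely the two-plane $\Pi_{\overline{\alpha}_{i_0},\beta_k}$; and $t_k$ lies in it, since Corollary \ref{C:4.4} applied to $f_1$ gives $\overline{\alpha}_{i_0}(c_{i_0}-f_1^{\br}(\beta_k))=(c_{i_0}-f_1^{\br}(\beta_k))\beta_k$, which subtracted from the hypothesis $\overline{\alpha}_{i_0}(c_{i_0}-d_k)=(c_{i_0}-d_k)\beta_k$ yields $\overline{\alpha}_{i_0}t_k=t_k\beta_k$, i.e.\ $t_k\in\Pi_{\overline{\alpha}_{i_0},\beta_k}$ by Lemma \ref{L:6.2}. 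Finally, if $[\beta_k]\cap\Lambda=\{\alpha',\alpha''\}$ with $\alpha'\neq\alpha''$, then applying Corollary \ref{C:4.4} to $f_1$ at the pairs $(\alpha',\beta_k)$ and $(\alpha'',\beta_k)$ and subtracting the corresponding two instances of the hypothesis gives $\overline{\alpha'}\,t_k=t_k\beta_k=\overline{\alpha''}\,t_k$, so $(\overline{\alpha'}-\overline{\alpha''})t_k=0$ and $t_k=0$; here $a_k=0$ works. In each case a suitable $a_k$ exists, and the resulting $f$ solves \eqref{1.8}, \eqref{1.9}.

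The crux is the second case above: one must verify that the correction $P_{\Lambda,{\boldsymbol\ell}}\,a\,P_{\Omega_k,{\bf r}}$, right-evaluated at $\beta_k$, sweeps out \emph{exactly} the two-dimensional plane $\Pi_{\overline{\alpha}_{i_0},\beta_k}$ --- no more, which is the rigidity of Corollary \ref{C:4.4}, and no less, which depends on knowing (from Theorem \ref{T:3.7} and Remark \ref{R:2.6a}) that $P_{\Lambda,{\boldsymbol\ell}}$ has a single right zero inside $[\alpha_{i_0}]$ --- and that the residual target $t_k$ lands in that same plane, which is exactly where the hypothesis on the target values is used. When $[\beta_k]\cap\Lambda$ has two elements the second hypothesis instance pins $t_k$ down to $0$, which explains why two \emph{equivalent}-pair conditions are then imposed rather than one.
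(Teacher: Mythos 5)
Your proof is correct, and for the sufficiency half it takes a genuinely different route from the paper. The paper proves only necessity at this point (via Corollary \ref{C:4.4}, exactly as you do) and defers sufficiency to Section 5, where an explicit solution is assembled from the elementary interpolants $\widetilde{f}_{{\boldsymbol\ell},i}$, $\widetilde{f}_{{\bf r},j}$, $\widetilde{f}_{s}$ of Lemmas \ref{L:5.4}--\ref{L:5.6}; the delicate case of an equivalent pair is handled there by the Sylvester-equation analysis of Lemma \ref{L:5.6} together with the inversion formulas of Lemma \ref{L:7.1}, and the payoff is the full parametrization of Theorem \ref{T:5.7}. You instead solve the left-sided problem first (Theorem \ref{T:5.1b}) and seek a correction $\sum_k P_{\Lambda,{\boldsymbol\ell}}\,a_k\,P_{\Omega_k,{\bf r}}$, reducing everything to whether each residual target $t_k$ lies in the image of the $\R$-linear map $\Phi_k:b\mapsto(P_{\Lambda,{\boldsymbol\ell}}\,b)^{\br}(\beta_k)$; your identification $(P_{\Lambda,{\boldsymbol\ell}}\,a\,P_{\Omega_k,{\bf r}})^{\br}(\beta_k)=(P_{\Lambda,{\boldsymbol\ell}}\cdot(a\rho_k))^{\br}(\beta_k)$ is a correct consequence of \eqref{2.7}, and the rank--nullity count (Remark \ref{R:2.6a} pins down the single right zero of $P_{\Lambda,{\boldsymbol\ell}}$ in $[\beta_k]$, Lemma \ref{L:6.2} gives the two-dimensional kernel) shows the image is all of $\bH$, or exactly the plane $\Pi_{\overline{\alpha}_{i_0},\beta_k}$, or that $t_k$ must vanish --- which is precisely where the compatibility hypothesis is consumed. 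Your argument is softer and shorter, bypassing Lemma \ref{L:7.1} and the elementary polynomials entirely, but it is purely existential: it yields neither an explicit low-degree solution nor the description of the whole solution set, both of which the paper's longer construction provides.
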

The ``only if" part follows from Corollary \ref{C:4.4}. The sufficiency part will be confirmed 
in the next section.

\section{Two-sided problem}
\setcounter{equation}{0}

We still assume that none three of interpolation nodes are equivalent.
To be more specific, we assume that there are $k$ equivalent pairs 
in $\Lambda\times\Omega$ (the case $k=0$ is not excluded), and we rearrange the sets $\Lambda$ and $\Omega$ 
\eqref{1.7} 
so that these equaivalent pairs are $(\alpha_i,\beta_i)$ for $i=1,\ldots,k$.
In other words,
\begin{equation}
\alpha_i\sim\beta_i \; \; (1\le i\le k); \; \; 
[\alpha_i]\cap \Omega=\emptyset
\; \;  (k<i\le n); \; \;  [\beta_j]\cap \Lambda=\emptyset \; \; (k<j\le m).
\label{5.1}
\end{equation}
We also assume that the necessary conditions from Theorem \ref{T:4.7} hold:
\begin{equation}
\overline{\alpha}_i\left(c_i-d_i\right)=\left(c_i-d_i\right)\beta_i \qquad (i=1,\ldots,k).
\label{5.2}
\end{equation}
One may try to handle the two-sided problem in a standard way by combining the
explicit formula for the particular low degree solution and the parametrization 
of the solution set for the homogeneous problem. Both ingredients are not as simple as
in one-sided cases.
\subsection{Homogeneous problems}The homogeneous counterpart of 
the two-sided Lagrange problem 
\eqref{1.8}, \eqref{1.9} consists of finding all $f\in\bH[z]$ such that 
\begin{equation}
f^{\bl}(\alpha_i)=0 \quad (1\le i\le n);\qquad
f^{\br}(\beta_j)=0\quad (1\le j\le m).
\label{5.3}  
\end{equation}
The set ${\mathbb I}_{\Lambda,\boldsymbol \ell}$ of all polynomials $f\in\bH[z]$ satisfying left
conditions in \eqref{5.3} is the right ideal generated by the {\bf lmp} $P_{\Lambda,\boldsymbol\ell}$
of $\Lambda=\{\alpha_1,\ldots,\alpha_n\}$; in fact, ${\mathbb I}_{\Lambda,\boldsymbol \ell}$ is the finite 
intersection of maximal right ideals
in $\bH[z]$:
\begin{equation}
{\mathbb I}_{\Lambda,\boldsymbol \ell}=\bigcap_{i=1}^n\left\{f\in\bH[z]: \;
f(\alpha_i)=0\right\}=P_{\Lambda,\boldsymbol\ell}\cdot \bH[z].
\label{5.4}
\end{equation}
Analogously, the set ${\mathbb I}_{\Omega,\bf r}$ of all polynomials satisfying right
conditions in \eqref{5.3} is the left ideal (in fact, the finite intersection  of maximal  left ideals) in 
$\bH[z]$ generated by the {\bf rmp} $P_{\Omega,\bf r}$ of the set $\Omega=\{\beta_1,\ldots,\beta_m\}$:
\begin{equation}
{\mathbb I}_{\Omega,\bf r}=\bigcap_{j=1}^m\left\{f\in\bH[z]: \; f(\beta_j)=0\right\}=\bH[z]\cdot P_{\Omega,\bf r}.
\label{5.5}  
\end{equation}
Hence, ${\mathbb I}_{\Lambda,\boldsymbol \ell}\bigcap {\mathbb I}_{\Omega,\bf r}$ is the solution set to the 
problem \eqref{5.3} and the next question is to describe this intersection analytically. The next result shows 
that in case $k=0$ in \eqref{5.1}, ${\mathbb I}_{\Lambda,\boldsymbol \ell}\bigcap {\mathbb I}_{\Omega,\bf r}=
P_{\Lambda,\boldsymbol\ell}\cdot \bH[z]\cdot P_{\Omega,\bf r}$. 
\begin{theorem} 
A polynomial $f\in\bH[z]$ satisfies \eqref{5.3} and additional conditions 
\begin{equation}
(L_{\alpha_i} f)^{\br}(\beta_i)=0\quad\mbox{for}\quad i=1,\ldots,k
\label{5.6}   
\end{equation}
if and only if it belongs to $P_{\Lambda,\boldsymbol\ell}\cdot \bH[z]\cdot P_{\Omega,\bf r}$.
\label{T:5.1}
\end{theorem}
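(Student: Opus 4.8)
\emph{Strategy.} I would prove the two directions separately, using the ideal descriptions \eqref{5.4}, \eqref{5.5}, the product--evaluation formulas \eqref{2.6}, \eqref{2.7}, and the characterization \eqref{4.2} of the left backward shift $L_\alpha$. Throughout, for $\alpha_i\in\Lambda\subset\cZ_{\boldsymbol\ell}(P_{\Lambda,\boldsymbol\ell})$ I write $P_{\Lambda,\boldsymbol\ell}(z)=(z-\alpha_i)\cdot(L_{\alpha_i}P_{\Lambda,\boldsymbol\ell})(z)$ (valid by \eqref{2.5}), which lets me strip a linear left factor off $P_{\Lambda,\boldsymbol\ell}$.

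\emph{Membership implies the conditions.} Let $f=P_{\Lambda,\boldsymbol\ell}\cdot g\cdot P_{\Omega,\bf r}$ with $g\in\bH[z]$. Applying \eqref{2.6} to $P_{\Lambda,\boldsymbol\ell}\cdot(gP_{\Omega,\bf r})$ and using $P_{\Lambda,\boldsymbol\ell}^{\bl}(\alpha_i)=0$ gives $f^{\bl}(\alpha_i)=0$; applying \eqref{2.7} to $(P_{\Lambda,\boldsymbol\ell}g)\cdot P_{\Omega,\bf r}$ and using $P_{\Omega,\bf r}^{\br}(\beta_j)=0$ gives $f^{\br}(\beta_j)=0$; hence \eqref{5.3} holds. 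For \eqref{5.6}, fix $i\le k$ and write $P_{\Lambda,\boldsymbol\ell}=(z-\alpha_i)Q_i$ with $Q_i:=L_{\alpha_i}P_{\Lambda,\boldsymbol\ell}$. Then $f=(z-\alpha_i)\cdot(Q_i\,g\,P_{\Omega,\bf r})$ and $f^{\bl}(\alpha_i)=0$, so the uniqueness in \eqref{4.2} forces $L_{\alpha_i}f=Q_i\,g\,P_{\Omega,\bf r}$; since $\beta_i\in\Omega\subset\cZ_{\bf r}(P_{\Omega,\bf r})$, \eqref{2.7} once more gives $(L_{\alpha_i}f)^{\br}(\beta_i)=0$.

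\emph{The conditions imply membership.} Conversely, let $f$ satisfy \eqref{5.3} and \eqref{5.6}. By \eqref{5.4}, $f=P_{\Lambda,\boldsymbol\ell}\cdot g$ for some $g\in\bH[z]$, and by \eqref{5.5} it suffices to prove $g^{\br}(\beta_j)=0$ for $j=1,\dots,m$, for then $g=h\,P_{\Omega,\bf r}$ and $f=P_{\Lambda,\boldsymbol\ell}\,h\,P_{\Omega,\bf r}$. I would use the remark --- immediate from \eqref{2.7} and the fact that $\bH$ is a division ring --- that if $T\in\bH[z]$ has $T^{\br}$ nowhere zero on $[\beta_j]$, then $(Tg)^{\br}(\beta_j)=0$ implies $g^{\br}(\beta_j)=0$. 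For $j>k$ I apply it with $T=P_{\Lambda,\boldsymbol\ell}$: here $(P_{\Lambda,\boldsymbol\ell}g)^{\br}(\beta_j)=f^{\br}(\beta_j)=0$ by \eqref{5.3}, while $[\beta_j]\cap\Lambda=\emptyset$ by \eqref{5.1} and $\cZ_{\bf r}(P_{\Lambda,\boldsymbol\ell})\subset\cZ(P_{\Lambda,\boldsymbol\ell}P_{\Lambda,\boldsymbol\ell}^\sharp)=[\alpha_1]\cup\dots\cup[\alpha_n]$ by Theorem \ref{T:2.4}, so $P_{\Lambda,\boldsymbol\ell}^{\br}$ is nowhere zero on $[\beta_j]$. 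For $j\le k$ I write $P_{\Lambda,\boldsymbol\ell}=(z-\alpha_j)Q_j$, $Q_j=L_{\alpha_j}P_{\Lambda,\boldsymbol\ell}$; as before $L_{\alpha_j}f=Q_j\,g$, so \eqref{5.6} reads $(Q_j g)^{\br}(\beta_j)=0$, and the remark with $T=Q_j$ closes the case \emph{provided} $Q_j^{\br}$ is nowhere zero on $[\beta_j]=[\alpha_j]$.

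\emph{The crux.} The one real obstacle is that $Q_j=L_{\alpha_j}P_{\Lambda,\boldsymbol\ell}$ ($j\le k$) has no right zero equivalent to $\alpha_j$. I expect to argue by contradiction: if $Q_j^{\br}(\gamma)=0$ with $\gamma\sim\alpha_j$, then $Q_j=\widehat h\cdot(z-\gamma)$ for some $\widehat h\in\bH[z]$ by \eqref{2.5}, so $P_{\Lambda,\boldsymbol\ell}=(z-\alpha_j)\,\widehat h(z)\,(z-\gamma)$. Taking $\sharp$-conjugates via \eqref{2.2}, using \eqref{2.3}, and using that $\cX_{[\alpha_j]}(z)=(z-\alpha_j)(z-\overline{\alpha_j})=(z-\gamma)(z-\overline{\gamma})$ and $\widehat h\,\widehat h^\sharp$ are real (hence central) polynomials, one obtains $P_{\Lambda,\boldsymbol\ell}P_{\Lambda,\boldsymbol\ell}^\sharp=\cX_{[\alpha_j]}^{2}\cdot\widehat h\,\widehat h^\sharp$, so $\cX_{[\alpha_j]}^{2}$ divides the real polynomial $P_{\Lambda,\boldsymbol\ell}P_{\Lambda,\boldsymbol\ell}^\sharp$. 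But under \textbf{(A)} and the arrangement \eqref{5.1} the node $\alpha_j$ is the only element of $\Lambda$ in its conjugacy class (a third node of $\Lambda\cup\Omega$ in $[\alpha_j]=[\beta_j]$ is excluded by \textbf{(A)}), and $\alpha_j$ is non-real since $\alpha_j\sim\beta_j\in\Omega$ and $\Omega\cap\R=\emptyset$; hence in the factorization $P_{\Lambda,\boldsymbol\ell}=p_s\prod_r\cX_{V_r}$ of Theorem \ref{T:3.7}, $[\alpha_j]$ is none of the $V_r$ and $\alpha_j$ is one of the pairwise non-equivalent nodes generating $p_s$, so that $P_{\Lambda,\boldsymbol\ell}P_{\Lambda,\boldsymbol\ell}^\sharp=(p_s p_s^\sharp)\prod_r\cX_{V_r}^{2}$ with $p_s p_s^\sharp$ equal to the product of the \emph{distinct} irreducible quadratics $\cX_{[\,\cdot\,]}$ attached to those nodes; therefore $\cX_{[\alpha_j]}$ divides $P_{\Lambda,\boldsymbol\ell}P_{\Lambda,\boldsymbol\ell}^\sharp$ with multiplicity exactly one --- contradicting $\cX_{[\alpha_j]}^{2}\mid P_{\Lambda,\boldsymbol\ell}P_{\Lambda,\boldsymbol\ell}^\sharp$. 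Nailing down this multiplicity (including the elementary identity for $p_s p_s^\sharp$: it is real, monic of degree $2s$, and divisible by each of the pairwise coprime $\cX_{[\,\cdot\,]}$, hence equal to their product) is the delicate part; the rest is routine bookkeeping with \eqref{2.6}--\eqref{2.7} and \eqref{5.4}, \eqref{5.5}.
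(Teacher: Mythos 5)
Your proof is correct and follows the same route as the paper: strip $P_{\Lambda,\boldsymbol\ell}$ on the left to reduce to showing $g^{\br}(\beta_j)=0$, handle $j>k$ via absence of right zeros of $P_{\Lambda,\boldsymbol\ell}$ in $[\beta_j]$, and $j\le k$ via absence of right zeros of $L_{\alpha_j}P_{\Lambda,\boldsymbol\ell}$ in $[\alpha_j]=[\beta_j]$. The only difference is that where the paper merely ``observes'' that $\cZ_{\boldsymbol\ell}(L_{\alpha_j}P_{\Lambda,\boldsymbol\ell})\cup\cZ_{\bf r}(L_{\alpha_j}P_{\Lambda,\boldsymbol\ell})\subset\bigcup_{i\neq j}[\alpha_i]$, you actually supply a justification (the multiplicity-one count of $\cX_{[\alpha_j]}$ in $P_{\Lambda,\boldsymbol\ell}P_{\Lambda,\boldsymbol\ell}^\sharp$), which is a sound and slightly more explicit rendering of the same underlying fact.
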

\begin{proof} For any $h\in\bH[z]$, the polynomial $f=P_{\Lambda,\boldsymbol\ell}\cdot h\cdot P_{\Omega,\bf 
r}$ clearly satisfies conditions \eqref{5.3}. Since $P_{\Lambda,\boldsymbol\ell}^{\bl}(\alpha_i)=0$, we 
have 
\begin{equation}
L_{\alpha_i} f=L_{\alpha_i}(P_{\Lambda,\boldsymbol\ell}\cdot h\cdot P_{\Omega,\bf  
r})=(L_{\alpha_i}P_{\Lambda,\boldsymbol\ell})\cdot h\cdot P_{\Omega,\bf r}
\label{5.7}
\end{equation}
and since $P_{\Omega,\bf r}^{\br}(\beta_i)=0$, the right evaluation at $z=\beta_i$ applied to both sides of 
\eqref{5.7} implies \eqref{5.6}.

\smallskip

Conversely, let $f\in\bH[z]$ satisfy conditions \eqref{5.3}, \eqref{5.6}. Due to the left conditions 
in \eqref{5.3}, $f$ is in ${\mathbb I}_{\Lambda,\boldsymbol \ell}$ and thus, it is of the form 
$f=P_{\Lambda,\boldsymbol\ell}\cdot g$ for some $g\in\bH[z]$. To complete the proof, it suffices to show that 
$g\in {\mathbb I}_{\Omega,{\bf r}}$. 

\smallskip

If $j>k$, (i.e., if $[\beta_j]\cap 
\Lambda=\emptyset$; see \eqref{5.1}), then $P_{\Lambda,\boldsymbol\ell}$ does not have zeros
(either left or right, by Remark \eqref{R:2.6a}) in the conjugacy class $[\beta_j]$. Then the equality
$$
f^{\br}(\beta_j)=(P_{\Lambda,\boldsymbol\ell}\cdot g)^{\br}(\beta_j)=0
$$ 
implies $g^{\br}(\beta_j)=0$
since otherwise, the polynomial $P_{\Lambda,\boldsymbol\ell}$ had a right zero at 
\begin{equation}
g^{\br}(\beta_j)\beta_jg^{\br}(\beta_j)^{-1}\in[\beta_j]
\label{5.7a}   
\end{equation}
which is a contradiction. Thus, $g^{\br}(\beta_j)=0$ for all $j=k+1,\ldots, m$.

\smallskip

Observe that $\cZ_{\boldsymbol\ell}(L_{\alpha_j}P_{\Lambda,\boldsymbol\ell})\bigcup 
\cZ_{\bf r}(L_{\alpha_j}P_{\Lambda,\boldsymbol\ell})\subset \bigcup_{i\neq j}[\alpha_i]$. Hence, 
if $j\le k$ (i.e., $\alpha_j\sim\beta_j$  and $\alpha_i\not\sim \alpha_j$ whenever $i\neq j$), 
then we have
\begin{equation}
(\cZ_{\boldsymbol\ell}(L_{\alpha_j}P_{\Lambda,\boldsymbol\ell})\cup
\cZ_{\bf r}(L_{\alpha_j}P_{\Lambda,\boldsymbol\ell}))\cap [\alpha_j]=\emptyset.
\label{5.8}
\end{equation}
Then the equality 
$$
(L_{\alpha_j} f)^{\br}(\beta_j)=(L_{\alpha_j}(P_{\Lambda,\boldsymbol\ell}\cdot g))^{\br}(\beta_j)
=((L_{\alpha_j}P_{\Lambda,\boldsymbol\ell})\cdot g)^{\br}(\beta_j)=0
$$
implies $g^{\br}(\beta_j)=0$, since otherwise, $L_{\alpha_j} P_{\Lambda,\boldsymbol\ell}$ has 
right zero at the point \eqref{5.7a} which 
contradicts \eqref{5.8}. We thus have $g^{\br}(\beta_j)=0$ for all $j=1,\ldots, m$ and hence, 
$g\in {\mathbb I}_{\Omega,{\bf r}}$ which completes the proof. 
\end{proof}
If $k>0$ in \eqref{5.1}, then the set $P_{\Lambda,\boldsymbol\ell}\cdot \bH[z]\cdot P_{\Omega,\bf r}$
is properly included in ${\mathbb I}_{\Lambda,\boldsymbol \ell}\bigcap {\mathbb I}_{\Omega,\bf r}$.
The analytic description of ${\mathbb I}_{\Lambda,\boldsymbol \ell}\bigcap {\mathbb I}_{\Omega,\bf r}$
is given below. Recall that $P_{{\Lambda}_i,{\boldsymbol \ell}}$ is the {\bf lmp} of the set
$\Lambda_i=\Lambda\backslash\{\alpha_i\}$ and $P_{{\Omega}_i,{\bf r}}$ is the {\bf rmp} of 
$\Omega_i=\Omega\backslash\{\beta_i\}$.
\begin{theorem}
Under assumptions \eqref{5.1}, let 
\begin{equation}
\widetilde{\alpha}_i=P_{{\Lambda}_i,{\boldsymbol \ell}}^{\bl}(\alpha_i)^{-1}\cdot\alpha_i\cdot
P_{{\Lambda}_i,{\boldsymbol \ell}}^{\bl}(\alpha_i),\qquad
\widetilde{\beta}_i=P_{{\Omega}_i,{\bf r}}^{\br}(\beta_i)\cdot\beta_i\cdot
P_{{\Omega}_i,{\bf r}}^{\br}(\beta_i)^{-1},
\label{5.9}
\end{equation}
so that $\widetilde{\alpha}_i\sim \alpha_i\sim\beta_i\sim \widetilde{\beta}_i$.
Let $\Pi_{\widetilde{\alpha}_i,\widetilde{\beta}_i}$ be the plane defined via formula \eqref{4.9} 
for 
$i=1,\ldots,k$.
Then $f$ belongs to ${\mathbb I}_{\Lambda,\boldsymbol \ell}\bigcap 
{\mathbb I}_{\Omega,\bf r}$ if and only if it is of the form 
\begin{equation}
f(z)=\sum_{i=1}^k P_{\Lambda,\boldsymbol\ell}(z)\cdot \mu_i \cdot P_{{\Omega}_i,{\bf r}}(z)+
P_{\Lambda,\boldsymbol\ell}(z)\cdot h(z)\cdot P_{{\Omega},{\bf r}}(z)  
\label{5.10}
\end{equation}
for some $h\in\bH[z]$ and $\mu_i\in \Pi_{\widetilde{\alpha}_i,\widetilde{\beta}_i}$.
\label{T:5.2}
\end{theorem}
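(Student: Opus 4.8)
The plan is to verify the ``if'' direction by direct computation and then to reduce the ``only if'' direction to Theorem \ref{T:5.1} by subtracting off an appropriate finite sum of rank-one terms. First I would check that every polynomial of the form \eqref{5.10} lies in ${\mathbb I}_{\Lambda,\boldsymbol \ell}\bigcap {\mathbb I}_{\Omega,\bf r}$. Each summand $P_{\Lambda,\boldsymbol\ell}\cdot\mu_i\cdot P_{\Omega_i,\bf r}$ is left-divisible by $P_{\Lambda,\boldsymbol\ell}$, hence left-vanishes on $\Lambda$; it is right-divisible by $P_{\Omega_i,\bf r}$, so it right-vanishes on $\Omega_i=\Omega\setminus\{\beta_i\}$, and we only have to check it right-vanishes at $\beta_i$ as well. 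For this I would compute $(L_{\alpha_i}P_{\Lambda,\boldsymbol\ell}\cdot\mu_i\cdot P_{\Omega_i,\bf r})^{\br}(\beta_i)$ using \eqref{5.7}-type identities together with \eqref{2.6}, \eqref{2.7}; the point is that $(L_{\alpha_i}P_{\Lambda,\boldsymbol\ell})^{\br}$ and $P_{\Omega_i,\bf r}^{\br}$ conjugate $\beta_i$ to $\widetilde\beta_i$ and $\alpha_i$ to $\widetilde\alpha_i$ respectively (cf. \eqref{5.9}), so that the vanishing of $(P_{\Lambda,\boldsymbol\ell}\mu_i P_{\Omega_i,\bf r})^{\br}(\beta_i)$ is governed by the homogeneous Sylvester equation $\widetilde\alpha_i\,\mu_i-\mu_i\,\widetilde\beta_i=0$, whose solution set is exactly $\Pi_{\widetilde\alpha_i,\widetilde\beta_i}$ by Lemma \ref{L:6.2}.

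Next, for the ``only if'' direction, take $f\in{\mathbb I}_{\Lambda,\boldsymbol \ell}\cap{\mathbb I}_{\Omega,\bf r}$. The idea is to show that for each $i=1,\ldots,k$ there is a canonical choice $\mu_i\in\Pi_{\widetilde\alpha_i,\widetilde\beta_i}$ such that, after subtracting $\sum_{i=1}^k P_{\Lambda,\boldsymbol\ell}\mu_i P_{\Omega_i,\bf r}$ from $f$, the resulting polynomial $g$ still satisfies \eqref{5.3} and in addition satisfies the extra conditions \eqref{5.6} of Theorem \ref{T:5.1}; then $g\in P_{\Lambda,\boldsymbol\ell}\cdot\bH[z]\cdot P_{\Omega,\bf r}$ and we are done. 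To find $\mu_i$: since $f\in{\mathbb I}_{\Lambda,\boldsymbol\ell}$ write $f=P_{\Lambda,\boldsymbol\ell}\cdot g_0$; then $L_{\alpha_i}f=(L_{\alpha_i}P_{\Lambda,\boldsymbol\ell})\cdot g_0$, and the quantity $\Delta_i:=(L_{\alpha_i}f)^{\br}(\beta_i)$ is the obstruction to \eqref{5.6}. One should check $\overline{\widetilde\alpha_i}\,\Delta_i=\Delta_i\,\widetilde\beta_i$ — this follows because $f$ right-vanishes on $[\alpha_i]\cap\Lambda$-neighbours appropriately; more precisely, use Remark \ref{R:4.4a} applied to $f$ at $(\alpha_i,\beta_i)$, which gives $\alpha_i\Delta_i'-\Delta_i'\beta_i=f^{\bl}(\alpha_i)-f^{\br}(\beta_i)=0$ for $\Delta_i'=(L_{\alpha_i}f)^{\br}(\beta_i)$, and then transport this identity through the conjugations in \eqref{5.9} to land on $\widetilde\alpha_i,\widetilde\beta_i$. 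By Lemma \ref{L:6.2} the inhomogeneous Sylvester equation $\widetilde\alpha_i q-q\widetilde\beta_i=-\Delta_i$ is then solvable, and the modification $\mu_i$ is built from a particular solution via the same conjugation factors $P_{\Lambda_i,\boldsymbol\ell}^{\bl}(\alpha_i)$ and $P_{\Omega_i,\bf r}^{\br}(\beta_i)$ that appear in \eqref{5.9}. The freedom in $\mu_i$ is precisely the homogeneous solution space $\Pi_{\widetilde\alpha_i,\widetilde\beta_i}$, matching the parametrization.

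The bookkeeping subtlety — and what I expect to be the main obstacle — is showing that subtracting the $i$-th correction term does not disturb the already-arranged conditions for $j\neq i$. Here one uses that $P_{\Lambda,\boldsymbol\ell}\mu_i P_{\Omega_i,\bf r}$ is left-divisible by $P_{\Lambda,\boldsymbol\ell}$ (so all left conditions at $\Lambda$ survive), is right-divisible by $P_{\Omega_i,\bf r}$ hence right-vanishes at every $\beta_j$ with $j\neq i$ (so the right conditions at $\Omega$ survive), and — crucially — that applying $L_{\alpha_j}$ for $j\le k$, $j\neq i$, and right-evaluating at $\beta_j$ still gives zero; this last point uses \eqref{5.7}, the fact that $L_{\alpha_j}(P_{\Lambda,\boldsymbol\ell}\mu_i P_{\Omega_i,\bf r})=(L_{\alpha_j}P_{\Lambda,\boldsymbol\ell})\mu_i P_{\Omega_i,\bf r}$ because $P_{\Lambda,\boldsymbol\ell}$ left-vanishes at $\alpha_j$, and that $P_{\Omega_i,\bf r}$ right-vanishes at $\beta_j$ for $i\neq j$ (both $i,j\le k$, so $\beta_j\in\Omega_i$). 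Hence all $k$ corrections commute with one another and can be subtracted simultaneously. After that, uniqueness of the particular low-degree part plus the degree bounds \eqref{3.7}, \eqref{3.9} and Theorem \ref{T:5.1} finish the argument: $g=f-\sum_i P_{\Lambda,\boldsymbol\ell}\mu_i P_{\Omega_i,\bf r}$ satisfies \eqref{5.3} and \eqref{5.6}, so $g=P_{\Lambda,\boldsymbol\ell}\cdot h\cdot P_{\Omega,\bf r}$ for some $h\in\bH[z]$, which is exactly \eqref{5.10}.
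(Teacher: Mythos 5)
Your overall architecture is sound and is in fact a more direct route than the paper's: the paper obtains Theorem \ref{T:5.2} only as the specialization of the full parametrization \eqref{5.40} of Theorem \ref{T:5.7} to zero interpolation data (so it rests on Lemmas \ref{L:5.4}--\ref{L:5.6} and Remark \ref{R:5.7}), whereas you prove the homogeneous statement in a self-contained way by correcting $f$ with the terms $P_{\Lambda,\boldsymbol\ell}\cdot\mu_i\cdot P_{\Omega_i,{\bf r}}$ and then invoking Theorem \ref{T:5.1}. Your ``if'' direction and the bookkeeping paragraph are fine; the quickest verification that the correction term right-vanishes at $\beta_i$ is the identity $P_{\Lambda,\boldsymbol\ell}\cdot\mu_i\cdot P_{\Omega_i,{\bf r}}=P_{\Lambda_i,\boldsymbol\ell}\cdot\mu_i\cdot P_{\Omega,{\bf r}}$, valid precisely for $\mu_i\in\Pi_{\widetilde\alpha_i,\widetilde\beta_i}$ (cf.\ Remark \ref{R:5.6a}).

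The gap is in how you produce $\mu_i$ in the ``only if'' direction. Remark \ref{R:4.4a} gives $\alpha_i\Delta_i-\Delta_i\beta_i=f^{\bl}(\alpha_i)-f^{\br}(\beta_i)=0$, i.e.\ $\Delta_i\in\Pi_{\alpha_i,\beta_i}$; it does \emph{not} give $\overline{\widetilde\alpha_i}\,\Delta_i=\Delta_i\,\widetilde\beta_i$, which is membership in $\Pi_{\overline{\widetilde\alpha_i},\widetilde\beta_i}$ --- a genuinely different plane (for $\beta_i=\alpha_i=\widetilde\alpha_i=\widetilde\beta_i$ the two conditions read $\alpha_i\Delta=\Delta\alpha_i$ and $\overline{\alpha_i}\Delta=\Delta\alpha_i$, whose solution sets are orthogonal complements). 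Moreover, even where the inhomogeneous equation $\widetilde\alpha_i q-q\widetilde\beta_i=-\Delta_i$ is solvable, its solution set is the \emph{affine} plane $q_0+\Pi_{\widetilde\alpha_i,\widetilde\beta_i}$ with $q_0\notin\Pi_{\widetilde\alpha_i,\widetilde\beta_i}$ whenever $\Delta_i\neq0$, so a $\mu_i$ built from a particular solution would not lie in the linear plane that formula \eqref{5.10} requires. What is actually needed is that the real-linear map $T_i:\mu\mapsto\bigl((L_{\alpha_i}P_{\Lambda,\boldsymbol\ell})\cdot\mu\cdot P_{\Omega_i,{\bf r}}\bigr)^{\br}(\beta_i)$, which is injective because neither factor has a right zero in $[\beta_i]$ (by \eqref{5.8} and \eqref{5.1}), carries the two-dimensional plane $\Pi_{\widetilde\alpha_i,\widetilde\beta_i}$ \emph{onto} the two-dimensional plane $\Pi_{\alpha_i,\beta_i}$ containing $\Delta_i$: the inclusion $T_i(\Pi_{\widetilde\alpha_i,\widetilde\beta_i})\subset\Pi_{\alpha_i,\beta_i}$ follows from your ``if'' direction combined with Remark \ref{R:4.4a}, and equality follows by a dimension count. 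With that replacement there is a unique $\mu_i\in\Pi_{\widetilde\alpha_i,\widetilde\beta_i}$ with $T_i(\mu_i)=\Delta_i$, and the rest of your argument goes through.
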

The proof will be given in Section 6 as a consequence of Theorem \ref{T:5.7}. 
Although the result of Theorem \ref{T:5.2} will not be used 
in our further analysis, it is of some independent interest as we now explain. 
\subsection{Quasi-ideals and principal bi-ideals in $\bH[z]$} The notions of quasi-ideals
and bi-ideals in associative rings were introduced in \cite{stein} and \cite{laj}, respectively.
We recall these notions in the present context
of $R=\bH[z]$ (a ring with identity, in which any ideal is principal).

\smallskip

A subset of $\bH[z]$ is called a {\em bi-ideal} if it is a left ideal of some right ideal in $\bH[z]$
or equivalently, it is a right ideal of a left ideal in $\bH[z]$. Although all ideals in 
$\bH[z]$ are principal, a left ideal of a right ideal of $\bH[z]$ is not principal, in general. Let us say 
that $Q$ is a {\em principal bi-ideal} (this notion is not common) if it is a principal left ideal of some 
right ideal in $\bH[z]$. According to this definition, each principal bi-ideal in $\bH[z]$ is of the form 
$Q=p\cdot \bH[z]\cdot q$ for some fixed $p,q\in\bH[z]$ and hence, a principal bi-ideal can be 
equivalently defined as  a principal right ideal of some left ideal in $\bH[z]$. Theorem \ref{T:5.1}
tells us that the solution set of the homogeneous interpolation problem \eqref{5.3}, \eqref{5.6}
is a principal bi-ideal.

\smallskip

Recall that a subset of a ring $R$ with identity is called a {\em quasi-ideal} if it is the intersection of a 
left ideal of $R$ with a right ideal of $R$. Theorem \ref{5.2} provides an analytic description of a 
quasi-ideal in $\bH[z]$ which is the intersection of finitely many maximal left ideals and 
finitely many maximal right ideals in $\bH[z]$. 

\smallskip 

Quasi-ideals and principal bi-ideals are special instancies of bi-ideals. In general, these 
two notions are distinct. To demonstrate this, let
$$
{\mathbb I}_{\boldsymbol\ell}=(z-{\bf i})\cdot\bH[z],\quad
{\mathbb I}_{\bf r}=\bH[z]\cdot(z-{\bf j}),\quad Q=(z-{\bf i})\cdot {\mathbb I}_{\bf r}={\mathbb 
I}_{\boldsymbol\ell}\cdot(z-{\bf j}),
$$
and let us show that 
\begin{enumerate}
\item the principal bi-ideal $Q$ is not a quasi-ideal and that 
\item the quasi-ideal
${\mathbb I}_{\boldsymbol\ell}\cap {\mathbb I}_{\bf r}$ is not a principal bi-ideal.
\end{enumerate}
We have $Q\subset {\mathbb I}_{\boldsymbol\ell}\cap {\mathbb I}_{\bf r}$, and the inclusion is proper
since the function
\begin{equation}
(z-{\bf i})({\bf i}+{\bf j})=({\bf i}+{\bf j})(z-{\bf j}) \quad\mbox{belongs to}\quad {\mathbb 
I}_{\boldsymbol\ell}\cap {\mathbb I}_{\bf r},
\label{5.11}   
\end{equation}
but not to $Q$.  Let us assume that there is a proper right ideal 
$\widetilde{{\mathbb I}}_{\boldsymbol\ell}\subset {\mathbb I}_{\boldsymbol\ell}$ containing 
$Q$. Then $\widetilde{{\mathbb I}}_{\boldsymbol\ell}$ is generated by a right multiple of $(z-{\bf i})$
and hence $L_{\bf i}Q={\mathbb I}_{\bf r}\subset (z-\alpha)\cdot \bH[z]$ for some $\alpha\in\bH$. Since 
$h(z)=z-{\bf j}$ belongs to 
${\mathbb I}_{\bf r}$, we necessarily have $\alpha={\bf j}$. Since $g(z)={\bf i}(z-{\bf j})=(z+{\bf j}){\bf 
i}$ belongs to ${\mathbb I}_{\bf r}$, we also have $\alpha=-{\bf j}$ which is a contradiction. 
Since the intersection of right ideals is a right ideal, it follows that any right ideal in $\bH[z]$ 
containing $Q$ also contains ${\mathbb I}_{\boldsymbol\ell}$. Similarly, any left ideal in $\bH[z]$ 
containing $Q$ also contains ${\mathbb I}_{\bf r}$. 
Therefore, ${\mathbb I}_{\boldsymbol\ell}\cap {\mathbb I}_{\bf r}$ is the minimal quasi-ideal
containing $Q$, and since $Q\neq {\mathbb I}_{\boldsymbol\ell}\cap {\mathbb I}_{\bf r}$, it follows that
$Q$ is not a quasi-ideal.

\smallskip

To show part (2), we first observe that ${\mathbb I}_{\boldsymbol\ell}\cap {\mathbb I}_{\bf r}$
is not a left ideal in $\bH[z]$. Indeed, if ${\mathbb I}_{\boldsymbol\ell}\cap {\mathbb I}_{\bf 
r}$ were a left ideal, it would have been a proper ideal of ${\mathbb I}_{\bf r}$ generated therefore 
by a polynomial of degree at least two and not containing therefore, polynomials of degree one.
The latter contradicts to \eqref{5.11}. Similarly, ${\mathbb I}_{\boldsymbol\ell}\cap {\mathbb I}_{\bf r}$
is not a right ideal either. If ${\mathbb I}_{\boldsymbol\ell}\cap {\mathbb I}_{\bf r}$ is a
principal bi-ideal, it admits a representation ${\mathbb I}_{\boldsymbol\ell}\cap {\mathbb I}_{\bf r}
=p\cdot \bH[z]\cdot q$ for some polynomials $p,q$ of degree at least one. Then it again cannot contain 
polynomials of degree one which contradicts to \eqref{5.11}. Therefore, ${\mathbb I}_{\boldsymbol\ell}\cap 
{\mathbb I}_{\bf r}$ is not a principal bi-ideal.
\subsection{Elementary cases} In formula \eqref{1.2}, the complex Lagrange interpolation polinomial  
$\widetilde{f}$ is constructed as the sum of polynomials $\widetilde{f}_j(z)=\frac{c_jp_j(z)}{p_j(z_j)}$
satisfying interpolation conditions  $\widetilde{f}_j(z_j)=c_j$ and $\widetilde{f}_j(z_i)=0$ for $i\neq j$.
The formulas \eqref{3.8} for the left Lagrange polynomial $\widetilde{f}_{\boldsymbol\ell}$ and 
\eqref{3.10} for the right Lagrange polynomial $\widetilde{f}_{\bf r}$ followed the same 
strategy:
to construct ``elementary" polynomials satisfying one non-homogeneous interpolation condition
from \eqref{1.8} (respectively, from \eqref{1.9}) and having left (respectively, right) zeros at 
all other interpolation nodes, and then to construct $\widetilde{f}_{\boldsymbol\ell}$ (respectively, 
$\widetilde{f}_{\bf r}$) as the sum of these polynomials. In this section we will adapt this approach to the 
two-sided problem.  We start with a techical result.
\begin{lemma}
Given $P\in\bH[z]$, given $\beta\not\in\cZ(P^{\sharp}P)$ and non-zero $d,\delta\in\bH$,
\begin{align}
&d=P^{\br}(\delta\beta\delta^{-1})\cdot\delta \; \; \Longleftrightarrow \; \;
\delta=P^{\sharp\br}(d\beta d^{-1})\cdot d\cdot(P^{\sharp}P)(\beta)^{-1};
\label{5.12} \\
&d=\delta\cdot P^{\bl}(\delta^{-1}\beta\delta) \; \; \Longleftrightarrow \; \; 
\delta=(P^{\sharp}P)(\beta)^{-1}\cdot d\cdot 
P^{\sharp\bl}(d^{-1}\beta d).
\label{5.13}
\end{align}
\label{L:7.1} 
\end{lemma}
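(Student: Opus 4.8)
The plan is to exploit the multiplicative rule \eqref{2.6} for left evaluations (and its right analogue \eqref{2.7}), together with the factorization identities \eqref{2.3} for the conjugate $P^{\sharp}$, to convert each of the two biconditionals into a single algebraic manipulation. I will prove \eqref{5.12} in detail; \eqref{5.13} follows by an entirely parallel argument, using the left-evaluation version of the product rule in place of the right one (or by applying \eqref{5.12} to suitable conjugates). First I would record the key auxiliary fact: since $P^{\sharp}P\in\R[z]$ is real (by \eqref{2.3}), Remark \ref{R:1.1} gives $(P^{\sharp}P)^{\br}(\gamma)=(P^{\sharp}P)^{\bl}(\gamma)$ for all $\gamma\in\bH$, and moreover this real polynomial takes the same value on an entire conjugacy class; in particular $(P^{\sharp}P)(\beta)=(P^{\sharp}P)(d\beta d^{-1})$ and this value is nonzero precisely because $\beta\not\in\cZ(P^{\sharp}P)$.

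For the forward direction of \eqref{5.12}, assume $d=P^{\br}(\delta\beta\delta^{-1})\cdot\delta$. Setting $\gamma:=\delta\beta\delta^{-1}\sim\beta$, so that $\delta\beta=\gamma\delta$, the hypothesis reads $d=P^{\br}(\gamma)\,\delta$, hence $P^{\br}(\gamma)=d\,\delta^{-1}$ is nonzero and $\delta=P^{\br}(\gamma)^{-1}d$. The product rule \eqref{2.7} applied to $g=P^{\sharp}$ and $f=P$ (note $f^{\br}(\gamma)=P^{\br}(\gamma)\neq 0$, and that $P^{\br}(\gamma)\gamma P^{\br}(\gamma)^{-1}=d\delta^{-1}\cdot\delta\beta\delta^{-1}\cdot\delta d^{-1}=d\beta d^{-1}$) yields
\begin{equation}
(P^{\sharp}P)^{\br}(\gamma)=P^{\sharp\br}\!\bigl(P^{\br}(\gamma)\,\gamma\,P^{\br}(\gamma)^{-1}\bigr)\cdot P^{\br}(\gamma)=P^{\sharp\br}(d\beta d^{-1})\cdot P^{\br}(\gamma).
\notag
\end{equation}
Since the left side equals $(P^{\sharp}P)(\beta)$ by the auxiliary fact, multiplying on the right by $P^{\br}(\gamma)^{-1}d=\delta$ gives $\delta=P^{\sharp\br}(d\beta d^{-1})\cdot d\cdot(P^{\sharp}P)(\beta)^{-1}$, as claimed. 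The converse direction is handled symmetrically: starting from the right-hand formula for $\delta$ in terms of $d$, one applies the same product rule \eqref{2.7} — now with the roles of $P$ and $P^{\sharp}$ interchanged, using $(P^{\sharp})^{\sharp}=P$ — to the real polynomial $P\,P^{\sharp}=P^{\sharp}P$ evaluated at $d\beta d^{-1}$, and unwinds the expression to recover $d=P^{\br}(\delta\beta\delta^{-1})\cdot\delta$. A small bookkeeping point is that one must check $P^{\sharp\br}(d\beta d^{-1})\neq 0$, which holds because the displayed identity exhibits it as $(P^{\sharp}P)(\beta)\cdot P^{\br}(\gamma)^{-1}$, a product of nonzero quaternions.

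The statement \eqref{5.13} is proved in the same manner with \eqref{2.6} in place of \eqref{2.7}: writing $\gamma:=\delta^{-1}\beta\delta$, the hypothesis $d=\delta\cdot P^{\bl}(\gamma)$ gives $P^{\bl}(\gamma)=\delta^{-1}d\neq 0$, and \eqref{2.6} applied to $g=P^{\sharp}$, $f=P$ (with $g^{\bl}(\gamma)=P^{\sharp\bl}(\gamma)$... — actually here one wants $g=P$, $f=P^{\sharp}$ so that $g^{\bl}(\gamma)=P^{\bl}(\gamma)\neq 0$ and the conjugated argument becomes $P^{\bl}(\gamma)^{-1}\gamma P^{\bl}(\gamma)=d^{-1}\beta d$) yields $(P^{\sharp}P)^{\bl}(\gamma)=(PP^{\sharp})^{\bl}(\gamma)=P^{\bl}(\gamma)\cdot P^{\sharp\bl}(d^{-1}\beta d)$, and since the left side equals $(P^{\sharp}P)(\beta)$ one solves for $\delta=P^{\bl}(\gamma)^{-1}d$ to obtain the right-hand formula in \eqref{5.13}; the converse is symmetric. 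I do not anticipate a genuine obstacle here — the whole lemma is a careful application of the product-evaluation formulas \eqref{2.6}–\eqref{2.7} — but the one place demanding attention is keeping the conjugating quaternions straight: one must verify at each step that the "twisted" evaluation point produced by \eqref{2.6}/\eqref{2.7} is exactly $d\beta d^{-1}$ (resp. $d^{-1}\beta d$) and not some other representative of $[\beta]$, and that all the quaternions being inverted are nonzero, which is where the hypothesis $\beta\not\in\cZ(P^{\sharp}P)$ and $d,\delta\neq 0$ are used.
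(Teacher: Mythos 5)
Your overall strategy — apply the product-evaluation rule \eqref{2.7} (resp.\ \eqref{2.6}) with $g=P^\sharp$, $f=P$ and the real polynomial $P^\sharp P$, then solve for $\delta$ — is exactly the paper's. But your ``key auxiliary fact'' is false, and that error propagates into the algebra.

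A real polynomial is \emph{not} constant on a conjugacy class. What Remark~\ref{R:1.1} and centrality of $\R$ actually give is
$$
(P^\sharp P)(h^{-1}\beta h)\;=\;h^{-1}\,(P^\sharp P)(\beta)\,h,
$$
so the values at conjugate points are themselves conjugate, not equal. (Concretely: for $P(z)=z$, $(P^\sharp P)(z)=z^2$, and $(1+{\bf i})^2=2{\bf i}\ne -2{\bf i}=(1-{\bf i})^2$ even though $1+{\bf i}\sim 1-{\bf i}$.) Hence your claim $(P^\sharp P)(\beta)=(P^\sharp P)(d\beta d^{-1})$ is wrong in general. Once you replace it by the true identity, your displayed chain gives
$$
\delta(P^\sharp P)(\beta)\delta^{-1}\;=\;(P^\sharp P)^{\br}(\gamma)\;=\;P^{\sharp\br}(d\beta d^{-1})\cdot P^{\br}(\gamma),
$$
and right-multiplying by $P^{\br}(\gamma)^{-1}d=\delta$ produces $\delta(P^\sharp P)(\beta)=P^{\sharp\br}(d\beta d^{-1})\cdot d$, which \emph{does} yield the target formula $\delta=P^{\sharp\br}(d\beta d^{-1})\cdot d\cdot(P^\sharp P)(\beta)^{-1}$. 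As you wrote it, however, the (false) equality $(P^\sharp P)(\gamma)=(P^\sharp P)(\beta)$ followed by right-multiplication by $\delta$ would force $\delta=(P^\sharp P)(\beta)^{-1}\cdot P^{\sharp\br}(d\beta d^{-1})\cdot d$, and moving $(P^\sharp P)(\beta)^{-1}$ to the right end would require it to commute with $P^{\sharp\br}(d\beta d^{-1})\cdot d=\delta(P^\sharp P)(\beta)$, i.e.\ with $\delta$ — which has no reason to hold. So you reach the correct answer only through two compensating slips; as a logical chain the argument does not go through.

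The same issue appears in your sketch of \eqref{5.13}, where again the left-hand side must be read as $\delta^{-1}(P^\sharp P)(\beta)\delta$ rather than $(P^\sharp P)(\beta)$; the extra $\delta^{-1}(\cdot)\delta$ is precisely what places $(P^\sharp P)(\beta)^{-1}$ on the left in the final formula. Once you repair the auxiliary fact, your proof coincides with the paper's (the paper's converse step is, as you guessed, to re-apply the forward implication to $P^\sharp$, $\delta\cdot(P^\sharp P)(\beta)$ and $d$, using that $\beta$ commutes with $(P^\sharp P)(\beta)$; and \eqref{5.13} is obtained in the paper by quaternionic conjugation via $f^{\bl}(\alpha)=\overline{f^{\sharp\br}(\overline\alpha)}$, which is an alternative to your direct route through \eqref{2.6}).
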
 
\begin{proof} If $d=P^{\br}(\delta\beta\delta^{-1})\cdot\delta$, then 
\begin{align*}
P^{\sharp\br}(d\beta d^{-1})\cdot d&=P^{\sharp\br}\left(P^{\br}(\delta\beta\delta^{-1})\delta
\beta \delta^{-1}P^{\br}(\delta\beta\delta^{-1})^{-1}\right)P^{\br}(\delta\beta\delta^{-1})\delta\\
&=(P^{\sharp}P)^{\br}(\delta\beta\delta^{-1})\cdot\delta=\delta\cdot 
(P^{\sharp}P)(\beta),
\end{align*}
where the second equality follows from formula \eqref{2.7}, and the 
third equality follows since the polynomial $P^{\sharp}P$ is real. Since $(P^{\sharp}P)(\beta)\neq 
0$, the latter formula implies the formula for $\delta$ in \eqref{5.12} which completes the proof of 
implication $\Rightarrow$ in \eqref{5.12}.  To prove the reverse implication, we observe that 
the right equality in \eqref{5.12} is equivalent to 
$$
\delta\cdot(P^{\sharp}P)(\beta)=
P^{\sharp\br}(d\beta d^{-1})\cdot d.
$$
We then apply the implication $\Rightarrow$ (just proven) to the latter equality, i.e.,
to $P^\sharp$, $\delta\cdot(P^{\sharp}P)(\beta)$ and $d$ instead of 
$P$, $d$ and $\delta$, respectively:
\begin{align*}
d&=P^{\br}\left(\delta\cdot (P^{\sharp}P)(\beta)\cdot \beta \cdot
(P^{\sharp}P)(\beta)^{-1}\cdot \delta^{-1}\right)\cdot\delta
\cdot (P^{\sharp}P)(\beta)\cdot(P^{\sharp}P)(\beta)^{-1}\\
&=P^{\br}\left(\delta \beta \delta^{-1}\right)\cdot\delta,
\end{align*}
where the second equality holds since $\beta$ and $(P^{\sharp}P)(\beta)$ commute.
This completes the proof of the equivalence \eqref{5.12}. If we apply this 
equivalence to $P^\sharp$ and quaternionic conjugates of $\beta,d,\delta$, we get
$$
\overline{d}=P^{\sharp\br}\left(\overline{\delta^{-1}\beta\delta}\right)\cdot\overline{\delta}
\; \; \Longleftrightarrow \; \;
\overline{\delta}=P^{\br}\left(\overline{d^{-1}\beta d}\right)\cdot \overline{d}\cdot 
\left[(PP^{\sharp})(\overline{\beta})\right]^{-1}.
$$
Taking quaternionic conjugates in both equalities and making use of equality 
$f^{\bl}(\alpha)=\overline{f^{\sharp\br}(\overline{\alpha})}$ (holding for all $\alpha\in\bH$,
due to \eqref{2.2}), we arrive at \eqref{5.13}.
\end{proof}
The next three lemmas present ``elementary" polynomials which then will be used to 
construct a family of low-degree solutions to the problem \eqref{1.8}, \eqref{1.9}.
\begin{lemma}
Under assumptions \eqref{5.1}, let $s\in\{k+1,\ldots,m\}$ and let 
\begin{equation}
\gamma_s=\left\{\begin{array}{ccc} P_{\Lambda,{\boldsymbol\ell}}^{\sharp\br}(d_s\beta_sd_s^{-1})\cdot d_s\cdot 
(P_{\Lambda,{\boldsymbol\ell}}^{\sharp}P_{\Lambda,{\boldsymbol\ell}})(\beta_s)^{-1}\cdot
P_{{\Omega}_s,{\bf r}}^{\br}(\beta_s)^{-1},&\mbox{if}& d_s\neq 0,\\
0,&\mbox{if}& d_s=0.\end{array}\right.
\label{5.14}
\end{equation}
Then 
\begin{equation}
\widetilde{f}_{{\bf r},s}(z)=P_{\Lambda,{\boldsymbol\ell}}(z)\cdot \gamma_s\cdot P_{{\Omega}_s,{\bf r}}(z)
\label{5.15}
\end{equation}
is a unique polynomial of degree less than $n+m$ satisfying conditions
\begin{equation}
f^{\br}(\beta_s)=d_s,\quad f^{\br}(\beta_i)=0 \; \; (i\neq j),\quad f^{\bl}(\alpha_i)=0 \; \; (i=1,\ldots,n),
\label{5.16} 
\end{equation}
\begin{equation}
(L_{\alpha_i} f)^{\br}(\beta_i)=0\quad\mbox{for}\quad i=1,\ldots,k. 
\label{5.17}   
\end{equation}
\label{L:5.4}
\end{lemma}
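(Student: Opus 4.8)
The plan is to split according to the two branches of the definition \eqref{5.14}. If $d_s=0$ then $\gamma_s=0$, so $\widetilde f_{{\bf r},s}\equiv 0$: it has degree less than $n+m$ and trivially satisfies every condition in \eqref{5.16}, \eqref{5.17}, while its uniqueness is a special case of the argument below. So I would concentrate on the case $d_s\ne 0$ and verify the assertion in three steps: first the degree count together with the conditions \eqref{5.17} and all of \eqref{5.16} except $\widetilde f_{{\bf r},s}^{\br}(\beta_s)=d_s$; then that remaining condition; and finally uniqueness.

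For the first step, \eqref{3.7} and \eqref{3.9} give $\deg P_{\Lambda,{\boldsymbol\ell}}=n$ and $\deg P_{{\Omega}_s,{\bf r}}=m-1$, so $\widetilde f_{{\bf r},s}=P_{\Lambda,{\boldsymbol\ell}}\cdot\gamma_s\cdot P_{{\Omega}_s,{\bf r}}$ has degree $n+m-1<n+m$. Since $P_{\Lambda,{\boldsymbol\ell}}^{\bl}(\alpha_i)=0$ for $i=1,\dots,n$, formula \eqref{2.6} immediately gives $\widetilde f_{{\bf r},s}^{\bl}(\alpha_i)=0$; and since $\beta_i\in{\Omega}_s$ whenever $i\ne s$, we have $P_{{\Omega}_s,{\bf r}}^{\br}(\beta_i)=0$, whence $\widetilde f_{{\bf r},s}^{\br}(\beta_i)=0$ by \eqref{2.7}. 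For \eqref{5.17}, the identity $P_{\Lambda,{\boldsymbol\ell}}^{\bl}(\alpha_i)=0$ gives, exactly as in \eqref{5.7}, $L_{\alpha_i}\widetilde f_{{\bf r},s}=(L_{\alpha_i}P_{\Lambda,{\boldsymbol\ell}})\cdot\gamma_s\cdot P_{{\Omega}_s,{\bf r}}$, and since $i\le k<s$ forces $\beta_i\in{\Omega}_s$, another application of \eqref{2.7} yields $(L_{\alpha_i}\widetilde f_{{\bf r},s})^{\br}(\beta_i)=0$.

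The second step is the heart of the matter, and it is where Lemma \ref{L:7.1} is used. Put $\nu:=P_{{\Omega}_s,{\bf r}}^{\br}(\beta_s)$ and $\delta:=\gamma_s\nu$. First I would check the hypotheses of Lemma \ref{L:7.1}: since $s>k$ we have $[\beta_s]\cap\Lambda=\emptyset$, so by Theorem \ref{T:2.4} the zero set of the real polynomial $P_{\Lambda,{\boldsymbol\ell}}^{\sharp}P_{\Lambda,{\boldsymbol\ell}}$ equals $[\alpha_1]\cup\cdots\cup[\alpha_n]$ and hence misses $[\beta_s]$; in particular $(P_{\Lambda,{\boldsymbol\ell}}^{\sharp}P_{\Lambda,{\boldsymbol\ell}})(\beta_s)\ne 0$ and $P_{\Lambda,{\boldsymbol\ell}}^{\sharp}$ has no right zero in $[\beta_s]$, while $\nu\ne 0$ because $\beta_s\notin{\Omega}_s$ and no three of $\Lambda\cup\Omega$ are equivalent; consequently every factor in \eqref{5.14} is invertible, so $\gamma_s\ne 0$ and $\delta\ne 0$. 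Next I would compute the right evaluation of the product $\big(P_{\Lambda,{\boldsymbol\ell}}\gamma_s\big)\cdot P_{{\Omega}_s,{\bf r}}$ at $\beta_s$ by applying \eqref{2.7} twice — first peeling off the right factor $P_{{\Omega}_s,{\bf r}}$, whose right value at $\beta_s$ is $\nu$, then peeling off the constant $\gamma_s$ — to obtain
\[
\widetilde f_{{\bf r},s}^{\br}(\beta_s)=P_{\Lambda,{\boldsymbol\ell}}^{\br}\big(\delta\beta_s\delta^{-1}\big)\cdot\delta .
\]
The point is then that the factors $\nu^{-1}$ and $\nu$ occurring in $\gamma_s\nu$ cancel, so $\delta=P_{\Lambda,{\boldsymbol\ell}}^{\sharp\br}(d_s\beta_sd_s^{-1})\cdot d_s\cdot(P_{\Lambda,{\boldsymbol\ell}}^{\sharp}P_{\Lambda,{\boldsymbol\ell}})(\beta_s)^{-1}$, which is precisely the right-hand side of the equivalence \eqref{5.12} for $P=P_{\Lambda,{\boldsymbol\ell}}$, $d=d_s$ and $\beta=\beta_s$; Lemma \ref{L:7.1} then delivers $d_s=P_{\Lambda,{\boldsymbol\ell}}^{\br}(\delta\beta_s\delta^{-1})\cdot\delta=\widetilde f_{{\bf r},s}^{\br}(\beta_s)$, as required.

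Finally, for uniqueness, suppose $g\in\bH[z]$ has degree less than $n+m$ and also satisfies \eqref{5.16}, \eqref{5.17}. Then $\widetilde f_{{\bf r},s}-g$ satisfies the homogeneous conditions \eqref{5.3} together with \eqref{5.6}, hence lies in $P_{\Lambda,{\boldsymbol\ell}}\cdot\bH[z]\cdot P_{\Omega,{\bf r}}$ by Theorem \ref{T:5.1}; since every non-zero element of this set has degree at least $\deg P_{\Lambda,{\boldsymbol\ell}}+\deg P_{\Omega,{\bf r}}=n+m$, it must vanish, so $g=\widetilde f_{{\bf r},s}$. The step I expect to demand the most care is the double application of the product-evaluation rule \eqref{2.7}: one must track the two conjugation twists it introduces and notice that the factor $P_{{\Omega}_s,{\bf r}}^{\br}(\beta_s)^{-1}$ deliberately built into the definition \eqref{5.14} of $\gamma_s$ is exactly what makes $\delta=\gamma_s\nu$ coincide with the quaternion furnished by Lemma \ref{L:7.1}; once that is in place, the rest is routine bookkeeping.
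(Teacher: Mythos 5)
Your proposal is correct and follows essentially the same route as the paper's proof: the same degree count and vanishing arguments, the same use of the equivalence \eqref{5.12} of Lemma \ref{L:7.1} applied to $\delta=\gamma_s\cdot P_{{\Omega}_s,{\bf r}}^{\br}(\beta_s)$ combined with the product-evaluation rule \eqref{2.7}, and the same uniqueness argument via Theorem \ref{T:5.1}.
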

\begin{proof} If $d_s=0$, we have the homogeneous interpolation problem \eqref{5.3}, \eqref{5.6}, and 
the statement follows from Theorem \ref{T:5.1}. Assume that $d_s\neq 0$.  
Since none three elements in $\Omega$ are equivalent, the set
$\cZ_{\bf r}(P_{{\Omega}_s,{\bf r}})={\Omega}_s=\Omega\backslash\{\beta_s\}$  
contains at most one conjugate  of $\beta_s$; therefore, $P_{{\Omega}_s,{\bf r}}(\beta_s)\neq 0$.
Since $s>k$, we have by assumption \eqref{5.1},
$\beta_s\not\in\bigcup_{i=1}^n[\alpha_i]=\cZ(P_{\Lambda,{\boldsymbol\ell}}^{\sharp}P_{\Lambda,{\boldsymbol\ell}})$
and therefore, $(P_{\Lambda,{\boldsymbol\ell}}^{\sharp}P_{\Lambda,{\boldsymbol\ell}})(\beta_s)\neq 0$.
Hence, the formula \eqref{5.14} makes sense. 

\smallskip

To show that  $\widetilde{f}_{{\bf r},s}$ defined as in \eqref{5.15} satisfies the first condition in 
\eqref{5.16}, let write \eqref{5.14} (recall that $d_s\neq 0$) equivalently as 
\begin{equation}
\gamma_s\cdot P_{{\Omega}_s,{\bf r}}^{\br}(\beta_s)
=P_{\Lambda,{\boldsymbol\ell}}^{\sharp\br}(d_s\beta_sd_s^{-1})\cdot d_s\cdot
(P_{\Lambda,{\boldsymbol\ell}}^{\sharp}P_{\Lambda,{\boldsymbol\ell}})(\beta_s)^{-1}.
\label{5.17a}
\end{equation}
By implication $\; \Leftarrow\;$ in \eqref{5.12} (with $\delta=\gamma_s\cdot P_{{\Omega}_s,{\bf 
r}}^{\br}(\beta_s)$), we then have from \eqref{5.17a}
$$
d_s=P_{\Lambda,{\boldsymbol\ell}}^{\br}\left(
\gamma_s \cdot P_{{\Omega}_s,{\bf r}}^{\br}(\beta_s) \cdot \beta_s\cdot
P_{{\Omega}_s,{\bf r}}^{\br}(\beta_s)^{-1}\cdot \gamma_s^{-1}
\right)\cdot \gamma_j\cdot P_{{\Omega}_s,{\bf r}}^{\br}(\beta_s).
$$
On the other hand, applying formula \eqref{2.7} to the product in \eqref{5.15} gives
$$
\widetilde{f}_{{\bf r},s}^{\br}(\beta_s)=P_{\Lambda,{\boldsymbol\ell}}^{\br}\left(
\gamma_s\cdot P_{{\Omega}_s,{\bf r}}^{\br}(\beta_s)\cdot  \beta_s\cdot  
P_{{\Omega}_s,{\bf r}}^{\br}(\beta_s)^{-1}\cdot \gamma_s^{-1}
\right)\cdot \gamma_s\cdot P_{{\Omega}_s,{\bf r}}^{\br}(\beta_s).
$$
The two last equalities imply $\widetilde{f}_{{\bf r},s}^{\br}(\beta_s)=d_s$ so that
$\widetilde{f}_{{\bf r},s}$ indeed satisfies the first condition in \eqref{5.15}.
Other conditions in \eqref{5.15} are met since $P_{\Lambda,{\boldsymbol\ell}}^{\bl}(\alpha_i)=0$
(for all $\alpha_i\in\Lambda$) and $P_{{\Omega}_s,{\bf r}}^{\br}(\beta_i)=0$ (for all $\beta_i\in
\Omega\backslash\{\beta_s\}$) by definitions of left and right minimal polynomials. Furthermore, since
$P_{\Lambda,{\boldsymbol\ell}}^{\bl}(\alpha_i)=0$, we have, by \eqref{4.2} and \eqref{5.15},
$$
L_{\alpha_i}\widetilde{f}_{{\bf r},s}=
(L_{\alpha_i}P_{\Lambda,{\boldsymbol\ell}})\cdot \gamma_s\cdot P_{{\Omega}_s,{\bf r}}
$$  
and since $P_{{\Omega}_s,{\bf r}}^{\br}(\beta_i)=0$ for all $i=1,\ldots,k$, equalities
\eqref{5.17} hold.

\smallskip

It is clear from \eqref{5.15} that $\deg(\widetilde{f}_{{\bf r},s})=
\deg(P_{\Lambda,{\boldsymbol\ell}})+\deg(P_{{\Omega}_j,{\bf r}})=n+m-1$ (since $\gamma_s\neq 0$).
If $f$ is any polynomial subject to conditions  \eqref{5.16}, \eqref{5.17}, then the polynomial 
$f-\widetilde{f}_{{\bf r},s}$  belongs to $P_{\Lambda,{\boldsymbol\ell}}\cdot \bH[z]\cdot P_{\Omega,{\bf 
r}}$ (by Theorem \ref{5.1}) and therefore, either $f\equiv \widetilde{f}_{{\bf r},s}$ or 
$\deg (f-\widetilde{f}_{{\bf r},s})\ge 
\deg (P_{\Lambda,{\boldsymbol\ell}})+\deg(P_{\Omega,{\bf r}})=m+n$. This implies the uniqueness of a low-degree solution. 
\end{proof}
\begin{lemma}
Under assumptions \eqref{5.1}, let , let $s\in\{k+1,\ldots,n\}$ and let
\begin{equation}
\rho_s=\left\{\begin{array}{ccc}P_{{\Lambda}_s,{\boldsymbol \ell}}^{\bl}(\alpha_s)^{-1}
\cdot(P_{\Omega,{\bf r}}^{\sharp}P_{\Omega,{\bf r}})(\alpha_s)^{-1}\cdot c_s\cdot 
P_{\Omega,{\bf r}}^{\sharp\bl}(c_s^{-1}\alpha_s \, c_s), & \mbox{if}& c_s\neq 0,\\
0,& \mbox{if}& c_s=0.\end{array}\right.
\label{5.18}
\end{equation}  
Then
\begin{equation}
\widetilde{f}_{{\boldsymbol \ell},s}(z)=P_{{\Lambda}_s,{\boldsymbol\ell}}(z)\cdot \rho_s\cdot 
P_{\Omega,{\bf r}}(z)
\label{5.19}  
\end{equation} 
is a unique polynomial of degree less than $n+m$ satisfying conditions \eqref{5.17} and 
\begin{equation}
f^{\bl}(\alpha_s)=c_s,\quad f^{\bl}(\alpha_i)=0 \; \; (i\neq s),\quad f^{\br}(\beta_j)=d_j\; \; (j=1,\ldots,m).
\label{5.20}
\end{equation}
\label{L:5.5}
\end{lemma}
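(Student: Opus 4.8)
The plan is to mirror the proof of Lemma~\ref{L:5.4}, interchanging the roles of left and right evaluations and using the equivalence \eqref{5.13} of Lemma~\ref{L:7.1} in place of \eqref{5.12}. First dispose of the case $c_s=0$: then $\rho_s=0$ and $\widetilde{f}_{{\boldsymbol\ell},s}\equiv 0$, the conditions \eqref{5.17}, \eqref{5.20} reduce to the homogeneous problem \eqref{5.3}, \eqref{5.6}, and by Theorem~\ref{T:5.1} every solution of that problem lies in $P_{\Lambda,{\boldsymbol\ell}}\cdot\bH[z]\cdot P_{\Omega,{\bf r}}$, hence has degree at least $n+m$ unless it is $0$; so $\widetilde{f}_{{\boldsymbol\ell},s}$ is the unique low-degree solution. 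From now on assume $c_s\neq 0$, and check first that \eqref{5.18} is meaningful: as $s>k$, assumption \eqref{5.1} gives $\alpha_s\notin\bigcup_{j=1}^{m}[\beta_j]=\cZ(P_{\Omega,{\bf r}}^{\sharp}P_{\Omega,{\bf r}})$ (the right-sided analogue of the identity invoked in the proof of Lemma~\ref{L:5.4}), so $(P_{\Omega,{\bf r}}^{\sharp}P_{\Omega,{\bf r}})(\alpha_s)\neq 0$; and by assumption~{\bf (A)} at most one element of $\Lambda_s$ is equivalent to $\alpha_s$, whence $P_{{\Lambda}_s,{\boldsymbol\ell}}^{\bl}(\alpha_s)\neq 0$, exactly as in the proof of Theorem~\ref{T:5.1b}. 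In particular $\rho_s\neq 0$, so $\deg\widetilde{f}_{{\boldsymbol\ell},s}=\deg P_{{\Lambda}_s,{\boldsymbol\ell}}+\deg P_{\Omega,{\bf r}}=n+m-1$ by \eqref{3.7}, \eqref{3.9}.

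The interpolation conditions other than $f^{\bl}(\alpha_s)=c_s$ are routine. Since $P_{{\Lambda}_s,{\boldsymbol\ell}}$ left-vanishes on $\Lambda_s$ and $P_{\Omega,{\bf r}}$ right-vanishes on $\Omega$, formulas \eqref{2.6} and \eqref{2.7} give $\widetilde{f}_{{\boldsymbol\ell},s}^{\bl}(\alpha_i)=0$ for $i\neq s$ and $\widetilde{f}_{{\boldsymbol\ell},s}^{\br}(\beta_j)=0$ for all $j$. For \eqref{5.17}, fix $i\le k$; then $s>k$ forces $\alpha_i\in\Lambda_s$, so $P_{{\Lambda}_s,{\boldsymbol\ell}}(z)=(z-\alpha_i)\cdot(L_{\alpha_i}P_{{\Lambda}_s,{\boldsymbol\ell}})(z)$ by \eqref{4.2}, hence $\widetilde{f}_{{\boldsymbol\ell},s}(z)=(z-\alpha_i)\cdot(L_{\alpha_i}P_{{\Lambda}_s,{\boldsymbol\ell}})(z)\cdot\rho_s\cdot P_{\Omega,{\bf r}}(z)$; cancelling $z-\alpha_i$ (legitimate since $\bH[z]$ has no zero divisors) and using \eqref{4.2} again gives $L_{\alpha_i}\widetilde{f}_{{\boldsymbol\ell},s}=(L_{\alpha_i}P_{{\Lambda}_s,{\boldsymbol\ell}})\cdot\rho_s\cdot P_{\Omega,{\bf r}}$, and right-evaluating at $\beta_i$ yields $(L_{\alpha_i}\widetilde{f}_{{\boldsymbol\ell},s})^{\br}(\beta_i)=0$ because $P_{\Omega,{\bf r}}^{\br}(\beta_i)=0$ and \eqref{2.7}.

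The step I expect to be the main obstacle is the remaining condition $\widetilde{f}_{{\boldsymbol\ell},s}^{\bl}(\alpha_s)=c_s$. Set $b=P_{{\Lambda}_s,{\boldsymbol\ell}}^{\bl}(\alpha_s)$ and $\delta=b\rho_s$, both nonzero. Applying \eqref{2.6} twice --- first to the factorization $\widetilde{f}_{{\boldsymbol\ell},s}=P_{{\Lambda}_s,{\boldsymbol\ell}}\cdot(\rho_s P_{\Omega,{\bf r}})$, then to the product $\rho_s\cdot P_{\Omega,{\bf r}}$ --- gives
\[
\widetilde{f}_{{\boldsymbol\ell},s}^{\bl}(\alpha_s)=b\cdot\rho_s\cdot P_{\Omega,{\bf r}}^{\bl}\bigl(\rho_s^{-1}b^{-1}\alpha_s b\rho_s\bigr)=\delta\cdot P_{\Omega,{\bf r}}^{\bl}\bigl(\delta^{-1}\alpha_s\delta\bigr).
\]
On the other hand, multiplying the definition \eqref{5.18} of $\rho_s$ on the left by $b$ yields precisely
\[
\delta=(P_{\Omega,{\bf r}}^{\sharp}P_{\Omega,{\bf r}})(\alpha_s)^{-1}\cdot c_s\cdot P_{\Omega,{\bf r}}^{\sharp\bl}(c_s^{-1}\alpha_s c_s),
\]
which is the right-hand side of the equivalence \eqref{5.13} for $P=P_{\Omega,{\bf r}}$, $\beta=\alpha_s$, $d=c_s$ (admissible since $\alpha_s\notin\cZ(P_{\Omega,{\bf r}}^{\sharp}P_{\Omega,{\bf r}})$ and $c_s,\delta\neq 0$). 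Hence the left-hand side of \eqref{5.13} holds, i.e. $c_s=\delta\cdot P_{\Omega,{\bf r}}^{\bl}(\delta^{-1}\alpha_s\delta)$, and comparison with the first display gives $\widetilde{f}_{{\boldsymbol\ell},s}^{\bl}(\alpha_s)=c_s$. Uniqueness then follows as in Lemma~\ref{L:5.4}: if $f$ satisfies \eqref{5.17}, \eqref{5.20}, then $f-\widetilde{f}_{{\boldsymbol\ell},s}$ satisfies the homogeneous conditions \eqref{5.3}, \eqref{5.6} (the operators $L_{\alpha_i}$ being linear), so by Theorem~\ref{T:5.1} it lies in $P_{\Lambda,{\boldsymbol\ell}}\cdot\bH[z]\cdot P_{\Omega,{\bf r}}$, and since $\deg P_{\Lambda,{\boldsymbol\ell}}+\deg P_{\Omega,{\bf r}}=n+m$ under {\bf (A)}, either $f\equiv\widetilde{f}_{{\boldsymbol\ell},s}$ or $\deg f\ge n+m>\deg\widetilde{f}_{{\boldsymbol\ell},s}$. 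The only genuine calculations are the double application of \eqref{2.6} and the algebraic identification of $b\rho_s$ with the right side of \eqref{5.13}; everything else is formal.
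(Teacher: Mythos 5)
Your proof is correct and follows essentially the same route as the paper's: the identity $\widetilde{f}_{{\boldsymbol\ell},s}^{\bl}(\alpha_s)=c_s$ is obtained exactly as in the paper, by rewriting \eqref{5.18} as a formula for $\delta=P_{{\Lambda}_s,{\boldsymbol\ell}}^{\bl}(\alpha_s)\rho_s$ and invoking the implication $\Leftarrow$ of \eqref{5.13} together with \eqref{2.6}, and the uniqueness argument via Theorem \ref{T:5.1} is identical. The only (immaterial) deviation is that you verify \eqref{5.17} by factoring through $L_{\alpha_i}$ as in Lemma \ref{L:5.4}, whereas the paper's proof of this lemma uses the right backward shift $R_{\beta_i}$ and the identity $(L_\alpha f)^{\br}(\beta)=(R_\beta f)^{\bl}(\alpha)$ of \eqref{4.3}.
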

\begin{proof} The proof is similar to that of Lemma \ref{L:5.4}. If $c_s=0$, the statement follows from Theorem 
\ref{T:5.1}. If $c_s\neq 0$, the formula \eqref{5.18} makes sense, since 
$P_{{\Lambda}_s,{\boldsymbol\ell}}(\alpha_s)\neq 0$ 
and since  $\alpha_s\not\in\bigcup_{j=1}^m[\beta_j]=\cZ(P_{\Omega,{\bf r}}^{\sharp}P_{\Omega,{\bf r}})$. 
If $c_s\neq 0$, it is seen from \eqref{5.19} that $\deg(\widetilde{f}_{{\boldsymbol \ell},s})=
\deg(P_{\Lambda_s,{\boldsymbol\ell}})+\deg(P_{{\Omega},{\bf r}})=n-1+m$, while the uniqueness of a low-degree solution
follows from Theorem \ref{5.1} (as in the proof of Lemma \ref{L:5.4}). It remains to show that 
$\widetilde{f}_{{\boldsymbol \ell},s}$ indeed satisfies conditions \eqref{5.17} and \eqref{5.20}.

\smallskip

Since $P_{\Omega,{\bf r}}^{\br}(\beta_j)=0$ for $1\le\le m$, we have for the right backward shift 
$R_{\beta_j}$,
\eqref{4.1a},
$$
R_{\beta_j}\widetilde{f}_{{\boldsymbol \ell},s}=
R_{\beta_j}\left(P_{{\Lambda}_s,{\boldsymbol\ell}}\cdot \rho_s\cdot
P_{\Omega,{\bf r}}\right)=P_{{\Lambda}_s,{\boldsymbol\ell}}\cdot \rho_s\cdot
(R_{\beta_j}P_{\Omega,{\bf r}}).
$$
Since $P_{{\Lambda}_s,{\boldsymbol\ell}}(\alpha_i)=0$ for $i=1,\ldots,k$ (recall that $s>k$), we have in particular,
$(R_{\beta_i}\widetilde{f}_{{\boldsymbol \ell},s})^{\bl}(\alpha_i)=0$ for $i=1,\ldots,k$, and the latter equalities are 
equivalent to \eqref{5.17} due to \eqref{5.3}. 

\smallskip

Furthermore, if $c_s\neq 0$,  the formula \eqref{5.18} can be written 
equivalently as 
$$
P_{{\Lambda}_s,{\boldsymbol \ell}}^{\bl}(\alpha_s)\cdot \rho_s=
\left[(P_{\Omega,{\bf r}}^{\sharp}P_{\Omega,{\bf r}})(\alpha_s)\right]^{-1}\cdot c_s\cdot
P_{\Omega,{\bf r}}^{\sharp\bl}(c_s^{-1}\alpha_s \, c_s)
$$
and then by implication $\Leftarrow$ in \eqref{5.13} (with $\delta=P_{{\Lambda}_s,{\boldsymbol \ell}}^{\bl}(\alpha_s)\cdot 
\rho_s$), we have
$$
c_s=P_{{\Lambda}_s,{\boldsymbol \ell}}^{\bl}(\alpha_s)\cdot \rho_s\cdot
P_{\Omega,{\bf r}}^{\bl}\left(\rho_s^{-1}\cdot P_{{\Lambda}_s,{\boldsymbol \ell}}^{\bl}(\alpha_s)^{-1}
\cdot \alpha_s\cdot  P_{{\Lambda}_s,{\boldsymbol \ell}}^{\bl}(\alpha_s)\cdot \rho_s\right).
$$
On the other hand, formula \eqref{2.6} applied to the product \eqref{5.19} gives 
$$
\widetilde{f}_{{\boldsymbol \ell},s}^{\bl}(\alpha_s)=P_{{\Lambda}_s,{\boldsymbol \ell}}^{\bl}(\alpha_s)\cdot 
\rho_s\cdot
P_{\Omega,{\bf r}}^{\bl}\left(\rho_s^{-1}\cdot P_{{\Lambda}_s,{\boldsymbol \ell}}^{\bl}(\alpha_s)^{-1}
\cdot \alpha_s \cdot P_{{\Lambda}_s,{\boldsymbol \ell}}^{\bl}(\alpha_s)\cdot \rho_s\right),
$$
and the two latter equalities imply $\widetilde{f}_{{\boldsymbol \ell},i}^{\bl}(\alpha_s)=c_s$. Verification of all 
other equalities in \eqref{5.20} is the same as in the proof of Lemma \ref{L:5.4}. 
\end{proof}
\begin{lemma}
Under assumptions \eqref{5.1} and \eqref{5.2}, let $s\in\{1,\ldots,k\}$, let 
$\widetilde{\alpha}_s$ and $\widetilde{\beta}_s$ be defined as in \eqref{5.9} (so that 
$\alpha_s,\beta_s,\widetilde{\alpha}_s,\widetilde{\beta}_s$ belong to the same conjugacy class)
and let $\Pi_{\widetilde{\alpha}_s,\widetilde{\beta}_s}$ be the plane defined via formula 
\eqref{4.9}.
Then 
\begin{enumerate}
\item Quaternions  $\gamma_s$ and $\rho_s$ given by (compare with \eqref{5.14} and \eqref{5.18})
\begin{equation}
\gamma_s=\left\{\begin{array}{ccc} P_{\Lambda_s,{\boldsymbol\ell}}^{\sharp\br}(d_s\beta_sd_s^{-1})\cdot d_s\cdot
(P_{{\Lambda}_s,{\boldsymbol\ell}}^{\sharp}P_{{\Lambda}_s,{\boldsymbol\ell}})(\beta_s)^{-1}\cdot
P_{{\Omega}_s,{\bf r}}^{\br}(\beta_s)^{-1},&\mbox{if}& d_s\neq 0,\\ 0,&\mbox{if}& d_s=0,\end{array}\right.
\label{5.21}
\end{equation}  
\begin{equation}
\rho_s=\left\{\begin{array}{ccc}P_{{\Lambda}_s,{\boldsymbol \ell}}^{\bl}(\alpha_s)^{-1}
\cdot(P_{{\Omega}_s,{\bf r}}^{\sharp}P_{{\Omega}_s,{\bf r}})(\alpha_s)^{-1}\cdot c_s\cdot
P_{\Omega_s,{\bf r}}^{\sharp\bl}(c_s^{-1}\alpha_s c_s), & \mbox{if}& c_s\neq 0,\\
0,& \mbox{if}& c_s=0.\end{array}\right.
\label{5.22}
\end{equation}
are subject to equality 
\begin{equation}
\overline{\widetilde{\alpha}_s}(\rho_s-\gamma_s)=(\rho_s-\gamma_s)\widetilde{\beta}_s.
\label{5.23}
\end{equation}
\item All polynomials $f$ of degree less than $n+m$ satisfying conditions 
\begin{align}
&f^{\bl}(\alpha_s)=c_s,\quad f^{\bl}(\alpha_j)=0 \quad (j\in\{1,\ldots,n\}\backslash\{s\}),\label{5.24}\\
&f^{\br}(\beta_s)=d_s,\quad f^{\br}(\beta_j)=0 \quad (j\in\{1,\ldots,m\}\backslash\{s\})\label{5.25},\\
&(L_{\alpha_i} f)^{\br}(\beta_i)=0  \quad (i\in\{1,\ldots,k\}\backslash\{s\}),\label{5.26}
\end{align}
are given by the formula
\begin{equation}
f(z)=\widetilde{f}_{s}(z)+P_{\Lambda,{\boldsymbol \ell}}(z)\cdot \mu_s\cdot P_{{\Omega}_s,{\bf r}}(z)
\label{5.27}
\end{equation}
where $\mu_s\in \Pi_{\widetilde{\alpha}_s,\widetilde{\beta}_s}$ is a free parameter and where 
\begin{equation}
\widetilde{f}_{s}(z)=P_{\Lambda_s,{\boldsymbol \ell}}(z)\cdot \rho_s \cdot P_{{\Omega}_s,{\bf r}}(z)+
P_{\Lambda,{\boldsymbol \ell}}(z)\cdot (2{\rm
Im}(\widetilde{\alpha}_s))^{-1}(\rho_s-\gamma_s)\cdot P_{{\Omega}_s,{\bf r}}(z).\label{5.28}
\end{equation}
\end{enumerate}
\label{L:5.6}
\end{lemma}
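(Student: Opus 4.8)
The plan is to prove the two assertions of Lemma~\ref{L:5.6} in turn, leveraging the machinery already assembled. For part (1), the point is that the quaternions $\gamma_s$ and $\rho_s$ were constructed via Lemma~\ref{L:7.1} precisely so that $P_{\Lambda,{\boldsymbol\ell}}(z)\cdot\gamma_s\cdot P_{{\Omega}_s,{\bf r}}(z)$ right-interpolates $d_s$ at $\beta_s$ (this is the computation in Lemma~\ref{L:5.4}, now with $\Lambda_s$ in place of $\Lambda$) and $P_{\Lambda_s,{\boldsymbol\ell}}(z)\cdot\rho_s\cdot P_{\Omega,{\bf r}}(z)$ left-interpolates $c_s$ at $\alpha_s$ (the computation in Lemma~\ref{L:5.5}). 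I would first record these two facts by invoking the equivalences \eqref{5.12}, \eqref{5.13} exactly as in the proofs of Lemmas~\ref{L:5.4} and \ref{L:5.5}. Next I would compute, using formula \eqref{2.6} for left evaluation of a product, that
\begin{equation}
\bigl(P_{\Lambda,{\boldsymbol\ell}}\cdot\gamma_s\cdot P_{{\Omega}_s,{\bf r}}\bigr)^{\bl}(\alpha_s)
=\gamma_s\cdot P_{{\Omega}_s,{\bf r}}^{\bl}(\widetilde{\alpha}_s)\cdot(\text{something})\cdot\gamma_s^{-1}\cdots,
\notag
\end{equation}
but more to the point: since $P_{\Lambda,{\boldsymbol\ell}}^{\bl}(\alpha_s)=0$ the left value at $\alpha_s$ of $P_{\Lambda,{\boldsymbol\ell}}\cdot\gamma_s\cdot P_{{\Omega}_s,{\bf r}}$ vanishes, and I would use the factorization $P_{\Lambda,{\boldsymbol\ell}}=P_{\Lambda_s,{\boldsymbol\ell}}\cdot(z-\widetilde{\alpha}_s)$ (with $\widetilde{\alpha}_s$ as in \eqref{5.9}, cf. recursion \eqref{2.15}) together with \eqref{4.2} to read off $(L_{\alpha_s}$-type) data. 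The cleanest route to \eqref{5.23} is to apply Corollary~\ref{C:4.4} to a suitably built polynomial: find an $f$ (of degree $<n+m$) whose left value at $\alpha_s$ is $c_s$ and whose right value at $\beta_s$ is $d_s$ — for instance obtained from $\rho_s$ and $\gamma_s$ — and such that $L_{\alpha_s}f$-type quantities yield $\rho_s-\gamma_s$ up to the known conjugating factors $P_{\Lambda_s,{\boldsymbol\ell}}^{\bl}(\alpha_s)$ and $P_{{\Omega}_s,{\bf r}}^{\br}(\beta_s)$; then \eqref{5.23} is Corollary~\ref{C:4.4} rewritten in the conjugated nodes $\widetilde{\alpha}_s,\widetilde{\beta}_s$, using the definitions \eqref{5.9} to transport $\overline{\alpha}_s(c_s-d_s)=(c_s-d_s)\beta_s$ (assumption \eqref{5.2}) to the tilded version.

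For part (2), I would first verify that $\widetilde{f}_s$ defined by \eqref{5.28} satisfies all of \eqref{5.24}–\eqref{5.26}. The term $P_{\Lambda_s,{\boldsymbol\ell}}\cdot\rho_s\cdot P_{{\Omega}_s,{\bf r}}$ handles the left condition at $\alpha_s$ (value $c_s$) and vanishes left at $\alpha_j$ for $j\ne s$, and right at $\beta_j$ for $j\ne s$; the correction term $P_{\Lambda,{\boldsymbol\ell}}\cdot(2\mathrm{Im}(\widetilde{\alpha}_s))^{-1}(\rho_s-\gamma_s)\cdot P_{{\Omega}_s,{\bf r}}$ vanishes left at every $\alpha_i$ (since $P_{\Lambda,{\boldsymbol\ell}}$ does) and right at $\beta_j$ for $j\ne s$. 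The content is in two checks: that $\widetilde{f}_s^{\br}(\beta_s)=d_s$ and that $(L_{\alpha_i}\widetilde{f}_s)^{\br}(\beta_i)=0$ for $i\in\{1,\dots,k\}\setminus\{s\}$. The latter follows as in Lemmas~\ref{L:5.4}, \ref{L:5.5}: $L_{\alpha_i}$ factors through since $P_{\Lambda,{\boldsymbol\ell}}^{\bl}(\alpha_i)=0$ (resp. $P_{\Lambda_s,{\boldsymbol\ell}}^{\bl}(\alpha_i)=0$ because $i\ne s$), and the remaining right factor $P_{{\Omega}_s,{\bf r}}$ right-vanishes at $\beta_i$ since $i\le k$ means $\beta_i\in\Omega_s$. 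The equality $\widetilde{f}_s^{\br}(\beta_s)=d_s$ is the key combinatorial identity: one evaluates $P_{\Lambda_s,{\boldsymbol\ell}}\cdot\rho_s\cdot P_{{\Omega}_s,{\bf r}}$ at $\beta_s$ on the right using \eqref{2.7} (it gives some value, call it $d_s'$, generally $\ne d_s$), and evaluates the correction term at $\beta_s$ on the right; the sum must be $d_s$, and this is exactly where the factorization $P_{\Lambda,{\boldsymbol\ell}}=P_{\Lambda_s,{\boldsymbol\ell}}\cdot(z-\widetilde{\alpha}_s)$, the identity $(z-\widetilde{\alpha}_s)$ evaluated appropriately, and the Sylvester-equation relation \eqref{5.23} together conspire via Lemma~\ref{L:6.2} to close the gap $d_s-d_s'$, the factor $(2\mathrm{Im}(\widetilde{\alpha}_s))^{-1}(\rho_s-\gamma_s)$ being precisely the particular solution of the Sylvester equation identified there.

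Finally, to get the full solution set, I would argue that any $f$ of degree $<n+m$ satisfying \eqref{5.24}–\eqref{5.26} has $f-\widetilde{f}_s$ satisfying the homogeneous conditions \eqref{5.3} \emph{together with} $(L_{\alpha_i}(f-\widetilde{f}_s))^{\br}(\beta_i)=0$ for all $i\in\{1,\dots,k\}\setminus\{s\}$ — but \emph{not necessarily} for $i=s$. So $f-\widetilde{f}_s$ lies in ${\mathbb I}_{\Lambda,{\boldsymbol\ell}}\cap{\mathbb I}_{\Omega_s,{\bf r}}$ with the extra vanishing of the mixed quantities at all equivalent pairs except the $s$-th. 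Applying Theorem~\ref{T:5.1} in the setting with node sets $\Lambda$ and $\Omega_s$ (whose single equivalent pair is $(\alpha_s,\beta_s)$), I get that such a polynomial of degree $<n+m$ is of the form $P_{\Lambda,{\boldsymbol\ell}}\cdot\mu_s\cdot P_{{\Omega}_s,{\bf r}}$ for a \emph{constant} $\mu_s$ (degree forces $h\equiv\mu_s\in\bH$), and the single remaining constraint $(L_{\alpha_s}(P_{\Lambda,{\boldsymbol\ell}}\cdot\mu_s\cdot P_{{\Omega}_s,{\bf r}}))^{\br}(\beta_s)=0$ — which is what distinguishes ${\mathbb I}_{\Lambda,{\boldsymbol\ell}}\cap{\mathbb I}_{\Omega_s,{\bf r}}$ from the principal bi-ideal — translates, after peeling off the conjugating factors $P_{\Lambda_s,{\boldsymbol\ell}}^{\bl}(\alpha_s)$ and $P_{{\Omega}_s,{\bf r}}^{\br}(\beta_s)$ and using \eqref{2.6}, \eqref{2.7}, into the homogeneous Sylvester equation $\overline{\widetilde{\alpha}_s}\mu_s'=\mu_s'\widetilde{\beta}_s$ for the conjugated $\mu_s'$; by Lemma~\ref{L:6.2} its solution set is the plane $\Pi_{\widetilde{\alpha}_s,\widetilde{\beta}_s}$, and pushing this back through the (invertible) conjugation gives $\mu_s\in\Pi_{\widetilde{\alpha}_s,\widetilde{\beta}_s}$. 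The main obstacle I anticipate is the bookkeeping in this last translation: correctly tracking how left/right evaluations of the triple product interact with the conjugating factors so that the mixed backward-shift condition becomes exactly the homogeneous Sylvester equation in the tilded nodes — this is the step where an off-by-a-conjugation error would be easy to make, and it is the crux of why the free parameter varies in a two-dimensional real subspace rather than being uniquely determined.
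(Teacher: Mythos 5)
Your outline for part (1) points in the right direction — work with the polynomial whose left value at $\alpha_s$ and right value at $\beta_s$ are both computable, invoke Corollary~\ref{C:4.4}, and transport via the conjugated nodes — but it is vague about the actual transport. The crux the paper supplies is the identity
$$(L_{\alpha_s}P_{\Lambda,{\boldsymbol\ell}})(z)\cdot(z-\overline{\widetilde{\alpha}}_s)
=(z-\overline{\alpha}_s)\cdot P_{{\Lambda}_s,{\boldsymbol\ell}}(z),$$
derived from $P_{\Lambda,{\boldsymbol\ell}}=P_{\Lambda_s,{\boldsymbol\ell}}\cdot(z-\widetilde{\alpha}_s)$ together with the two-sided commutation of $\cX_{[\alpha_s]}$; this is what converts the un-tilded Sylvester relation $\overline{\alpha}_s(\widetilde{d}_s-d_s)=(\widetilde{d}_s-d_s)\beta_s$ into the tilded relation \eqref{5.23}. ``Rewriting Corollary~\ref{C:4.4} in the conjugated nodes'' is not a purely formal step; without this identity you cannot detach $(\rho_s-\gamma_s)$ from the surrounding polynomial factors. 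That gap in part (1) is fillable but not trivial.

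Part (2) contains a genuine error. After reducing $g=f-\widetilde{f}_s$ to the form $P_{\Lambda,{\boldsymbol\ell}}\cdot\mu_s\cdot P_{{\Omega}_s,{\bf r}}$ with constant $\mu_s$, you assert that the remaining constraint is $(L_{\alpha_s}(P_{\Lambda,{\boldsymbol\ell}}\mu_s P_{{\Omega}_s,{\bf r}}))^{\br}(\beta_s)=0$. That is the wrong constraint: conditions \eqref{5.24}--\eqref{5.26} impose \emph{no} mixed backward-shift condition at $i=s$. The constraint not yet captured by Theorem~\ref{T:5.1} (applied to $\Lambda$ and $\Omega_s$) is simply $g^{\br}(\beta_s)=0$. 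These are not equivalent: by Remark~\ref{R:4.4a}, $(L_{\alpha_s}g)^{\br}(\beta_s)=0$ forces $g^{\br}(\beta_s)=0$ but not conversely (since $g^{\bl}(\alpha_s)=0$ already holds). In fact, if you did impose $(L_{\alpha_s}g)^{\br}(\beta_s)=0$, then $g$ would satisfy all conditions of Theorem~\ref{T:5.1} with the full $\Omega$, forcing $g\in P_{\Lambda,{\boldsymbol\ell}}\cdot\bH[z]\cdot P_{\Omega,{\bf r}}$, which for $\deg g<n+m$ means $g\equiv 0$ — no free parameter at all, contradicting the lemma and the very two-dimensional freedom you are trying to explain. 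The correct peeling, from $g^{\br}(\beta_s)=0$, first strips $P_{{\Omega}_s,{\bf r}}^{\br}(\beta_s)$ (conjugating $\beta_s$ to $\widetilde{\beta}_s$), then uses $P_{\Lambda,{\boldsymbol\ell}}=P_{\Lambda_s,{\boldsymbol\ell}}(z-\widetilde{\alpha}_s)$ and the fact that $P_{\Lambda_s,{\boldsymbol\ell}}$ has no zeros in $[\alpha_s]$ to force $\mu_s\widetilde{\beta}_s-\widetilde{\alpha}_s\mu_s=0$. Note also that the homogeneous Sylvester equation you want is $\widetilde{\alpha}_s\mu_s=\mu_s\widetilde{\beta}_s$ (no bar), whose solution set is $\Pi_{\widetilde{\alpha}_s,\widetilde{\beta}_s}$ by Lemma~\ref{L:6.2}; the equation you wrote, $\overline{\widetilde{\alpha}_s}\mu_s=\mu_s\widetilde{\beta}_s$, has solution set $\Pi_{\overline{\widetilde{\alpha}_s},\widetilde{\beta}_s}$, a different plane.

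As for structure, your ``particular solution plus homogeneous kernel'' organization of part (2) is a legitimate alternative to the paper's ``drop the $\beta_s$-condition, apply Lemma~\ref{L:5.5} and Theorem~\ref{T:5.1}, then solve a Sylvester equation for $\psi_s$'' route; the paper's route has the advantage that it derives the formula \eqref{5.28} for $\widetilde{f}_s$ rather than requiring a direct verification of $\widetilde{f}_s^{\br}(\beta_s)=d_s$, which is the same Sylvester bookkeeping in disguise. But both rest on exactly the identification of the residual constraint that your draft gets wrong, so that step must be corrected before the rest of your argument can be made to work.
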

\begin{proof} We first observe that $d_s$ and $c_s$ are recovered from \eqref{5.21} and \eqref{5.22} by
\begin{equation}
d_s=\left(P_{{\Lambda}_s,{\boldsymbol \ell}}\cdot\gamma_s\cdot P_{{\Omega}_s,{\bf r}}\right)^{\br}(\beta_s),
\qquad c_s=\left(P_{{\Lambda}_s,{\boldsymbol \ell}}\cdot\rho_s\cdot P_{{\Omega}_s,{\bf 
r}}\right)^{\bl}(\alpha_s).
\label{5.29}
\end{equation}
The trivial cases where $d_s=c_s=0$ are clear. If $d_s\neq 0$, we have from \eqref{5.21},
$$
\gamma_s\cdot P_{{\Omega}_s,{\bf r}}^{\br}(\beta_s)=
P_{\Lambda_s,{\boldsymbol\ell}}^{\sharp\br}(d_s\beta_sd_s^{-1})\cdot 
d_s\cdot
(P_{{\Lambda}_s,{\boldsymbol\ell}}^{\sharp}P_{{\Lambda}_s,{\boldsymbol\ell}})(\beta_s)^{-1}
$$
and by implication $\; \Leftarrow\; $ in \eqref{5.12} and formula \eqref{2.7} we conclude
\begin{align*}
d_s&=P_{{\Lambda}_s,{\boldsymbol \ell}}^{\br}\left(\gamma_s\cdot P_{{\Omega}_s,{\bf r}}^{\br}(\beta_s)
\cdot\beta_s\cdot P_{{\Omega}_s,{\bf r}}^{\br}(\beta_s)^{-1}\cdot\gamma_s^{-1}
\right)\cdot\gamma_s\cdot P_{{\Omega}_s,{\bf r}}^{\br}(\beta_s)\\
&=\left(P_{{\Lambda}_s,{\boldsymbol \ell}}\cdot\gamma_s\cdot P_{{\Omega}_s,{\bf r}}\right)^{\br}(\beta_s),
\end{align*}
which confirms the first equality in \eqref{5.29}. The second equality for $c_s\neq 0$ is 
verified 
in much the same way. Let us introduce 
\begin{equation}
\widetilde{d}_s:=\left(P_{{\Lambda}_s,{\boldsymbol \ell}}\cdot\rho_s\cdot P_{{\Omega}_s,{\bf 
r}}\right)^{\br}(\beta_s).
\label{5.30}
\end{equation}
It follows from \eqref{5.29} and \eqref{5.30} that $c_s$ and $\widetilde{d}_s$ are left and right 
values 
of the same polynomial at conjugate points $\alpha_s$ and $\beta_s$; therefore 
$$
\overline{\alpha}_s(c_s-\widetilde{d}_s)=(c_s-\widetilde{d}_s)\beta_s,
$$
by Corollary \ref{C:4.4}. On the other hand, by the assumption \eqref{5.2},
\begin{equation}
\overline{\alpha}_s(c_s-d_s)=(c_s-d_s)\beta_s.
\label{5.30a}
\end{equation}
Combining the two latter equalities gives 
$\overline{\alpha}_s(\widetilde{d}_s-d_s)=(\widetilde{d}_s-d_s)\beta_s$. Substituting the formulas 
\eqref{5.29} and \eqref{5.30} for $d_s$ and $\widetilde{d}_s$ into the latter equality and taking 
into 
account that right evaluation is linear on $\bH[z]$, we get 
$$
\overline{\alpha}_s\cdot \left(P_{{\Lambda}_s,{\boldsymbol \ell}}\cdot(\rho_s-\gamma_s)\cdot 
P_{{\Omega}_s,{\bf r}}\right)^{\br}(\beta_s)=\left(P_{{\Lambda}_s,{\boldsymbol 
\ell}}\cdot(\rho_s-\gamma_s)\cdot
P_{{\Omega}_s,{\bf r}}\right)^{\br}(\beta_s)\cdot\beta_s
$$
which can be equivalently written as 
$$
\left((z-\overline{\alpha}_s)\cdot P_{{\Lambda}_s,{\boldsymbol \ell}}\cdot(\rho_s-\gamma_s)\cdot
P_{{\Omega}_s,{\bf r}}\right)^{\br}(\beta_s)=0.
$$
Using formula \eqref{2.7} and definition \eqref{5.9} of $\widetilde{\beta}_s$, we write 
the latter formula as 
$$
\left((z-\overline{\alpha}_s)\cdot P_{{\Lambda}_s,{\boldsymbol \ell}}\cdot(\rho_s-\gamma_s)
\right)^{\br}(\widetilde{\beta}_s)\cdot P_{{\Omega}_s,{\bf r}}^{\br}(\beta_s)=0
$$
and since $P_{{\Omega}_s,{\bf r}}^{\br}(\beta_s)\neq 0$, we have 
\begin{equation}
\left((z-\overline{\alpha}_s)\cdot P_{{\Lambda}_s,{\boldsymbol \ell}}\cdot(\rho_s-\gamma_s)     
\right)^{\br}(\widetilde{\beta}_s)=0.
\label{5.31} 
\end{equation}
Observe that if $\Lambda=\{\alpha_1,\ldots,\alpha_n\}$ is permuted by
moving $\alpha_s$ to the rightmost spot, then the 
recursion \eqref{2.14} produces  $P_{\Lambda,{\boldsymbol\ell}}$ in the form
\begin{equation}
P_{\Lambda,{\boldsymbol\ell}}(z)=P_{{\Lambda}_s,{\boldsymbol\ell}}(z)\cdot
\left(z-P^{\bl}_{{\Lambda}_s,{\boldsymbol\ell}}(\alpha_s)^{-1}\alpha_s
P^{\bl}_{{\Lambda}_s,{\boldsymbol\ell}}(\alpha_s) \right)=
P_{{\Lambda}_s,{\boldsymbol\ell}}(z)\cdot
(z-\widetilde{\alpha}_s).
\label{5.32}
\end{equation}
Since $P_{\Lambda,{\boldsymbol\ell}}$ has only one left zero in the conjugacy class $[\alpha_s]$, we have
\begin{align*}
(z-\overline{\alpha}_s)\cdot P_{\Lambda,{\boldsymbol\ell}}(z)&=
\cX_{[\alpha_s]}(z)\cdot (L_{\alpha_s}P_{\Lambda,{\boldsymbol\ell}})(z)\\
&=(L_{\alpha_s}P_{\Lambda,{\boldsymbol\ell}})(z)\cdot \cX_{[\alpha_s]}(z)\\
&=(L_{\alpha_s}P_{\Lambda,{\boldsymbol\ell}})(z)\cdot \cX_{[\widetilde{\alpha}_s]}(z)=
(L_{\alpha_s}P_{\Lambda,{\boldsymbol\ell}})(z)\cdot 
(z-\overline{\widetilde{\alpha}}_s)(z-\widetilde{\alpha}_s)
\end{align*}
which being compared with \eqref{5.32} implies
\begin{equation}
(L_{\alpha_s}P_{\Lambda,{\boldsymbol\ell}})(z)\cdot
(z-\overline{\widetilde{\alpha}}_s)=(z-\overline{\alpha}_s)\cdot P_{{\Lambda}_s,{\boldsymbol \ell}}(z).
\label{5.32a}
\end{equation}
Substituting this identity into \eqref{5.31} gives
$$
\left((L_{\alpha_s}P_{\Lambda,{\boldsymbol\ell}})\cdot (z-\overline{\widetilde{\alpha}}_s)
\cdot(\rho_s-\gamma_s)\right)^{\br}(\widetilde{\beta}_s)=0.
$$
Since the polynomial $L_{\alpha_s}P_{\Lambda,{\boldsymbol\ell}}$ has no zeros in the conjugacy class 
$[\alpha_s]=[\widetilde{\beta}_s]$, the latter equality implies 
$$
\left((z-\overline{\widetilde{\alpha}}_s)
\cdot(\rho_s-\gamma_s)\right)^{\br}(\widetilde{\beta}_s)=0
$$
which is the same as \eqref{5.23}. This completes the proof of the first statement of the lemma.

\smallskip

To prove the second statement, we first exclude the first condition in \eqref{5.25} and 
consider the problem with the remaining interpolation conditions in \eqref{5.24}--\eqref{5.26}.
In this reduced setting, the role of $\Omega$ is played by the set ${\Omega}_s$
(the point $\beta_s$ is temporarily excluded from $\Omega$)  and, since $\alpha_1,\ldots,\alpha_n$ does not 
have conjugates in $\Omega_s$, the reduced problem is of the type considered in Lemma \ref{L:5.5}. Combining 
Lemma \ref{L:5.5} and Theorem \ref{T:5.1} we conclude that all solutions of the reduced problem are given 
by the formula 
\begin{equation}
f(z)=P_{{\Lambda}_s,{\boldsymbol\ell}}(z)\cdot \rho_s\cdot
P_{{\Omega}_s,{\bf r}}(z)+P_{\Lambda,{\boldsymbol \ell}}(z)\cdot g(z)\cdot P_{{\Omega}_s,{\bf 
r}}(z)
\label{5.33}
\end{equation}
for some $g\in\bH[z]$ (by Theorem \ref{T:5.1}, the second term on the right side of \eqref{5.33} 
is a 
general  solution to the reduced homogeneous problem). Note that $\rho_s$ in \eqref{5.33} is 
defined by 
formula \eqref{5.18} but with $\Omega$ replaced by ${\Omega}_s$, that is, by formula \eqref{5.22}.
Let us observe the factorization 
\begin{equation}
P_{{\Omega},{\bf r}}(z)=(z-\widetilde{\beta}_s)\cdot P_{{\Omega}_s,{\bf r}}(z)
\label{5.34}
\end{equation}!
which follows from the recursion \eqref{2.13} applied to the set $\Omega=\{\beta_1,\ldots,\beta_m\}$
permuted by moving $\beta_s$ to the rightmost spot (the right-sided counterpart of \eqref{5.32}).
Furthermore, let us take $g$ in the form $g(z)=\psi_s+h(z)\cdot (z-\widetilde{\beta}_s)$ where 
$\psi_s=g^{\br}(\widetilde{\beta}_s)$ and $h=R_{\widetilde{\beta}_s}g$. Substituting this representation
into \eqref{5.33} and taking into account \eqref{5.34} we get
$$
f(z)=P_{{\Lambda}_s,{\boldsymbol\ell}}(z)\cdot \rho_s\cdot
P_{{\Omega}_s,{\bf r}}(z)+P_{\Lambda,{\boldsymbol \ell}}(z)\cdot \psi_s\cdot P_{{\Omega}_s,{\bf
r}}(z)+P_{{\Lambda}_s,{\boldsymbol\ell}}(z)\cdot h(z)\cdot P_{{\Omega},{\bf r}}(z).
$$
The rightmost term on the right represents the general solution to the homogemneous problem \eqref{5.3}, 
\eqref{5.6} and is of degree at least $m+n$ if $h\not\equiv 0$. Let us focus on the low-degree part
corresponding to the choice of $h\equiv 0$. The main objective now is to specify $\psi_s$ in such a way
that the function 
\begin{equation}
f=P_{{\Lambda}_s,{\boldsymbol\ell}}\cdot \rho_s\cdot
P_{{\Omega}_s,{\bf r}}+P_{\Lambda,{\boldsymbol \ell}}\cdot \psi_s\cdot P_{{\Omega}_s,{\bf
r}}
\label{5.35}  
\end{equation}
will satisfy  the first condition in \eqref{5.25}. Making use of factorizations \eqref{5.32}, 
\eqref{5.34} and setting
\begin{equation}
\Phi_s:=\rho_s+\psi_s\widetilde{\beta}_s-\widetilde{\alpha}_s\psi_s,
\label{5.36}
\end{equation}
we rewrite \eqref{5.35}
\begin{align}
f(z)=&P_{{\Lambda}_s,{\boldsymbol\ell}}(z)\cdot \rho_s\cdot
P_{{\Omega}_s,{\bf r}}(z)+P_{{\Lambda}_s,{\boldsymbol \ell}}(z)\cdot(z-\widetilde{\alpha}_s)
\cdot \psi_s\cdot P_{{\Omega}_s,{\bf r}}(z)\notag\\
=&P_{{\Lambda}_s,{\boldsymbol\ell}}(z)\cdot \Phi_s\cdot
P_{{\Omega}_s,{\bf r}}(z)+ P_{{\Lambda}_s,{\boldsymbol \ell}}(z)\cdot \psi_s\cdot 
(z-\widetilde{\beta}_s)\cdot P_{{\Omega}_s,{\bf r}}(z)\notag\\
=&P_{{\Lambda}_s,{\boldsymbol\ell}}(z)\cdot \Phi_s\cdot
P_{{\Omega}_s,{\bf r}}(z)+ P_{{\Lambda}_s,{\boldsymbol \ell}}(z)\cdot \psi_s\cdot
P_{{\Omega},{\bf r}}(z).\label{5.37}
\end{align}
Since $P_{{\Omega},{\bf r}}^{\br}(\beta_s)=0$, we conclude that $f$ of the form \eqref{5.37} satisfies 
condition  $f^{\br}(\beta_s)=d_s$ if and only if 
\begin{equation}
\left(P_{{\Lambda}_s,{\boldsymbol\ell}}\cdot \Phi_s\cdot
P_{{\Omega}_s,{\bf r}}\right)^{\br}(\beta_s)=d_s.
\label{5.38}
\end{equation}
Since the conjugacy class $[\beta_s]=[\alpha_s]$ is disjoint with 
$\cZ_{\bf r}(P_{{\Lambda}_s,{\boldsymbol\ell}})$ and  $\cZ_{\bf r}(P_{{\Omega}_s,{\bf r}})$, 
it follows from \eqref{5.38} and \eqref{5.21} that $\Phi_s=0 \Leftrightarrow d_s=0 
\Leftrightarrow \gamma_s=0$.
If $d_s\neq 0$, then $\Phi_s$ is uniquely recovered from \eqref{5.38} as well as $\gamma_s$ is 
recovered from 
the first formula in \eqref{5.29}. We then conclude from \eqref{5.29} and \eqref{5.38} that 
$\Phi_s=\gamma_s$.
We summarize: $f$  satisfies conditions \eqref{5.24}--\eqref{5.26} and is of degree less than $n+m$  
if and only if it is of the form \eqref{5.35} where $\psi_s$ satisfies \eqref{5.36}, i.e. (since 
$\Phi_s=\gamma_s$), if and only if $\psi_s$ is a solution to the Sylvester equation  
\begin{equation}
\widetilde{\alpha}_s\psi_s-\psi_s\widetilde{\beta}_s=\rho_s-\gamma_s.
\label{5.39}
\end{equation}
The latter equation is consistent, by Lemma \ref{L:6.2} and due to equality \eqref{5.23}.
By Lemma \ref{L:6.2}, all solutions $\psi_s$ to the equation \eqref{5.39} are given by the 
formula 
$$
\psi_s=(2{\rm Im}(\widetilde{\alpha}_s))^{-1}(\rho_s-\gamma_s)+\mu_s,\qquad \mu_s\in
\Pi_{\widetilde{\alpha}_s,\widetilde{\beta}_s}
$$
which being substituted into \eqref{5.35}, gives \eqref{5.26} completing the proof.
\end{proof}
\begin{remark}
{\rm The second term on the right side of \eqref{5.27} looks asymmetric with respect to 
the sets $\Lambda$ and $\Omega$. However, since the membership
$\mu_s\in\Pi_{\widetilde{\alpha}_s,\widetilde{\beta}_s}$ means that
$\widetilde{\alpha}_s\mu_s=\mu_s\widetilde{\beta}_s$ (by Lemma \ref{L:6.2}), 
we have 
$\; 
(z-\widetilde{\alpha}_s)\cdot \mu_s=\mu_s\cdot (z-\widetilde{\beta}_s), \;$
and the alternative representation 
$$
P_{\Lambda,{\boldsymbol \ell}}\cdot \mu_s\cdot P_{{\Omega}_s,{\bf r}}
=P_{\Lambda_s,{\boldsymbol \ell}}\cdot(z-\widetilde{\alpha}_s)\cdot \mu_s \cdot P_{{\Omega}_s,{\bf r}}
=P_{\Lambda_s,{\boldsymbol \ell}}\cdot \mu_s \cdot(z-\widetilde{\beta}_s)\cdot P_{{\Omega}_s,{\bf r}}=
P_{\Lambda_s,{\boldsymbol \ell}}\cdot \mu_s \cdot P_{{\Omega},{\bf r}}
$$
follows from \eqref{5.32} and \eqref{5.36}}.
\label{R:5.6a}
\end{remark}
The reason for non-uniqueness of a low-degree solution to the problem 
\eqref{5.24}--\eqref{5.26} is that the value of $(L_{\alpha_s}f)^{\br}(\beta_s)$ is not fixed.
\begin{remark}
For each $q_s\in\bH$ satisfynig the Sylvester equality
\begin{equation}
\alpha_sq-q\beta_s=c_s-d_s,
\label{5.40a}
\end{equation}
there exists a unique polynomial $f$ of the form \eqref{5.27} such that  $(L_{\alpha_s}f)^{\br}(\beta_s)=q_s$.
\label{R:5.7} 
\end{remark}
\begin{proof}
For any  $\mu_s\in \Pi_{\widetilde{\alpha}_s,\widetilde{\beta}_s}$ and  $f$ of the form \eqref{5.27}, we have
\begin{equation}
(L_{\alpha_s}f)^{\br}(\beta_s)=(L_{\alpha_s}\widetilde{f}_s)^{\br}(\beta_s)+   
((L_{\alpha_s}P_{\Lambda,{\boldsymbol \ell}})\cdot\mu_s\cdot P_{{\Omega}_s,{\bf
r}})^{\br}(\beta_s),
\label{5.40b}
\end{equation}
and the quaternion $q_s:=(L_{\alpha_s}f)^{\br}(\beta_s)$ satisfies equality \eqref{5.40a},
by Remark \ref{R:4.4a}. On the other hand, for any $\widetilde{q}_s\in\bH$, the equation 
$$
((L_{\alpha_s}P_{\Lambda,{\boldsymbol \ell}})\cdot\mu_s\cdot P_{{\Omega}_s,{\bf
r}})^{\br}(\beta_s)=\widetilde{q}_s
$$
can be solved for $\mu_s$ (using the implication $\; \Rightarrow\;$ in \eqref{5.12}) as follows:
\begin{equation}
\mu_s=(L_{\alpha_s}P_{\Lambda,{\boldsymbol\ell}})^{\sharp\br}(\widetilde{q}_s\beta_s 
\widetilde{q}_s^{-1})\cdot d_s\cdot
(P_{\Lambda_s,{\boldsymbol\ell}}^{\sharp}P_{\Lambda,{\boldsymbol\ell}_s})(\beta_s)^{-1}\cdot
P_{{\Omega}_s,{\bf r}}^{\br}(\beta_s)^{-1}, \; \; \mbox{if} \; \; \widetilde{q}_s\neq 0,
\label{5.41}
\end{equation}
and $\mu_s=0$ if $\widetilde{q}_s=0$; in fact, we should have used $L_{\alpha_s}P_{\Lambda,{\boldsymbol\ell}}$
rather than $P_{\Lambda,{\boldsymbol\ell}_s}$ in the latter formula but, although these polynomials are distinct in general, 
it follows from identity \eqref{5.32a} and the third equality in \eqref{2.3} that 
$(L_{\alpha_s}P_{\Lambda,{\boldsymbol\ell}})^\sharp 
(L_{\alpha_s}P_{\Lambda,{\boldsymbol\ell}})=P_{\Lambda,{\boldsymbol\ell}_s}^\sharp P_{\Lambda,{\boldsymbol\ell}_s}$.
We now conclude from \eqref{5.40b} that $f$ of the form \eqref{5.27} satisfies equality 
$(L_{\alpha_s}f)^{\br}(\beta_s)=q_s$ if and only if the parameter $\mu_s$ is defined by formula
\eqref{5.41} with $\widetilde{q}_s=q_s-(L_{\alpha_s}\widetilde{f}_s)^{\br}(\beta_s)$.
\end{proof}
The next theorem is the main result of the paper.
\begin{theorem}
Under assumptions \eqref{5.1} and \eqref{5.2}, let
$\widetilde{\alpha}_s$ and $\widetilde{\beta}_s$ be defined as in \eqref{5.9}, 
let $\Pi_{\widetilde{\alpha}_s,\widetilde{\beta}_s}$ be the plane defined via formula 
\eqref{4.9} and let polynomials $\widetilde{f}_{{\bf r},j}$ ($k<j\le m$), 
$\widetilde{f}_{{\boldsymbol\ell},i}$ ($k<i\le n$) and $\widetilde{f}_s$ ($1\le s\le k$)
be defined via formulas \eqref{5.15}, \eqref{5.19} and \eqref{5.28}, respectively.
Then all polynomials $f\in\bH[z]$ satisfying conditions \eqref{1.8}, \eqref{1.9}
are given by the formula
\begin{align}
f(z)=&\sum_{i=k+1}^n\widetilde{f}_{{\boldsymbol\ell},i}(z)+
\sum_{j=k+1}^m\widetilde{f}_{{\bf r},j}(z)+\sum_{s=1}^k\widetilde{f}_{s}(z)
+\sum_{s=1}^k P_{\Lambda_s,{\boldsymbol \ell}}(z)\cdot \mu_s\cdot P_{{\Omega},{\bf r}}(z)\notag\\
&+P_{\Lambda,{\boldsymbol \ell}}(z)\cdot h(z)\cdot P_{{\Omega},{\bf r}}(z),
\label{5.40}
\end{align}
where $\mu_s\in \Pi_{\widetilde{\alpha}_s,\widetilde{\beta}_s}$ and $h\in\bH[z]$ are free 
parameters. 
\label{T:5.7}
\end{theorem}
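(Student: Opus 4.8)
The plan is to exhibit one particular low-degree solution $\widetilde f$ as a sum of the ``elementary'' polynomials built in Section~5.3, and then to identify the full solution set as the coset $\widetilde f+\big({\mathbb I}_{\Lambda,\boldsymbol\ell}\cap{\mathbb I}_{\Omega,\bf r}\big)$ of the homogeneous solution set, describing that intersection by stripping off --- one equivalent pair at a time --- the Sylvester defects $(L_{\alpha_s}\cdot)^{\br}(\beta_s)$ which obstruct membership in the principal bi-ideal $P_{\Lambda,{\boldsymbol\ell}}\cdot\bH[z]\cdot P_{{\Omega},{\bf r}}$.

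First I would put $\widetilde f:=\sum_{i=k+1}^n\widetilde{f}_{{\boldsymbol\ell},i}+\sum_{j=k+1}^m\widetilde{f}_{{\bf r},j}+\sum_{s=1}^k\widetilde{f}_s$ and verify that it solves \eqref{1.8},~\eqref{1.9}. Since $f\mapsto f^{\bl}(\alpha)$ and $f\mapsto f^{\br}(\beta)$ are additive on $\bH[z]$, one evaluates termwise: by Lemmas~\ref{L:5.4}, \ref{L:5.5}, \ref{L:5.6}, at each interpolation node $\alpha_i$ (resp.\ $\beta_j$) exactly one summand carries the prescribed value $c_i$ (resp.\ $d_j$) while every other summand left-vanishes at $\alpha_i$ (resp.\ right-vanishes at $\beta_j$). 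This simultaneously settles the sufficiency half of Theorem~\ref{T:4.7}. Because the evaluation map $\Phi\colon f\mapsto\big((f^{\bl}(\alpha_i))_i,(f^{\br}(\beta_j))_j\big)$ is additive, $f$ solves the problem if and only if $f-\widetilde f$ lies in $\ker\Phi={\mathbb I}_{\Lambda,\boldsymbol\ell}\cap{\mathbb I}_{\Omega,\bf r}$ (see \eqref{5.4},~\eqref{5.5}). Hence everything reduces to showing that ${\mathbb I}_{\Lambda,\boldsymbol\ell}\cap{\mathbb I}_{\Omega,\bf r}$ coincides with the set $\mathcal S$ of polynomials $\sum_{s=1}^kP_{{\Lambda}_s,{\boldsymbol\ell}}\mu_sP_{{\Omega},{\bf r}}+P_{\Lambda,{\boldsymbol\ell}}hP_{{\Omega},{\bf r}}$ with $\mu_s\in\Pi_{\widetilde{\alpha}_s,\widetilde{\beta}_s}$ and $h\in\bH[z]$.

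The inclusion $\mathcal S\subseteq{\mathbb I}_{\Lambda,\boldsymbol\ell}\cap{\mathbb I}_{\Omega,\bf r}$ is immediate after rewriting each $\mu_s$-term in the alternative form $P_{\Lambda,{\boldsymbol\ell}}\mu_sP_{{\Omega}_s,{\bf r}}$ of Remark~\ref{R:5.6a}: every summand is then visibly a left multiple of $P_{\Lambda,{\boldsymbol\ell}}$ and a right multiple of $P_{{\Omega},{\bf r}}$. For the reverse inclusion, take $g\in{\mathbb I}_{\Lambda,\boldsymbol\ell}\cap{\mathbb I}_{\Omega,\bf r}$ and set $q_s:=(L_{\alpha_s}g)^{\br}(\beta_s)$ for $1\le s\le k$; by Remark~\ref{R:4.4a} each $q_s$ solves the homogeneous Sylvester equation $\alpha_s q-q\beta_s=0$. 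The crux is to cancel these defects by a suitable choice of the $\mu_s$. Using the form $P_{\Lambda,{\boldsymbol\ell}}\mu_sP_{{\Omega}_s,{\bf r}}$ and the product rule behind \eqref{5.7} (whose derivation only uses $P_{\Lambda,{\boldsymbol\ell}}^{\bl}(\alpha_j)=0$ and so applies with $P_{{\Omega}_s,{\bf r}}$ in place of $P_{{\Omega},{\bf r}}$), one checks that $\big(L_{\alpha_j}(P_{\Lambda,{\boldsymbol\ell}}\mu_sP_{{\Omega}_s,{\bf r}})\big)^{\br}(\beta_j)$ vanishes for $j\neq s$ (because $P_{{\Omega}_s,{\bf r}}^{\br}(\beta_j)=0$, by \eqref{2.7}) and, using $\widetilde{\alpha}_s\mu_s=\mu_s\widetilde{\beta}_s$ and the definition \eqref{5.9} of $\widetilde{\beta}_s$, equals $(L_{\alpha_s}P_{\Lambda,{\boldsymbol\ell}})^{\br}(\widetilde{\alpha}_s)\cdot\mu_s\cdot P_{{\Omega}_s,{\bf r}}^{\br}(\beta_s)$ for $j=s$. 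As both scalar factors are nonzero, this is an injective $\R$-linear map of the two-dimensional plane $\Pi_{\widetilde{\alpha}_s,\widetilde{\beta}_s}$ into the two-dimensional solution space of $\alpha_s q=q\beta_s$ (the image lies there, again by Remark~\ref{R:4.4a}; cf.\ Lemma~\ref{L:6.2}), hence it is onto --- this is exactly the content of Remark~\ref{R:5.7}. Choosing the $\mu_s$ so that $\big(L_{\alpha_j}\sum_sP_{{\Lambda}_s,{\boldsymbol\ell}}\mu_sP_{{\Omega},{\bf r}}\big)^{\br}(\beta_j)=q_j$ for $j=1,\dots,k$, the polynomial $g':=g-\sum_sP_{{\Lambda}_s,{\boldsymbol\ell}}\mu_sP_{{\Omega},{\bf r}}$ still lies in ${\mathbb I}_{\Lambda,\boldsymbol\ell}\cap{\mathbb I}_{\Omega,\bf r}$ and now has all defects $(L_{\alpha_s}g')^{\br}(\beta_s)=0$; Theorem~\ref{T:5.1} then gives $g'=P_{\Lambda,{\boldsymbol\ell}}hP_{{\Omega},{\bf r}}$ for some $h\in\bH[z]$, so $g\in\mathcal S$. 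Adding $\widetilde f$ back yields precisely \eqref{5.40}.

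The step I expect to be the main obstacle is this matching in the reverse inclusion: that the $k$ parameters $\mu_s$, each confined to its plane $\Pi_{\widetilde{\alpha}_s,\widetilde{\beta}_s}$, realize exactly the admissible $k$-tuples of Sylvester defects. This is where the ``diagonal'' structure of the defect map (only the $j=s$ term survives), the conjugation identities built into the definition \eqref{5.9} of $\widetilde{\alpha}_s,\widetilde{\beta}_s$, and the dimension count identifying $\Pi_{\widetilde{\alpha}_s,\widetilde{\beta}_s}$ with the homogeneous Sylvester solution space all have to be combined; everything else --- the verification that $\widetilde f$ interpolates, the easy inclusion, and the closing application of Theorem~\ref{T:5.1} --- is essentially bookkeeping.
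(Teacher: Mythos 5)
Your proposal is correct and follows essentially the same route as the paper: the particular solution is the sum of the elementary polynomials of Lemmas \ref{L:5.4}--\ref{L:5.6}, the remaining freedom is absorbed by matching the Sylvester defects $(L_{\alpha_s}f)^{\br}(\beta_s)$ through the $\mu_s$-terms, and the argument closes with Theorem \ref{T:5.1}. The only cosmetic differences are that you justify the surjectivity of the defect map $\mu_s\mapsto \bigl(L_{\alpha_s}(P_{\Lambda,{\boldsymbol\ell}}\cdot\mu_s\cdot P_{{\Omega}_s,{\bf r}})\bigr)^{\br}(\beta_s)$ on the plane $\Pi_{\widetilde{\alpha}_s,\widetilde{\beta}_s}$ by a dimension count rather than by the explicit inversion formula of Remark \ref{R:5.7}, and that you describe the homogeneous solution set first (in effect proving Theorem \ref{T:5.2} directly) instead of subtracting the particular and homogeneous pieces in one step as the paper does.
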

\begin{proof}
The ``if" part follows immediately from Theorem \ref{T:5.1}, and Lemmas \ref{L:5.4}, \ref{L:5.5}, \ref{L:5.6}.
Now let $f$ be any polynomial satisfying conditions \eqref{1.8}, \eqref{1.9}. For each $s\in\{1,\ldots,k\}$,
define $\mu_s$ a unique element $\mu_s\in\bH$ (as explained in Remark \ref{R:5.7}) such that 
$$
\left(L_{\alpha_s}(P_{\Lambda,{\boldsymbol \ell}}\cdot \mu_s\cdot P_{{\Omega}_s,{\bf r}})\right)^{\br}(\beta_s)=
\left(L_{\alpha_s}f\right)^{\br}(\beta_s)-(L_{\alpha_s}\widetilde{f}_{s})^{\br}(\beta_s).
$$
Then the polynomial 
$\; f-{\displaystyle\sum_{i=k+1}^n\widetilde{f}_{{\boldsymbol\ell},i}-
\sum_{j=k+1}^m\widetilde{f}_{{\bf r},j}-\sum_{s=1}^k\widetilde{f}_{s}-
\sum_{s=1}^k P_{\Lambda_s,{\boldsymbol \ell}}\cdot \mu_s\cdot P_{{\Omega},{\bf r}}}\; $
solves the homogeneous problem \eqref{5.3}, \eqref{5.6} and hence, belongs to $P_{\Lambda,{\boldsymbol 
\ell}}\cdot\bH[z]\cdot  P_{{\Omega}_s,{\bf r}}$ from which representation \eqref{5.40} follows.
\end{proof}
Combining Theorem \ref{T:5.7} and Remark \ref{R:5.7} one can get the non-homogeneous version of Theorem \ref{T:5.1}.
\begin{theorem}
Under assumptions \eqref{5.1} and \eqref{5.2}, let $q_s$ be a solution to the Sylvester equation \eqref{5.40a} for 
$s=1,\ldots,k$. The set of all $f\in\bH[z]$ satisfying conditions \eqref{1.8}, \eqref{1.9} and 
$$
(L_{\alpha_s} f)^{\br}(\beta_s)=q_s\quad\mbox{for}\quad s=1,\ldots,k
$$
is parametrized by formula \eqref{5.40} with free parameter $h\in\bH[z]$ and the elements $\mu_s$ uniquely
determined by $q_s$ (as explained in Remark \ref{R:5.7}).
\label{T:5.7a}   
\end{theorem}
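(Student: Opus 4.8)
The plan is to read the statement off from Theorem \ref{T:5.7} and Remark \ref{R:5.7}; the only substantive work is a short linearity computation showing that imposing the conditions $(L_{\alpha_s}f)^{\br}(\beta_s)=q_s$ pins down exactly the parameters $\mu_1,\dots,\mu_k$ in the parametrization \eqref{5.40}, while leaving $h$ unconstrained.

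By Theorem \ref{T:5.7}, the polynomials $f\in\bH[z]$ satisfying \eqref{1.8}, \eqref{1.9} are precisely those of the form \eqref{5.40} with $\mu_s\in\Pi_{\widetilde{\alpha}_s,\widetilde{\beta}_s}$ ($1\le s\le k$) and $h\in\bH[z]$ free. Fix such an $f$ and an index $s\in\{1,\dots,k\}$; I would compute $(L_{\alpha_s}f)^{\br}(\beta_s)$ by expanding \eqref{5.40} and using linearity of $L_{\alpha_s}$ and of right evaluation. The claim is that every summand of \eqref{5.40} other than $\widetilde{f}_s$ and $P_{\Lambda_s,\boldsymbol\ell}\cdot\mu_s\cdot P_{\Omega,\bf r}$ contributes $0$. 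Indeed, by the index conventions \eqref{5.1} one has $\alpha_s\in\Lambda_i$ for $i>k$, and $\alpha_s\in\Lambda_t$, $\beta_s\in\Omega_t$ for $t\ne s$, and $\beta_s\in\Omega_j$ for $j>k$; so each of the terms $\widetilde{f}_{\boldsymbol\ell,i}$ ($i>k$), $\widetilde{f}_{\bf r,j}$ ($j>k$), $\widetilde{f}_t$ ($t\ne s$), $P_{\Lambda_t,\boldsymbol\ell}\cdot\mu_t\cdot P_{\Omega,\bf r}$ ($t\ne s$), and the tail $P_{\Lambda,\boldsymbol\ell}\cdot h\cdot P_{\Omega,\bf r}$ may be written — using, for $\widetilde{f}_t$ and the $\mu_t$-term, the factorizations \eqref{5.32}, \eqref{5.34} and Remark \ref{R:5.6a} where convenient — as $g\cdot P$ with $g^{\bl}(\alpha_s)=0$ and $P^{\br}(\beta_s)=0$; by the reasoning behind \eqref{5.7} one then has $L_{\alpha_s}(g\cdot P)=(L_{\alpha_s}g)\cdot P$, and $((L_{\alpha_s}g)\cdot P)^{\br}(\beta_s)=0$ by \eqref{2.7}. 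Rewriting $P_{\Lambda_s,\boldsymbol\ell}\cdot\mu_s\cdot P_{\Omega,\bf r}=P_{\Lambda,\boldsymbol\ell}\cdot\mu_s\cdot P_{\Omega_s,\bf r}$ (Remark \ref{R:5.6a}) and using $P_{\Lambda,\boldsymbol\ell}^{\bl}(\alpha_s)=0$, one is left exactly with
$$(L_{\alpha_s}f)^{\br}(\beta_s)=(L_{\alpha_s}\widetilde{f}_s)^{\br}(\beta_s)+\bigl((L_{\alpha_s}P_{\Lambda,\boldsymbol\ell})\cdot\mu_s\cdot P_{\Omega_s,\bf r}\bigr)^{\br}(\beta_s),$$
which is formula \eqref{5.40b}; in particular the left-hand side depends only on $\mu_s$, not on the other $\mu_t$ nor on $h$.

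Next I would invoke Remark \ref{R:5.7}: for each $s$ the assignment $\mu_s\mapsto(L_{\alpha_s}f)^{\br}(\beta_s)$ is a bijection from $\Pi_{\widetilde{\alpha}_s,\widetilde{\beta}_s}$ onto the (two-dimensional affine) solution set of the Sylvester equation \eqref{5.40a}, with inverse given explicitly by \eqref{5.41} applied to $\widetilde{q}_s=q_s-(L_{\alpha_s}\widetilde{f}_s)^{\br}(\beta_s)$. Therefore, given solutions $q_1,\dots,q_k$ of \eqref{5.40a}, an $f$ of the form \eqref{5.40} satisfies \eqref{1.8}, \eqref{1.9} together with $(L_{\alpha_s}f)^{\br}(\beta_s)=q_s$ ($s=1,\dots,k$) if and only if each $\mu_s$ equals the unique element of $\Pi_{\widetilde{\alpha}_s,\widetilde{\beta}_s}$ determined by $q_s$ in this way; substituting these values into \eqref{5.40} leaves $h\in\bH[z]$ as the sole free parameter, which is the asserted parametrization.

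I expect the only delicate point to be the vanishing of the cross terms in the middle step: for each kind of summand in \eqref{5.40} one must choose the factorization — via Remark \ref{R:5.6a} and the identities \eqref{5.32}, \eqref{5.34} where needed — that exhibits a left factor left-vanishing at $\alpha_s$ together with a right factor right-vanishing at $\beta_s$, and the precise index ranges in \eqref{5.1} are exactly what guarantee this is possible for every summand except $\widetilde{f}_s$ and the $\mu_s$-term. Once this bookkeeping is done, the theorem follows at once from Theorem \ref{T:5.7} and Remark \ref{R:5.7}.
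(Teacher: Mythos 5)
Your proof is correct and takes exactly the route the paper indicates: the paper only remarks that Theorem~\ref{T:5.7a} follows by ``combining Theorem~\ref{T:5.7} and Remark~\ref{R:5.7},'' and your argument simply supplies the bookkeeping — using~\eqref{2.7}, the factored forms of the summands in~\eqref{5.40}, and Remark~\ref{R:5.6a} to show that only the $\widetilde f_s$- and $\mu_s$-terms survive in $(L_{\alpha_s}f)^{\br}(\beta_s)$, recovering~\eqref{5.40b} and then invoking Remark~\ref{R:5.7}. No gap; this is the intended proof, spelled out.
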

{\bf Proof of Theorem \ref{T:5.2}:} We let $c_i=d_j=0$ for all $i=1,\ldots,n$ and $j=1,\ldots,m$ and then conclude from 
formulas \eqref{5.14}, \eqref{5.18}, \eqref{5.21}, \eqref{5.22} that $\rho_i=\gamma_j=0$ for all $i=1,\ldots,n$ and 
$j=1,\ldots,m$. Then it follows from formulas \eqref{5.15}, \eqref{5.19}, \eqref{5.28} that all elementary polynomials 
$\widetilde{f}_{{\boldsymbol\ell},i}$, $\widetilde{f}_{{\bf r},j}$, $\widetilde{f}_{s}$ are zero polynomials.
Now description \eqref{5.10} in Theorem \ref{T:5.2} follows from \eqref{5.40}.\qed

\medskip

Another particular case of Theorem \ref{T:5.7} (where $k=0$) admits a fairly simple answer.
\begin{theorem}
Assume that $[\alpha_i]\cap \Omega=\emptyset \; (1\ge i\le n)$ and 
$[\beta_j]\cap \Lambda=\emptyset \; (1\le j\le m)$. 
Then all polynomials $f\in\bH[z]$ satisfying conditions \eqref{1.8}, \eqref{1.9}
are given by 
\begin{equation}
f(z)=\sum_{i=1}^n\widetilde{f}_{{\boldsymbol\ell},i}(z)+
\sum_{j=1}^m\widetilde{f}_{{\bf r},j}(z)
+P_{\Lambda,{\boldsymbol \ell}}(z)\cdot h(z)\cdot P_{{\Omega},{\bf r}}(z),\quad h\in\bH[z]
\label{5.47}
\end{equation}
where $\widetilde{f}_{{\bf r},j}$ and $\widetilde{f}_{{\boldsymbol\ell},i}$ are 
defined via formulas \eqref{5.15}, \eqref{5.19}. The two first terms on the right 
side of \eqref{5.47} present a unique polynomial of degree less than $m+n$ satisfying conditions 
\eqref{1.8}, \eqref{1.9}.
\label{T:5.9}
\end{theorem}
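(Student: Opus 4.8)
The plan is to deduce the theorem from Theorem~\ref{T:5.7} in the case $k=0$, adding an elementary degree count for the uniqueness assertion (which Theorem~\ref{T:5.7} does not record). The first step is to note that, under the standing assumption $({\bf A})$ of this section, the hypotheses $[\alpha_i]\cap\Omega=\emptyset$ and $[\beta_j]\cap\Lambda=\emptyset$ say exactly that no node of $\Lambda$ is equivalent to a node of $\Omega$; thus the arrangement \eqref{5.1} holds with $k=0$, so the consistency relations \eqref{5.2} and the auxiliary conditions \eqref{5.6} are both vacuous. In this regime Theorem~\ref{T:5.1} asserts that the homogeneous problem \eqref{5.3} has solution set precisely $P_{\Lambda,{\boldsymbol\ell}}\cdot\bH[z]\cdot P_{\Omega,{\bf r}}$.

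Next I would verify the ``$\supseteq$'' part of \eqref{5.47}. Put $\widetilde{f}=\sum_{i=1}^{n}\widetilde{f}_{{\boldsymbol\ell},i}+\sum_{j=1}^{m}\widetilde{f}_{{\bf r},j}$. With $k=0$ the index ranges in Lemmas~\ref{L:5.4} and \ref{L:5.5} become all of $\{1,\dots,m\}$ and $\{1,\dots,n\}$, so these lemmas produce polynomials $\widetilde{f}_{{\boldsymbol\ell},i}=P_{\Lambda_i,{\boldsymbol\ell}}\cdot\rho_i\cdot P_{\Omega,{\bf r}}$ with $\widetilde{f}_{{\boldsymbol\ell},i}^{\bl}(\alpha_i)=c_i$ and $\widetilde{f}_{{\boldsymbol\ell},i}^{\bl}(\alpha_{i'})=0$ for $i'\ne i$, and — being right multiples of $P_{\Omega,{\bf r}}$, which right-vanishes on $\Omega$, by \eqref{2.7} — with $\widetilde{f}_{{\boldsymbol\ell},i}^{\br}(\beta_j)=0$ for all $j$; symmetrically $\widetilde{f}_{{\bf r},j}^{\br}(\beta_j)=d_j$, $\widetilde{f}_{{\bf r},j}^{\br}(\beta_{j'})=0$ for $j'\ne j$, and $\widetilde{f}_{{\bf r},j}^{\bl}(\alpha_i)=0$ for all $i$. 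Since the left and right evaluation maps are additive, summing gives $\widetilde{f}^{\bl}(\alpha_i)=c_i$ and $\widetilde{f}^{\br}(\beta_j)=d_j$; adjoining a term $P_{\Lambda,{\boldsymbol\ell}}\cdot h\cdot P_{\Omega,{\bf r}}$ (a solution of the homogeneous problem) leaves these relations intact, so every $f$ of the form \eqref{5.47} is a solution.

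Conversely, if $f$ solves \eqref{1.8}, \eqref{1.9}, then $f-\widetilde{f}$ solves the homogeneous problem, so by Theorem~\ref{T:5.1} (with $k=0$) it equals $P_{\Lambda,{\boldsymbol\ell}}\cdot h\cdot P_{\Omega,{\bf r}}$ for some $h\in\bH[z]$, which is \eqref{5.47}. For uniqueness of the low-degree solution, I would use that Lemmas~\ref{L:5.4} and \ref{L:5.5} already bound each elementary polynomial below degree $m+n$, so $\deg\widetilde{f}<m+n$, together with $\deg P_{\Lambda,{\boldsymbol\ell}}=n$ and $\deg P_{\Omega,{\bf r}}=m$ (valid under $({\bf A})$ by \eqref{3.7}, \eqref{3.9}). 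Because degree is additive under multiplication in $\bH[z]$, every nonzero element of $P_{\Lambda,{\boldsymbol\ell}}\cdot\bH[z]\cdot P_{\Omega,{\bf r}}$ has degree at least $n+m$; hence a solution $f$ of degree $<m+n$ forces $h\equiv 0$ in \eqref{5.47}, i.e. $f=\widetilde{f}$.

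I do not anticipate a real obstacle: the theorem is a bookkeeping specialization of Theorems~\ref{T:5.7} and \ref{T:5.1}. The only points needing a little care are (i) that in the $k=0$ regime the ``left'' elementary polynomials are annihilated by all right interpolation conditions and conversely — immediate from the factored forms \eqref{5.15}, \eqref{5.19} and \eqref{2.7} — and (ii) the degree identities $\deg P_{\Lambda,{\boldsymbol\ell}}=n$, $\deg P_{\Omega,{\bf r}}=m$, which hold because $({\bf A})$ keeps every conjugacy class from meeting $\Lambda$ (or $\Omega$) in three or more points, so Theorem~\ref{T:3.7} gives degree equal to cardinality.
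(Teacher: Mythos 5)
Your proof is correct and follows essentially the same route as the paper, which presents Theorem \ref{T:5.9} precisely as the $k=0$ specialization of Theorem \ref{T:5.7} (so that the sums over $s=1,\dots,k$ in \eqref{5.40} vanish and the remaining index ranges become $1,\dots,n$ and $1,\dots,m$), with existence and the converse resting on Lemmas \ref{L:5.4}, \ref{L:5.5} and Theorem \ref{T:5.1}. Your added degree count for the uniqueness of the low-degree solution is the intended argument (it is the same additivity-of-degree reasoning used at the end of the proof of Lemma \ref{L:5.4}), and your direct verification that $\widetilde{f}_{{\boldsymbol\ell},i}^{\br}(\beta_j)=0$ from the factored form \eqref{5.19} and \eqref{2.7} correctly handles the evident misprint in \eqref{5.20}.
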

Specializing formula \eqref{5.47} further to the case where $\Omega=\emptyset$ and therefore,
$P_{\Omega,{\bf r}}\equiv 1$ and
$\rho_i=P^{\bl}_{{\Lambda}_i,{\boldsymbol\ell}}(\alpha_i)^{-1}$ (according to \eqref{5.18}),
recovers Theorem \ref{T:5.1b}.
Letting $\Lambda=\emptyset$ in Theorem \ref{T:5.9} and making appropriate adjustments we recover 
Theorem \ref{T:5.1a}.

\section{Alternative formulas for low-degree particular solutions}
\setcounter{equation}{0}   

As in the complex case, a low-degree solution (in case it is unique) can be constructed 
via several different schemes. Although the produced formulas are not as explicit in 
terms of interpolation data as the Lagrange's formula \eqref{1.2}, the algorithms might be
more efficient from the computational point of view. For the left-sided problem \eqref{1.8},
one can pick any basis $\{{\bf a}_1,\ldots,{\bf a}_n\}$ for the space of polynomials of 
degree less than $n$ and find
$\widetilde{f}_{\boldsymbol\ell}$ in the form 
$\widetilde{f}_{\boldsymbol\ell}(z)=\sum {\bf a}_j(z)\varphi_j$ with the
coefficients $\varphi_i$ obtained
from the linear system
\begin{equation}
\sum_{j=1}^n {\bf a}^{\bl}_j(\alpha_i)\varphi_j=c_i\quad\mbox{for}\quad i=1,\ldots,n. 
\label{6.1}
\end{equation}
Theorem \ref{T:5.1} implies in particular, that the latter system has a unique  
solution for any choice of $c_1,\ldots,c_n$. This, in turn, implies that the matrix 
$A$ of the system \eqref{6.1} is 
invertible and therefore, the left Lagrange polynomial can be written as 
\begin{equation}
\widetilde{f}_{\boldsymbol\ell}(z)=\begin{bmatrix}{\bf a}_1(z)
& \ldots & {\bf a}_n(z)\end{bmatrix}A^{-1}C,\quad
A=\left[{\bf a}^{\bl}_j(\alpha_i)\right]_{i,j=1}^n,\quad C=\begin{bmatrix}c_1 \\
\vdots \\ c_{n}\end{bmatrix}.
\label{6.2}
\end{equation}
We mention three ``canonical" bases. If we let ${\bf a}_j=P_{\Lambda_j,{\boldsymbol\ell}}$ 
to be the {\bf lmp}
of the set ${\Lambda}_j:=\Lambda\setminus\{\alpha_j\}$, the matrix $A$ is  
diagonal and the formula \eqref{6.2} amounts to the Lagrange formula
\eqref{3.8}. The monomial basis ${\bf a}_j(z)=z^{j-1}$ ($j=1,\ldots,n$) leads to the 
Vandermonde matrix $A=\left[\alpha_i^{j-1}\right]_{i,j=1}^n$. 
The third convenient basis ${\bf a}_j=p_{j-1}$ ($j=1,\ldots,n$)
is suggested by the recursion \eqref{2.14}.
Recall that $p_0\equiv 1$ and $p_j(z)$ is the {\bf lmp} for the set 
$\{\alpha_1,\ldots,\alpha_j\}$. Therefore, the matrix $A$ is low triangular:
$$
A=\left[p^{\bl}_{k-1}(\alpha_i)\right]_{i,k=1}^n=
\begin{bmatrix}1 & 0 & 0 &\ldots & 0 \\
1 & p^{\bl}_1(\alpha_2) & 0 & \ldots & 0\\
1& p^{\bl}_1(\alpha_3) & p^{\bl}_2(\alpha_3) & \ldots & 0
\\ \vdots &\vdots & \vdots &\ddots & \vdots \\
1& p^{\bl}_1(\alpha_n)& p^{\bl}_2(\alpha_n) & \ldots & p^{\bl}_{n-1}(\alpha_n)
\end{bmatrix},
$$
and the formula \eqref{6.2} now amounts to the Newton's interpolation formula
\begin{equation}
\widetilde{f}_{\boldsymbol\ell}(z)=\varphi_0+p_1(z)\varphi_1+\ldots 
+p_{n-1}(z)\varphi_{n-1},
\label{6.3}
\end{equation}
where the coefficients $\varphi_j$ are recursively recovered from the triangular system
\begin{equation}
\sum_{k=0}^j p_k^{\bl}(\alpha_{j+1})\varphi_{k}=c_j\qquad (j=0,\ldots,n-1). 
\label{6.4}
\end{equation}
Due to the triangular structure, the Newton's scheme easily incorporates additional
interpolation nodes: to get the formula for the Lagrange polynomial satisfying
the additional condition $f(\alpha_{n+1})=c_{n+1}$ (assuming that $\alpha_{n+1}$ is 
equivalent to at most one point from $\{\alpha_1,\ldots,\alpha_n\}$), it suffices to 
calculate  $p_n(z)$, to find $\varphi_{n}$ from \eqref{6.4} and then to add the  extra 
term $p_n(z)\varphi_{n}$
to the expression on the right side of \eqref{6.3}. To apply the Lagrange formula 
\eqref{3.8} in a similar situation, one need to recalculate all basis polynomials
$P_{{\Lambda}_i,{\boldsymbol\ell}}$ for $i=1,\ldots,n+1$.

\smallskip

If the basis polynomials are such that $\deg{\bf a}_j=j-1$ for $j=1,\ldots,n$, then 
the degree of the Lagrange polynomial $\widetilde{f}_{\boldsymbol\ell}$ can be determined 
from interpolation data as the minimal integer $n_0$ such that the column $C$ 
of target values \eqref{6.2} belongs to the right linear span of $n_0$ leftmost columns 
of the  matrix $A$. For example, for the monomial basis ${\bf a}_j=p_{j-1}$, we have
\begin{equation}
\deg(\widetilde{f}_{\boldsymbol\ell})=\min\left\{k: \begin{bmatrix}c_1 \\
\vdots \\ c_{n}\end{bmatrix}\in{\rm span}_{\bf r}\left\{
\begin{bmatrix}\alpha_1^{j-1}\\ \vdots \\ \alpha_n^{j-1}\end{bmatrix}, \; j=1,\ldots,k
\right\}\right\}.
\label{6.4a}
\end{equation}
Similar observations hold true for the right-sided sided problem \eqref{1.9} and can be 
applied to the two-sided problem as follows. Let us assume for the sake of simplicity 
that the low-degree solution is unique, i.e., that is none of the left nodes is 
equivalent to a right node. By Theorem \ref{T:5.1b}, the low degree solution 
must be of the form 
\begin{equation}
\widetilde{f}=\widetilde{f}_{\boldsymbol\ell}+P_{\Lambda,\boldsymbol\ell}h
\label{6.5}
\end{equation}
for some  $h\in\bH[z]$ of degree less than $m$. Formula \eqref{6.5} guarantees that 
$\widetilde{f}$ satisfies left conditions \eqref{1.8}. Let us introduce the elements
$$
\widetilde{d}_j:=d_j-\widetilde{f}_{\boldsymbol \ell}^{\br}(\beta_j)\quad\mbox{for}\quad
j=1,\ldots,m.
$$
Using the equivalence \eqref{5.12}, it can be shown that $\widetilde{f}$ of the 
form \eqref{6.5} satisfies conditions \eqref{1.9} if and only if the parameter $h$ is 
subject to right conditions 
\begin{equation}
h^{\br}(\beta_j)=\delta_j:=\left\{\begin{array}{ccc}
P_{\Lambda,{\boldsymbol\ell}}^{\sharp\br}(\widetilde{d}_j\beta_j\widetilde{d}_j^{-1})\cdot
\widetilde{d}_j\cdot(P_{\Lambda,{\boldsymbol\ell}}^\sharp
P_{\Lambda,{\boldsymbol\ell}})(\beta_j)^{-1},
&\mbox{if}& \widetilde{d}_j\neq 0,\\
0&\mbox{if}& \widetilde{d}_j=0,\end{array}\right.
\label{6.6}
\end{equation}
for $j=1,\ldots,m$. The unique $h$ with $\deg(h)<m$ and satisfying 
conditions \eqref{6.6} can be 
written as
\begin{equation}
h(z)=DB^{-1}\begin{bmatrix}{\bf b}_1(z) \\
\vdots \\ {\bf b}_{m}(z)\end{bmatrix},\quad 
B=\left[{\bf b}^{\br}_i(\beta_j)\right]_{i,j=1}^n,\quad D=\begin{bmatrix}\delta_1 
&\ldots&  \delta_{m}\end{bmatrix}
\label{6.7}
\end{equation}
for a fixed basis $\{{\bf b}_1,\ldots,{\bf b}_m\}$ of the space of polynomials of
degree less than $m$. To construct $h$, we may use the monomial basis and the Vandermonde 
matrix $B=\left[\beta_j^{i-1}\right]_{i,j=1}^m$. Alternatively, we 
may use the right version of the Newton's scheme with the upper triangular matrix 
$B=\left[q^{\br}_{i-1}(\beta_k)\right]_{i,k=1}^n$ where the polynomials $q_j$ 
are constructed recursively in \eqref{2.13}. Similarly to \eqref{6.4a}, we have
\begin{equation}
\deg(h)=\min\left\{s: \begin{bmatrix}\delta_1 &
\cdots& \delta_{m}\end{bmatrix}\in{\rm span}_{\boldsymbol \ell}\left\{
\begin{bmatrix}\beta_1^{i-1}& \cdots& \beta_m^{i-1}\end{bmatrix}, \; i=1,\ldots,s
\right\}\right\}.
\label{6.8}
\end{equation}
Substituting \eqref{6.7} into \eqref{6.5}, we get the unique low degree solution 
$\widetilde{f}$ to the problem \eqref{1.8}, \eqref{1.9}. As we have shown,
$\widetilde{f}$ can be constructed by applying the Newton's scheme first to the left 
problem and then to the recalculated right problem. It would be interesting to 
construct a ``direct" triangular algorithm avoiding the recalculating step.

\smallskip

The value of $\deg(\widetilde{f})$ in terms of interpolation data can be derived from 
\eqref{6.4a}, \eqref{6.5} and \eqref{6.8}. Indeed, 
$\deg(\widetilde{f})=\deg(\widetilde{f}_{\boldsymbol\ell})$ if $\widetilde{d}_j=0$ for 
all
$j=1,\ldots,m$ (i.e., when it happens that $\widetilde{f}_{\boldsymbol\ell}$ accidently
satisfies all right conditions \eqref{1.9}). Otherwise, 
$\deg(\widetilde{f})=\deg (P_{\Lambda,\boldsymbol\ell})+\deg(h)=n+\deg(h)$ where 
the integer $\deg(h)$ is given in \eqref{6.8}. Again, it would be interesting to express 
$\deg(\widetilde{f})$ in terms of the original data rather than the elements 
$\delta_j$'s. A more interesting question is to find minimal degree solutions (or at 
least the value of this minimal degree) in the general setting of Theorem \ref{T:5.9}.
At the moment, it is not clear how small the degree of the polynomial 
in the top line of \eqref{5.40} can be done by varying $\mu_1,\ldots,\mu_k$ in the 
corresponding real planes of $\bH$.

\bibliographystyle{amsplain}

\end{document}